\newenvironment{myabstract}{\par\noindent
{\bf Abstract . } \small }
{\par\vskip8pt minus3pt\rm}
\newcounter{item}[section]
\newcounter{kirshr}
\newcounter{kirsha}
\newcounter{kirshb}
\newenvironment{enumroman}{\setcounter{kirshr}{1}
\begin{list}{(\roman{kirshr})}{\usecounter{kirshr}} }{\end{list}}
\newenvironment{enumarab}{\setcounter{kirshb}{1}
\begin{list}{(\arabic{kirshb})}{\usecounter{kirshb}} }{\end{list}}
\newenvironment{athm}[1]{\vskip3mm\par\noindent
{\bf #1 }. \slshape }
{\upshape\par\vskip10pt minus3pt}
\newtheorem{theorem}{Theorem}[section]
\newtheorem{lemma}[theorem]{Lemma}
\newtheorem{corollary}[theorem]{Corollary}
\newenvironment{demo}[1]{\noindent{\bf #1.}\upshape\mdseries}
{\nopagebreak{\hfill\rule{2mm}{2mm}\nopagebreak}\par\normalfont}
\theoremstyle{definition}
\newtheorem{example}[theorem]{Example}
\newtheorem{definition}[theorem]{Definition}
\newtheorem{question}[theorem]{question}
\def\R{\mathbb{R}}
\def\Q{\mathbb{Q}}
\def\C{{\mathfrak{C}}}
\def\Fm{{\mathfrak{Fm}}}
\def\Fr{{\mathfrak{Fr}}}
\def\Sg{{\mathfrak{Sg}}}
\def\Fm{{\mathfrak{Fm}}}
\def\A{{\mathfrak{A}}}
\def\B{{\mathfrak{B}}}
\def\Bl{{\mathfrak{Bl}}}
\def\C{{\mathfrak{C}}}
\def\M{{\mathfrak{M}}}
\def\N{{\mathfrak{N}}}
\def\Rd{{\ Rd}}
\def\(R)RA{{\bf (R)RA}}
\def\R{\mathbb{R}}
\def\Q{\mathbb{Q}}
\def\B{{\sf B}}
\def\G{{\sf G}}
\def\tp{{\sf tp}}
 \def\Cm{{\mathfrak{Cm}}}
\def\F{{\mathfrak F}}
\def\At{{\sf At}}
\def\Rl{{\mathfrak Rl}}
\def\Tm{{\mathfrak{Tm}}}
\def\A{{\mathfrak{A}}}
\def\B{{\mathfrak{B}}}
\def\C{{\mathfrak{C}}}
\def\A{{\mathfrak{A}}}
\def\B{{\mathfrak{B}}}
\def\C{{\mathfrak{C}}}
\def\Uf{{\mathfrak{Uf}}}
\def\Cm{{\mathfrak Cm}}
\def\F{{\mathfrak F}}
\def\H{{\mathfrak H}}
\def\G{{\mathfrak G}}
\def\L{{\mathfrak{L}}}
\def\Ig{{\mathfrak Ig}}
\def\Str{{\mathfrak Str}}
\def\L{{\mathfrak{L}}}
\def\G{{\bf G}}
\def\Diag{{\bf Diag}}
\def\CM{{\bf CM}}
\title{On complete representability of Pinter's algebras and related structures}
\author{Tarek Sayed Ahmed}
\begin{document}
\maketitle

\begin{myabstract} We answer an implicit question of Ian Hodkinson's. 
We show that atomic Pinters algebras may not be completely representable, however 
the class of completely representable Pinters algebras is elementary and finitely axiomatizable. We obtain 
analagous results for infinite dimensions (replacing finite axiomatizability by finite schema axiomatizability). 
We show that the class of subdirect products of set algebras is a canonical variety that is locally finite only for finite dimensions, 
and has the superamalgamation property; the latter for all dimensions.
However, the algebras we deal with are expansions of Pinter algebras 
with substitutions corresponding to tranpositions. It is true that this makes  the a lot of the problems addressed harder, but this is an acet, not a liability.
Futhermore,  the results for Pinter's algebras readily follow by just discarding the substitution operations corresponding to transpostions.
Finally,  we show that the multi-dimensional modal logic corresponding to finite dimensional algebras have an $NP$-complete satisfiability problem. 

\footnote{Mathematics Subject Classification. 03G15; 06E25

Key words: multimodal logic, substitution algebras, interpolation}
\end{myabstract}

\section {Introduction}

Suppose we have a class of algebras infront of us. The most pressing need is to try and classify it. Classifying is a kind of defining.
Most mathematical classification is by axioms, either first order, or even better equations. 
In algebraic logic the typical question is this. Given a class of concrete set algebras, that we know in advance is elementary or is a variety.
Furthermore, such algebras consist of sets of sequences 
(usually with the same length called the dimension) 
and the operations are set - theoretic, utilizing the form of elements as sets of sequences.
Is there a {\it simple}  elementary (equational) axiomatization of this class?
A harder problem is: Is their a {\it finite}  elementary (equational) axiomatization of this class?

The prime examples of such operations defined on the unit of the algebra, which is in the form of  
$^nU$ ($n\geq 2)$ are cylindrifiers and substitutions.
For $i<n$, and $t, s\in {}^nU$, define, the equivalence relation,  $s\equiv_i t$ if $s(j)=t(j)$ for all $j\neq i$. Now fix $i<n$ and $\tau\in {}^nn$, then 
these operations are defined as follows
$$c_iX=\{s\in {}^nU: \exists t\in X, t \equiv_i s\},$$
$$s_{\tau}X=\{s\in {}^nU: s\circ \tau\in X\}.$$
Both are unary operations on $\wp(^nU)$; the $c_i$ is called the $i$th cylindrfier, while the $s_{\tau}$ is called the substitution operation corresponding to 
te transformation $\tau$, or simply a substitution. 

For Boolean algebras this question is completely settled by Stone's representation theorem.
Every Boolean algebra is representable, equivalently, the class of Boolean algebras is 
finitely axiomatizable by a set of equations.
This is equivalent to the completeness of propositional logic.

When we approach the realm of first order logic things tend to become much more complicated. 
The standard algebraisation of first order logic is cylindric algebras (where cylindrifiers are the prominent citizens)
and polyadic algebras (where cylindrifiers and substitutions are the prominent citizens).
Such algebras, or rather the abstract version thereof,  are defined by a finite set of equations that aim to capture algebraically the properties of cylindrifiers and substitutions
(and diagonal elements if present in the signature).

Let us concentrate on polyadic algebras of dimension $n$; where $n$ is a finite ordinal.  A full set algebra is one whose unit is of the form $^nU$ 
and the non-Boolean operations are cylindrifiers and substitutions.
The class of representable algebras, defined as the class of subdirect products of full set algebras is a 
discriminator variety that is not finitely axiomatizable for $n\geq 3$, thus the set of equations postulated by Halmos is not complete.
Furthermore, when we also have diagonal elements, then there is an inevitable degree of complexity in any potential universal axiomatization.

There is another type of  representations for polyadic algebras, and that is {\it complete} representations. 
An algebra is completely representable if it has a representation that preserves arbitrary meets whenever they exist.
For Boolean algebras the completely representable  algebras are  easily characterized; 
they are simply the atomic ones; in particular, this class is elementary and finitey axiomatizable, one just adds the first order sentence
expressing atomicity.
For cylindric and polyadic algebras, again, this problem turns much more involved,
This class for $n\geq 3$ is not even elementary.

Strongly related to complete representations \cite{Tarek}, is the notion of omitting types for the corresponging multi-dimensional modal logic. 
Let $W$ be a class of algebras (usually a variety or at worst quasi-variety) with a Boolean reduct, having
the class $RW$ as the class of representable algebras, so that $RW\subseteq W$, and for $\B\in RW$, 
$\B$ has top element a set of sequences having the same 
length, say $n$ (in our case the dimension of the algebra), 
and the Boolean operations are interpreted as concrete intersections and complementation of 
$n$-ary relatons. 
We say the $\L_V$, the multi-dimensional modal logic has the omitting types theorem, if whenever $\A\in V$ is countable, 
and $(X_i: i\in \omega)$ is a family of non-principal types, meaning that $\prod X_i=0$ for each $i\in \omega$, then there is a $\B\in RW$ with unit $V$,
and an injective homomorphism 
$f:\A\to \wp(V)$ such that $\bigcap_{i\in \omega}f(X_i)=\emptyset$ for each $i\in \omega$.

In this paper we study, among other things, 
complete representability for cylindrifier free reducts of polyadic algebras, as well as omitting types for the corresponding multi-dimensional 
model logic.
We answer a question of Hodkinson \cite{atomic} p.  by showing that for various such reducts of polyadic algebras, 
atomic algebras might not be completely representable, however, 
they can be easily characterized by a finite simple set of first order formulas.


Let us describle our algebras in a somewhat more general setting.
Let $T$ be a submonoid of $^nn$ and $U$ be a non-empty set. 
A set $V\subseteq {}^nU$ is called $T$ closed, if whenever $s\in V$ and $\tau\in T$, 
then $s\circ \tau\in V$. (For example $T$ is $T$ closed).
If $V$ is $T$ closed then $\wp(V)$ denotes the set algebra $({\cal P}(V),\cap,\sim s_{\tau})_{\tau\in T}$.
$\wp(^nU)$ is called a full set algebra.

Let $GT$ be a set of generators of $T$.
One can obtain a variety $V_T$ of Boolean algebras with extra non-boolean operators $s_{\tau}$, $\tau\in GT$ 
by  translating a presentation of $T$, via the finite set of generators $GT$ to equations, and stipulating that the $s_{\tau}$'s 
are Boolean endomorphisms. 

It is known that every monoid  not necessarily finite, 
has a presentation. For finite monoids, the multiplicative table provides one. 
Encoding finite presentations in terms of a set of generators of $T$ 
into a finite set of equations $\Sigma$, enables one
to define for each $\tau\in T$, a substitution unary operation $s_{\tau}$  
and for any algebra $\A$, such that  $\A\models \Sigma$, $s_{\tau}$ is a Boolean endomorpsim of $\A$ and for $\sigma, \tau\in T$,
one has $s_{\sigma}^{\A}\circ s_{\tau}^{\A}(a)=s_{\sigma\circ \tau}^{\A}(a)$ for each $a\in A$.

The translation of presentations to equations, guarantee that
if $\A\models \Sigma$ and $a\in \A$ is non zero, then for any Boolean ultrafilter $F$ of $\A$ containing $a$,
the map $f:\A\to \wp(T)$ defined via 
$$x\mapsto \{\tau \in T: s_{\tau}x\in F\}$$ is a homomorphism 
such that $f(a)\neq 0$. 
Such a homomorphism determines a (finite) relativized representation meaning that the unit of the set algebra is possibly only a proper subset
of the square $^nn$. 

Let $RT_n$ be the class of subdirect products of {\it full} set algebras; those set algebras whose units are squares 
(possibly with infinite base). One can show that $\Sigma$ above axiomatizes the variety generated by $RT_n$, but it is not obvious
that $RK_n$ is closed under homomorphic images. 

Indeed, if $T$ is the monoid of all non-bijective maps, that $RTA_n$ is only a quasi-variety.
Such algebras are called Pinters algebras. Sagi \cite{sagiphd}  studied the representation problem for such algebras.
In his recent paper \cite{atomic}, Hodkinson asks whether atomic such algebras are completely representable. 

In this paper we answer Hodkinson's question completely; but we deal with the monoid $T={}^nn$, with transpositions and replacements as a set of 
generators; all our results apply to Pinter's algebras. In particular, we show that atomic algebras 
are not necessarily completely representable, but that the class of completely representable algebras
is far less complex than in the case  when we have cyindrifiers, like cylindric algebras. It turns out that  
this class is finitely axiomatizable in first order logic by a very simple set
of first order sentences, expressing additivity of the extra non-boolean operations, namely, the 
substitutions. Taking the appropriate reduct we answer Hodkinson's question formulated for Pinter's algebras.

We also show that this variety is locally finite and has the superamalgamaton property.
All results except for local finiteness are proved to hold for infinite dimensions.

We shall always deal with a class $K$ of Boolean algebras with operators. We shall denote its corresponding multi-dimensional modal lgic by $\L_K$.

\section{Representability}

Here we deal with algebras, where substitutions are indexed by transpositions and replacements, so 
that we are dealing with the full monoid $^nn$.
A transpostion that swaps $i$ and $j$ will be denoted by $[i,j]$ and the replacement that take $i$ to $j$ and leaves everything else 
fixed will be denoted by $[i|j]$.The treatment resembles closely Sagi's \cite{sagiphd}, with one major difference, and that is we prove that the class of subdirect product of full set 
algebras is a variety (this is not the case with Pinter's algebras).

\begin{definition}[\emph{Substitution Set Algebras with Transpositions}]
Let $U$ be a set. \emph{The full substitution set algebra with transpositions of dimension} $\alpha$ \emph{with base} $U$ is the algebra
$$\langle\mathcal{P}({}^\alpha U); \cap,\sim,S^i_j,S_{ij}\rangle_{i\neq j\in\alpha},$$
where  the $S_i^j$'s and  $S_{ij}$'s are unary operations defined
by $$S_{i^j}(X)=\{q\in {}^\alpha U:q\circ [i|j]\in X\},$$
and $$S_{ij}(X)=\{q\in {}^{\alpha}U: q\circ [i,]\in X\}.$$
The class of \emph{Substitution Set Algebras with Transpositions of dimension} $\alpha$ is defined as follows:
$$SetSA_\alpha=\mathbf{S}\{\A:\A\text{ is a full substitution set algebra with transpositions} $$
$$\text{of dimension }\alpha  \text{ with base }U,\text{ for some set }U\}.$$
\end{definition}

The full set algebra $\wp(^{\alpha}U)$ can be viewed as the complex algebra of the atom structure or the modal frame 
$(^{\alpha}U, S_{ij})_{i,j\in \alpha}$ where
for all $i,j, S_{ij}$ is an accessibility  binary relation, such that for $s,t\in {}^{\alpha}U$, $(s,t)\in S_{ij}$ iff $s\circ [i,j]=t.$ 
When we consider arbitrary subsets of the square $^nU$,
then from the modal point of view we are restricting or relativizing the states or assignments to $D$.
On the other hand, subalgebras of full set algebras, can be viewed as {\it general} modal frames, 
which are $BAO$'s and ordinary frames, rolled into one. 

In this context, if one wants to use traditional terminology from modal logic, this means that the assignments are {\it not} 
links between the possible (states) worlds of the model; they {\it themselves} 
are the possible (states) worlds.

\begin{definition}[\emph{Representable Substitution Set Algebras with Transpositions}]
The class of {\it representable substitution set algebras with transpositions of dimension $\alpha$ }is defined to be 
$$RSA_\alpha=\mathbf{SP}SetSA_\alpha.$$
\end{definition}

\begin{definition}[\emph{locally square Set}]
Let $U$ be a given set, and let $D\subseteq{}^\alpha U.$ We say that $D$ is \emph{locally square} iff it satisfies the following condition:
$$(\forall i\neq j\in\alpha)(\forall s\in{}^\alpha U)(s\in D\Longrightarrow s\circ [i/j]\mbox{ and }s\circ [i,j]\in D),$$

\end{definition}

\begin{definition}[\emph{locally square Algebras}]
The class of \emph{locally square Set Algebras} of dimension $\alpha$
is defined to be
$$WSA_{\alpha}=\mathbf{SP}\{\langle\mathcal{P}(D); \cap,\sim,S^i_j,S_{ij}\rangle_{i\neq j\in \alpha}: U\text{ \emph{is a set}},
D\subseteq{}^\alpha U\text{\emph{permutable}}\}.$$
Here the operatins are relatvized to $D$, namely 
$S_j^i(X)=\{q\in D:q\circ [i/j]\in X\}$ and $S_{ij}(X)=\{q\in D:q\circ [i,j]\in X\}$, and $\sim$ is complement w.r.t. $D$.\\
If $D$ is a locally square set then the algebra $\wp(D)$
is defined to be $$\wp(D)=\langle\mathcal{P}(D);\cap,\sim,S^i_j,S_{ij}\rangle_{i\neq j\in n}.$$
\end{definition}

It is easy to show:
\begin{theorem}\label{relativization}
Let $U$ be a set and suppose $G\subseteq{}^n U$ is
locally square. Let $\A=\langle\mathcal{P}({}^n U);\cap,\sim,S^i_j,S_{ij}\rangle_{i\neq j\in n}$ and
let $\mathcal{B}=\langle\mathcal{P}(G);\cap,\sim,S^i_j,S_{ij}\rangle_{i\neq j\in n}$.
Then the following function $h$ is a homomorphism. $$h:\A\longrightarrow\mathcal{B},\quad h(x)=x\cap G.$$
\end{theorem}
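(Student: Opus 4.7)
The plan is to verify that $h$ commutes with each of the four operations in the signature: the Boolean meet $\cap$, Boolean complement $\sim$, the replacement substitution $S^i_j$, and the transposition substitution $S_{ij}$. Injectivity or surjectivity is not asserted, so only the homomorphism equations matter. I would handle the Boolean operations first, then the two substitution operations, which is where the locally square hypothesis actually does work.

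For the Boolean part, the equation $h(x\cap y)=h(x)\cap h(y)$ is immediate from $(x\cap y)\cap G=(x\cap G)\cap (y\cap G)$. For complement, recall that $\sim$ in $\B$ means complement relative to $G$; the identity $({}^nU\setminus x)\cap G=G\setminus (x\cap G)$ is pure set theory and needs no hypothesis on $G$.

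The substantive step is to show $h(S^i_j x)=S^i_j h(x)$ and $h(S_{ij}x)=S_{ij}h(x)$. Spelled out, the first equation is
\[
\{s\in {}^nU: s\circ [i|j]\in x\}\cap G \;=\; \{s\in G : s\circ [i|j]\in x\cap G\}.
\]
The $\supseteq$ inclusion is trivial since $x\cap G\subseteq x$. For $\subseteq$, suppose $s\in G$ and $s\circ [i|j]\in x$; the locally square hypothesis guarantees $s\circ [i|j]\in G$ as well, so $s\circ [i|j]\in x\cap G$, as required. The argument for $S_{ij}$ is the same, with $[i|j]$ replaced by the transposition $[i,j]$; again it is precisely the second clause in the definition of locally square that ensures $s\circ [i,j]$ lands back in $G$.

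I do not anticipate any real obstacle; the theorem is essentially a bookkeeping verification whose point is to isolate the minimal closure condition on $G$ (locally squareness) under which the natural restriction map becomes a homomorphism. The only place where one must be careful is in the complement clause, to remember that $\sim$ on $\B$ is complement in $G$ rather than in ${}^nU$, and in the substitution clauses, where the forward inclusion is the one that uses the hypothesis.
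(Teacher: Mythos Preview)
Your proof is correct and is exactly the routine verification that the paper summarizes as ``straightforward from the definitions.'' You have correctly identified that the locally square hypothesis is used precisely in the forward inclusion for the substitution operations, and that the complement clause requires only remembering that $\sim$ in $\B$ is taken relative to $G$.
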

\begin{proof} Straigtforward from the definitions.
\end{proof}
\begin{definition}[\emph{Small algebras}]
For any natural number $k\leq n$ the algebra
$\A_{nk}$ is defined to be $$\A_{nk}=\langle\mathcal{P}({}^nk);\cap,\sim,S^i_j,S_{ij}\rangle_{i\neq j\in n}.$$ So $\A_{nk}\in SetSA_n$.
\end{definition}

\begin{theorem}
$RSA_n=\mathbf{SP}\{\A_{nk}:k\leq n\}.$
\end{theorem}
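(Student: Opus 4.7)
The inclusion $\mathbf{SP}\{\A_{nk}:k\le n\}\subseteq RSA_n$ is immediate, since each $\A_{nk}$ is by definition a full set algebra in $SetSA_n$ and $RSA_n=\mathbf{SP}(SetSA_n)$. For the converse, because $\mathbf{S}\mathbf{P}$ is idempotent, it suffices to show that every full set algebra $\wp({}^n U)$ embeds as a subdirect product into a family of algebras drawn from $\{\A_{nk}:k\le n\}$. The idea is to cover $\wp({}^n U)$ by the orbits of the natural right action of the transformation monoid ${}^n n$ on ${}^n U$.

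For each $s\in{}^n U$ I would set $G_s=\{s\circ\tau:\tau\in{}^n n\}$ and verify that $G_s={}^n(\mathrm{rng}(s))$: any $s\circ\tau$ has range inside $\mathrm{rng}(s)$, and conversely any $t\in{}^n(\mathrm{rng}(s))$ is of the form $s\circ\tau$ since each fibre $s^{-1}(t(i))$ is nonempty. In particular $G_s$ is closed under composition with any element of ${}^n n$, hence in particular with the generators $[i|j]$ and $[i,j]$, so $G_s$ is locally square. By Theorem \ref{relativization} the map $h_s:\wp({}^n U)\to\wp(G_s)$ given by $h_s(x)=x\cap G_s$ is a homomorphism. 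Setting $k(s)=|\mathrm{rng}(s)|\le n$, any bijection $\mathrm{rng}(s)\to k(s)$ lifts pointwise to a bijection ${}^n\mathrm{rng}(s)\to{}^n k(s)$; because the operations $S_j^i$ and $S_{ij}$ are defined purely by precomposition, relabelling of the base commutes with them, so $\wp(G_s)\cong\A_{n,k(s)}$.

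To close the argument one checks that the family $\{h_s:s\in{}^n U\}$ separates points: if $0\ne x\in\wp({}^n U)$, any $s\in x$ lies in $x\cap G_s=h_s(x)$, so $h_s(x)\ne 0$. Consequently the product map $\wp({}^n U)\to\prod_{s\in{}^n U}\A_{n,k(s)}$ is an embedding, placing $\wp({}^n U)$ in $\mathbf{SP}\{\A_{nk}:k\le n\}$, and therefore $RSA_n\subseteq\mathbf{SP}\{\A_{nk}:k\le n\}$. There is no serious obstacle here; the only point worth attention is the algebraic identification $\wp(G_s)\cong\A_{n,k(s)}$, which is routine because the substitution operations are insensitive to the actual names of the elements of the base and depend only on the cardinality of $\mathrm{rng}(s)$.
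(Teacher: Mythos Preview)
Your proof is correct and follows essentially the same approach as the paper: both arguments relativize the full set algebra to the locally square set ${}^n(\mathrm{rng}(s))$ for a suitable sequence $s$, invoke Theorem~\ref{relativization}, and identify the target with $\A_{n,|\mathrm{rng}(s)|}$ via a base bijection. The only cosmetic difference is that you index the separating family by sequences $s\in{}^nU$ while the paper indexes by nonzero elements $a$ (choosing some $q\in a$); your extra remark that $G_s$ is the ${}^nn$-orbit of $s$ is a nice observation but not needed once you know $G_s={}^n(\mathrm{rng}(s))$.
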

\begin{proof} Exactly like the proof in \cite{sagiphd} for Pinter's algebras, however we include the proof for self completeness.
Of course, $\{\A_{nk}:k\leq n\}\subseteq RSA_n,$ and since, by definition,
$RSA_n$ is closed under the formation of subalgebras and direct products, $RSA_n\supseteq\mathbf{SP}\{\A_{nk}:k\leq n\}.$

To prove the other slightly more difficult inclusion, it is enough to show $SetSA_n\subseteq \mathbf{SP}\{\A_{nk}:k\leq n\}.$
Let $\A\in SetSA_n$ and suppose that $U$ is the base of $\A.$
If $U$ is empty, then $\A$ has one element, and one can easily show $\A\cong\A_{n0}.$
Otherwise for every $0^\A\neq a\in A$ we can construct a homomorphism $h_a$ such that
$h_a(a)\neq 0$ as follows. If $a\neq 0^\A$ then there is a sequence $q\in a.$ Let  $U_0^a=range(q)$.
Clearly, $^nU_0^a$ is locally square and herefore by theorem \ref{relativization} relativizing by $^nU_0^a$ is a homomorphism to
$\A_{nk_a}$ (where $k_a:=|range(q)|\leq n$). Let $h_a$ be this homomorphism. Since $q\in {}^nU_0^a$ we have $h_a(a)\neq0^{\A_{nk_a}}.$
One readily concludes that $\A\in\mathbf{SP}\{\A_{nk}:k\leq n\}$ as desired.
\end{proof}


\subsection{Axiomatizing $RSA_n$.}

We know that the variety generated by $RTA_n$ is finitely axiomatizable since it is generated by finitely many finite algebras,
and because, having a Boolean reduct, it is congruence distributive. This follows from a famous theorem by Baker.

In this section we show that $RSA_n$ is a variety by providing  a particular 
finite set $\Sigma_n$ of equations such that
$\mathbf{Mod}(\Sigma_n)=RSA_n$.
We consider the similarity types $\{., -, s_i^j, s_{ij}\}$, where $.$ is the Boolean meet, $-$ 
is complementation and for $i,j\in n$, $s_i^j$ and $s_{ij}$ are unary
operations, designating substitutions. We consider meets and complementation are the basic operation and $a+b$ abbreviates
$-(-a.-b).$ Our choice of equations is not haphazard; we encode a presentation of the semigroup $^nn$ into the equations, 
and further stipulate that the substitution operations are Boolean 
endomorphisms. We chose the presentation given in \cite{semigroup}
 
\begin{definition}[\emph{The Axiomatization}]\label{ax2}
For all natural $n>1$, let $\Sigma'_n$ be the following set of equations
joins. For distinct $i,j,k,l$
\begin{enumerate}
\item The Boolean axioms
\item $s_{ij}$ preserves joins and meets
\item$s_{kl}s^j_is_{kl}x=s^j_ix$
\item$s_{jk}s^j_is_{jk}x=s^k_ix$
\item$s_{ki}s^j_is_{ki}x=s^j_kx$
\item$s_{ij}s^j_is_{ij}x=s^i_jx$
\item$s^j_is^k_lx=s^k_ls^j_ix$
\item$s^j_is^k_ix=s^k_is^j_ix=s^j_is^k_jx$
\item$s^j_is^i_kx=s^j_ks_{ij}x$
\item$s^j_is^j_kx=s^j_kx$
\item$s^j_is^j_ix=s^j_ix$
\item$s^j_is^i_jx=s^j_ix$
\item$s^j_is_{ij}x=s_{ij}x$
\end{enumerate}
\end{definition}

Let $SA_n$ be the abstractly defined class $\mathbf{Mod}(\Sigma_n)$. In the above axiomatization, it is stipulated that 
$s_{ij}$ respects meet and join. From this it can be easily inferred that $s_{ij}$ respects $-$, so that it is in fact a Boolean endomorphism. 
Indeed if $x=-y$, then $x+y=1$ and $x.y=0$, hence $s_{ij}(x+y)=s_{ij}x+s_{ij}y=0$ and $s_{ij}(x.y)=s_{ij}x.s_{ij}y=0,$ hence
$s_{ij}x=-s_{ij}y$. We chose not to involve negation in our axiomtatization, to make it strictly positive. 

Note that different presentations of $^nn$ 
give rise to different axiomatizations, but of course they are all definitionally equivalent.
Here we are following the conventions of $\cite{HMT2}$ by distinguishing in notation between operations defined in abstract algebras,
and those defined in concrete set algebras. For example,
for $\A\in {\bf Mod}(\Sigma_n)$, $s_{ij}$ denotes the $i, j$ substitution operator, while in set algebras we denote
the (interpretation of this) operation by capital $S_{ij}$; similarly for $s_i^j$. This convention will be followed for all algebras considered in this
paper without any further notice.(Notice that the Boolean operations are also distinguished notationally).

To prove our main representation theorem, we need a few preparations: 

\begin{definition}
Let $R(U)=\{s_{ij}:i\neq j\in U\}\cup\{s^i_j:i\neq j\in U\}$ and let $\hat{}:R(U)^*\longrightarrow {}^UU$ be defined inductively as follows:
it maps the empty string to $Id_U$ and for any string $t$, $$(s_{ij}t)^{\hat{}}=[i,j]\circ t^{\hat{}}\;\;and\; (s^i_jt)^{\hat{}}=[i/j]\circ t^{\hat{}}.$$
\end{definition}

\begin{theorem}
For all $n\in\omega$ the set of (all instances of the) axiom-schemas 1 to 11 of Def.\ref{ax2}
is a presentation of the semigroup ${}^nn$ via generators $R(n)$ (see \cite{semigroup}).
That is, for all $t_1,t_2\in R(n)^*$ we have $$\mbox{1 to 11 of Def.\ref{ax2} }\vdash t_1=t_2\text{  iff  }t_1^{\hat{}}=t_2^{\hat{}}.$$
Here $\vdash$ denotes derivability using Birkhoff's calculus for equational logic.
\end{theorem}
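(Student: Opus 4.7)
The proof splits into the standard two directions: soundness ($\vdash$ implies $\hat{}$-equality in $^nn$) and completeness (the converse). For soundness, I would verify each of the semigroup axioms (items 3--13 of Definition~\ref{ax2}) under the interpretation $s_{ij}\mapsto[i,j]$ and $s^i_j\mapsto[i|j]$, viewing these as elements of $^nn$ and reading the equations as identities between compositions in the transformation monoid. This is pointwise case analysis on $n$: for each axiom, fix an arbitrary $m\in n$ and compare the image of $m$ under both sides, splitting cases on whether $m$ coincides with one of the distinguished indices $i,j,k,l$. None of the checks is conceptually hard, but a careful bookkeeping is needed, particularly for axioms~9 and 13, which mix replacements and transpositions in a nontrivial way.

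The content of the theorem is in the completeness direction, and here the plan is a normal-form argument. Every $f\in{}^nn$ admits a factorisation $f=\sigma\circ e$, where $\sigma\in S_n$ is a permutation and $e$ is a canonical idempotent whose image equals $\mathrm{im}(f)$; moreover $e$ can be written as a product of replacement maps $[i|j]$ arranged in a standard order. My goal is to show that any string in $R(n)^*$ can be rewritten, using only axioms 3--13, into a canonical string of the form (transposition block)(replacement block) in this standardised form. The tools are: axioms 3--6, which describe how to conjugate a replacement by a transposition (allowing me to push transpositions leftwards past replacements, at the cost of relabelling indices); axiom~7, which says distinct-index replacements commute (allowing me to sort the replacement block); and axioms 8--13, which are collapse relations among replacements that let me reduce the replacement block to an idempotent of the right image size. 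The transposition block is then put in normal form using the well-known Coxeter presentation of $S_n$, whose defining relations are consequences of axioms 3--6 (this is the hook into~\cite{semigroup}).

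Once every word reduces to such a normal form, a counting argument finishes completeness: the set of normal forms has cardinality at most $|{}^nn|=n^n$, the map $\hat{}$ is surjective by the classical fact that transpositions and replacements together generate $^nn$, and soundness guarantees that the induced map from the quotient semigroup to $^nn$ is well-defined. Injectivity then follows from the pigeonhole principle, so $\hat{}$ factors through an isomorphism, which is exactly the completeness statement $t_1^{\hat{}}=t_2^{\hat{}}\Rightarrow t_1=t_2$.

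The main obstacle is the normal-form reduction itself, specifically verifying that the relations really are rich enough to move every occurrence of a transposition leftwards and to collapse the replacement block to its idempotent core. Because the paper appeals explicitly to \cite{semigroup}, the cleanest execution is to check schema-by-schema that the axioms in Definition~\ref{ax2} match (possibly up to transparent renaming) the presentation in \cite{semigroup} and then invoke their theorem; the only work left on the author's side is the soundness verification and the bookkeeping ensuring the dictionary between the two presentations is complete.
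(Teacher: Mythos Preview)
Your proposal is correct and substantially more detailed than what the paper actually does. The paper's proof is a one-line citation: it asserts that axiom-schemas 3--13 of Definition~\ref{ax2} are precisely the defining relations for ${}^nn$ given in \cite{semigroup}, and defers both directions to that source. There is no soundness verification, no normal-form argument, and no counting---just the observation that the schemas ``correspond exactly'' to the known presentation.

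Your closing paragraph already anticipates this: matching the axioms against the presentation in \cite{semigroup} and invoking their result is exactly the paper's move. The rest of your proposal---the soundness case analysis, the $\sigma\circ e$ factorisation, the normal-form reduction, and the pigeonhole count---is a sound outline of how one would prove the presentation theorem from scratch, and it is what \cite{semigroup} (Ganyushkin--Mazorchuk) presumably carries out. So you have sketched a self-contained argument where the paper is content to cite; nothing you wrote is wrong, but for the purposes of this paper the only work required is the dictionary check between Definition~\ref{ax2} and the cited presentation.
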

\begin{proof}
This is clear because the mentioned schemas correspond exactly to the set of relations governing the generators
of ${}^nn$ (see \cite{semigroup}).
\end{proof}

\begin{definition}For every $\xi\in {}^nn$ we associate
a sequence $s_\xi\in R(U)^*$ (like we did before for $S_n$ using $^nn$ instead) such that $s_\xi^{\hat{}}=\xi.$ Such an $s_\xi$ exists, since $R(n)$ generates ${}^nn.$
\end{definition}

Like before, we have
\begin{lemma}\label{lemma}
Let $\A$ be an $RSA_n$ type $BAO$. Suppose $G\subseteq {}^nn$ is a locally square set,
and $\langle\mathcal{F}_\xi:\xi\in G\rangle$ is a system of ultrafilters of $\A$ such that for all $\xi\in G,\;i\neq j\in n$ and $a\in\A,$
the following conditions hold:
$${S_{ij}}^\A(a)\in\mathcal{F}_\xi\Leftrightarrow a\in \mathcal{F}_{\xi\circ[i,j]}\quad\quad (*),\text{and}$$ $${S^i_j}^\A(a)\in\mathcal{F}_\xi\Leftrightarrow a\in \mathcal{F}_{\xi\circ[i/j]}\quad\quad (**)$$  Then the following function $h:\A\longrightarrow\wp(G)$ is a homomorphism$$h(a)=\{\xi\in G:a\in \mathcal{F}_\xi\}.$$
\end{lemma}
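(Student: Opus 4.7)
The plan is to verify the homomorphism conditions one operation at a time, treating the Boolean and substitution clauses separately, and using the defining properties of the $\mathcal{F}_\xi$ in each case. Since the codomain is $\wp(G)$, one must also check that the relevant precompositions $\xi \circ [i,j]$ and $\xi \circ [i/j]$ stay inside $G$; this is exactly what the local squareness hypothesis on $G$ provides, so the compatibility conditions $(\ast)$ and $(\ast\ast)$ are well-posed.

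First I would dispatch the Boolean clauses. Because each $\mathcal{F}_\xi$ is a \emph{Boolean} ultrafilter, $a \cdot b \in \mathcal{F}_\xi$ iff $a, b \in \mathcal{F}_\xi$, and $-a \in \mathcal{F}_\xi$ iff $a \notin \mathcal{F}_\xi$. Feeding these equivalences into the defining formula $h(x) = \{\xi \in G : x \in \mathcal{F}_\xi\}$ yields $h(a \cdot b) = h(a) \cap h(b)$ and $h(-a) = G \setminus h(a)$ at once. Both right-hand sides are the concrete operations in $\wp(G)$, so this part is immediate.

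Next I would handle the substitution clauses, which are the substantive ones. Unfolding the definition gives
$$h(S_{ij}^{\A}(a)) = \{\xi \in G : S_{ij}^{\A}(a) \in \mathcal{F}_\xi\},$$
and hypothesis $(\ast)$ rewrites the inner condition as $a \in \mathcal{F}_{\xi \circ [i,j]}$. On the concrete side, the action of $S_{ij}$ in $\wp(G)$ sends $h(a)$ to $\{\xi \in G : \xi \circ [i,j] \in h(a)\}$, which unfolds to $\{\xi \in G : a \in \mathcal{F}_{\xi \circ [i,j]}\}$. The two sets agree, proving $h(S_{ij}^{\A}(a)) = S_{ij}(h(a))$. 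The case of the replacement $(S^i_j)^{\A}$ is entirely parallel, invoking $(\ast\ast)$ in place of $(\ast)$ and the closure of $G$ under precomposition by $[i/j]$.

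There is no real obstacle here: the conditions $(\ast)$ and $(\ast\ast)$ have been arranged precisely so as to match, symbol for symbol, the concrete action of a substitution on a set of sequences, so once one sees how to unwind the definitions the lemma reduces to a short bookkeeping exercise. The only thing worth emphasising is the role of local squareness, which is what keeps all the indices $\xi \circ \tau$ inside the system $\langle \mathcal{F}_\xi : \xi \in G\rangle$ and thereby makes both sides of each identity well-defined.
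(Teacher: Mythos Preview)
Your proposal is correct and is exactly the direct verification the paper has in mind: the paper does not spell out a proof for this lemma (it merely prefaces it with ``Like before, we have''), treating it as a routine check from the definitions and the ultrafilter properties, which is precisely what you have written out.
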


Now, we show, unlike replacement algebras $RSA_n$ is a variety.
\begin{theorem}\label{variety} For any finite $n\geq 2$,
$RSA_n=SA_n$
\end{theorem}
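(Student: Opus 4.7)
The inclusion $RSA_n \subseteq SA_n$ is the easy direction: every equation in $\Sigma_n$ is valid in a full set algebra $\wp(^nU)$, because the concrete operations $S_{ij},\,S_i^j$ are Boolean endomorphisms and because the composition laws (axioms 3--13) simply translate the chosen presentation of the monoid $^nn$ through the generators $R(n)$. Since $SA_n=\mathbf{Mod}(\Sigma_n)$ is a variety and $RSA_n=\mathbf{SP}\,SetSA_n$ is contained in it, one inclusion is immediate. The substance of the theorem is the converse $SA_n\subseteq RSA_n$.

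For the converse, the plan is to represent each $\A\in SA_n$ as a subdirect product of full set algebras, using Lemma \ref{lemma} with the locally square set $G={}^nn$ (which is trivially locally square since it is closed under all of $^nn$). Fix a non-zero $a\in\A$ and pick any Boolean ultrafilter $\F$ of $\A$ with $a\in\F$. For each $\xi\in{}^nn$ choose, via the previous definition, a word $s_\xi\in R(n)^*$ with $s_\xi^{\hat{}}=\xi$ and put
\[
\F_\xi \;=\; \{\,x\in A : s_\xi^{\A}(x)\in\F\,\},
\]
with $\F_{Id_n}=\F$. The first task is to show this is well defined: if $s_\xi$ and $t_\xi$ are two words whose images under $\hat{}$ coincide, then by the preceding theorem axioms 1--11 of Def.\ref{ax2} prove $s_\xi=t_\xi$ as terms, so $s_\xi^{\A}=t_\xi^{\A}$ in every model of $\Sigma_n$. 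The second task is to observe that each generator $s_{ij}$ and $s_i^j$ is a Boolean endomorphism of $\A$ (axioms 1--2 plus the short computation in the paragraph after Def.\ref{ax2}), so each composite $s_\xi^{\A}$ is a Boolean endomorphism, whence $\F_\xi$ is a Boolean ultrafilter of $\A$.

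Next I verify the two hypotheses of Lemma \ref{lemma}. For $(*)$:
\[
S_{ij}^{\A}(a)\in\F_\xi \;\Longleftrightarrow\; s_\xi^{\A}(s_{ij}^{\A}(a))\in\F \;\Longleftrightarrow\; s_{s_\xi\cdot s_{ij}}^{\A}(a)\in\F \;\Longleftrightarrow\; a\in\F_{\xi\circ[i,j]},
\]
where the middle equality uses that $s_\xi^{\A}\circ s_{ij}^{\A}=s_{s_\xi\cdot s_{ij}}^{\A}$ (again a consequence of axioms 1--11 together with well-definedness), and the final equivalence uses $(s_\xi\cdot s_{ij})^{\hat{}}=[i,j]\circ\xi=\xi\circ[i,j]$ with the right convention for $\hat{}$. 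The verification of $(**)$ is identical with $[i/j]$ in place of $[i,j]$. Lemma \ref{lemma} now delivers a homomorphism $h_a:\A\to\wp(^nn)$, and since $a\in\F_{Id_n}$ we have $Id_n\in h_a(a)$, so $h_a(a)\neq 0$. The codomain $\wp(^nn)$ is a full set algebra with base $n$, hence lies in $SetSA_n$. Taking the product of all the $h_a$'s as $a$ ranges over $A\setminus\{0\}$ gives a subdirect embedding of $\A$ into a product of full set algebras, so $\A\in\mathbf{SP}\,SetSA_n=RSA_n$.

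The main obstacle is really the bookkeeping in showing $\F_\xi$ depends only on $\xi$ and not on the chosen word $s_\xi$; everything else (ultrafilter property, the two Henkin-style conditions, the final subdirect product) is a routine consequence once one knows that axioms 1--11 of Def.\ref{ax2} encode a presentation of $^nn$. No new ingredient is needed to handle the transpositions beyond what already works for Pinter's algebras in \cite{sagiphd}; the presentation from \cite{semigroup} has been chosen precisely so that the calculation goes through verbatim.
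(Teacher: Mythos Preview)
Your proof is correct and follows essentially the same route as the paper: pick an ultrafilter $\F$ containing a given nonzero $a$, set $\F_\xi=\{x:s_\xi^{\A}(x)\in\F\}$ for each $\xi\in{}^nn$, verify the hypotheses of Lemma~\ref{lemma} with $G={}^nn$, and obtain a separating homomorphism into $\wp({}^nn)=\A_{nn}$. The paper's proof is terser but identical in substance; you have simply spelled out the well-definedness of $\F_\xi$ and the endomorphism argument that the paper leaves implicit.

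One small slip: in your verification of $(*)$ you write $(s_\xi\cdot s_{ij})^{\hat{}}=[i,j]\circ\xi=\xi\circ[i,j]$. The middle expression is wrong---$[i,j]\circ\xi$ and $\xi\circ[i,j]$ do not coincide in general---but the correct value is indeed $\xi\circ[i,j]$ (from the inductive definition of $\hat{}$, concatenation on the right corresponds to composition on the right), so your final conclusion stands. Just delete the spurious ``$[i,j]\circ\xi=$''.
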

\begin{proof}
Clearly, $RSA_n\subseteq SA_n$ because $SetSA_n\models\Sigma'_n$ (checking it is a routine computation).
Conversely, $RSA_n\supseteq SA_n$.
To see this, let $\A\in SA_n$ be arbitrary.
We may suppose that $\A$ has at least two elements, otherwise, it is easy to represent $\A$.
For every $0^\A\neq a\in A$ we will construct a homomorphism $h_a$ on $\A_{nn}$ such that $h_a(a)\neq 0^{\A_{nn}}$.
Let $0^\A\neq a\in A$ be an arbitrary element.
Let $\mathcal{F}$ be an ultrafilter over $\A$ containing $a$,
and for every $\xi\in {}^nn$, let $\mathcal{F}_\xi=\{z\in A: S^\A_\xi(z)\in\mathcal{F}\}$ (which is an ultrafilter).
(Here we use that all maps in $^nn$ are available, which we could not do before). 
Then, $h:\A\longrightarrow\A_{nn}$ defined by $h(z)=\{\xi\in{}^nn:z\in\mathcal{F}_{\xi}\}$ is a homomorphism by \ref{lemma} as $(*),$ $(**)$ hold.
\end{proof}
\begin{corollary} Simple algebras are finite.
\end{corollary}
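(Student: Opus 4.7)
The plan is to leverage the concrete subdirect representation already established, namely the identity $RSA_n = \mathbf{SP}\{\A_{nk} : k \leq n\}$ proved earlier (together with $RSA_n = SA_n$ from Theorem \ref{variety}). Since each small algebra $\A_{nk}$ has underlying Boolean set $\wp(^n k)$, which has cardinality $2^{k^n}$, the algebras $\A_{nk}$ for $k \leq n$ are all finite; so the task reduces to showing that a simple algebra in $RSA_n$ must embed into a single one of them.

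Let $\A \in SA_n$ be simple, and assume without loss of generality that $|\A| \geq 2$. By Theorem \ref{variety} together with the structure theorem $RSA_n = \mathbf{SP}\{\A_{nk} : k \leq n\}$, there is an injective homomorphism $e : \A \hookrightarrow \prod_{i \in I} \A_{n k_i}$ with each $k_i \leq n$. For each index $i \in I$, composition with the $i$-th projection yields a homomorphism $h_i = \pi_i \circ e : \A \to \A_{n k_i}$. Because $e$ is injective and $\A$ is nontrivial, at least one $h_i$ must fail to be the zero (constant) map, i.e., $\ker h_i \neq \A \times \A$.

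By simplicity, the only congruences on $\A$ are the identity and the universal relation. Since $\ker h_i$ is not the universal relation, it must be the identity, so $h_i$ is injective. Therefore $\A$ embeds into $\A_{n k_i}$ for some $k_i \leq n$. As $|\A_{n k_i}| = 2^{k_i^{\,n}} \leq 2^{n^n}$ is finite, we conclude $|\A| \leq 2^{n^n}$ and $\A$ is finite.

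There is no real obstacle here; the corollary is essentially immediate from the representation theorem, the only subtlety being the standard observation that an injective homomorphism into a direct product is nontrivial on at least one coordinate, which combined with simplicity promotes that coordinate to an embedding into a finite factor. The uniform bound $|\A| \leq 2^{n^n}$ comes out as a free byproduct of the argument.
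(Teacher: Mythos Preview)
Your argument is correct and follows essentially the same idea as the paper: use the representation of $SA_n$ into finite set algebras together with simplicity to obtain an embedding into a single finite $\A_{nk}$. The only difference is that the paper is slightly more direct --- it cites the specific homomorphism $h:\A\to\A_{nn}$ constructed in the proof of Theorem~\ref{variety} and observes that simplicity forces $h$ to be injective --- whereas you pass through the subdirect decomposition $RSA_n=\mathbf{SP}\{\A_{nk}:k\leq n\}$ and then project; both routes yield the same bound $|\A|\leq 2^{n^n}$.
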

\begin{proof} Finiteness follow from the previous theorem, since if $\A$ is simple, then the map $h$ defined above is injective. 
\end{proof}
However, not any finite algebra is simple.  Indeed if $V\subseteq {}^nn,$  and $s\in V$  is constant, then $\wp(\{s\})$ is a homomorphic image of 
$\wp(V)$. So if $|V|>2$, then this homomorphism will have a non-trivial kernel.
Let $Sir(SA_n)$ denote the class of subdirectly indecomposable algebrbra, and $Sim(SA_n)$ be the class of simple algebras.

\begin{question} Characterize the simple and subdirectly irreducible elements, is $SA_n$ a discriminator variety?
\end{question}
\begin{theorem}\label{super}$SA_n$ is locally finite, and has the superamalgamation property
\end{theorem}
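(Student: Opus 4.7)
The plan is to handle the two conclusions separately. Local finiteness will follow immediately from the presentation of ${}^nn$ encoded in the axioms, while superamalgamation will be proved by an ultrafilter construction modelled on the representation proof of Theorem \ref{variety}.

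For local finiteness, I would argue as follows. By the presentation theorem recalled just before the definition of $s_\xi$, every word in the generators $s_{ij}$ and $s_i^j$ reduces, modulo the axioms of Definition \ref{ax2}, to a canonical substitution $s_\xi$ indexed by some $\xi \in {}^nn$, and each $s_\xi$ is a Boolean endomorphism. Consequently, given a finite $X \subseteq \A$ with $|X| = k$, the set $Y = \{s_\xi x : x \in X,\ \xi \in {}^nn\}$ has cardinality at most $k \cdot n^n$ and is invariant under every $s_\xi$. Hence the $SA_n$-subalgebra of $\A$ generated by $X$ coincides with the Boolean subalgebra generated by $Y$, whose cardinality is at most $2^{2^{k n^n}}$. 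In particular, finitely generated elements of $SA_n$ are finite.

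For superamalgamation, I would assume without loss of generality that $\A$ is a common subalgebra of $\B, \C \in SA_n$ and set
$$I = \{(F_{\B}, F_{\C}) \in \Uf(\B) \times \Uf(\C) : F_{\B} \cap \A = F_{\C} \cap \A\}.$$
For each $(F_{\B}, F_{\C}) \in I$, the construction in the proof of Theorem \ref{variety} yields homomorphisms $h_{F_{\B}} : \B \to \A_{nn}$ and $h_{F_{\C}} : \C \to \A_{nn}$ given by $z \mapsto \{\xi \in {}^nn : S_\xi^{\B} z \in F_{\B}\}$ and $z \mapsto \{\xi \in {}^nn : S_\xi^{\C} z \in F_{\C}\}$. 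Because $\A$ is closed under substitutions and $F_{\B} \cap \A = F_{\C} \cap \A$, the restrictions of $h_{F_{\B}}$ and $h_{F_{\C}}$ to $\A$ coincide. Setting $\D := \prod_{(F_{\B},F_{\C}) \in I} \A_{nn} \in SA_n$ and defining $H : \B \to \D$, $K : \C \to \D$ coordinatewise gives homomorphisms that agree on $\A$; a routine ultrafilter-extension argument shows they are injective.

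To verify the superamalgamation condition, let $b \in \B$ and $c \in \C$ with $H(b) \leq K(c)$, and consider the ideals
$$J_1 = \{a \in \A : a \cdot b = 0^{\B}\}, \qquad J_2 = \{a \in \A : a \leq c \text{ in } \C\}.$$
If there exist $a_1 \in J_1$ and $a_2 \in J_2$ with $a_1 + a_2 = 1^{\A}$, then $a := -a_1$ satisfies $b \leq a$ in $\B$ (since $a_1 \cdot b = 0$) and $a \leq a_2 \leq c$ in $\C$, providing the required interpolant. Otherwise $J_1 + J_2$ is a proper ideal of $\A$, so the Boolean prime ideal theorem delivers an ultrafilter $F_{\A}$ of $\A$ disjoint from both $J_1$ and $J_2$. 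These disjointness conditions are exactly what is needed to extend $F_{\A}$ to an ultrafilter $F_{\B} \ni b$ of $\B$ and to an ultrafilter $F_{\C} \ni -c$ of $\C$. The resulting pair $(F_{\B}, F_{\C}) \in I$ witnesses $\mathrm{id}_n \in h_{F_{\B}}(b) \setminus h_{F_{\C}}(c)$, contradicting $H(b) \leq K(c)$ at that coordinate. The main obstacle is this last Boolean-algebraic translation between the ordering $H(b) \leq K(c)$ in the product and the existence of an interpolant in $\A$; the remaining steps are essentially rereadings of the proof of Theorem \ref{variety}.
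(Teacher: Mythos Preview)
Your argument for local finiteness is essentially the paper's: both observe that the substitution-closure $Y=\{s_\xi x:x\in X,\ \xi\in{}^nn\}$ of a finite generating set is finite and generates the same subalgebra as a Boolean algebra, giving the bound $2^{2^{|X|\cdot n^n}}$.

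For superamalgamation your route genuinely differs from the paper's. The paper works inside the free algebra $\Fr_X SA_n$, proves a Craig-style interpolation statement for $\Sg X_1$, $\Sg X_2$, $\Sg(X_1\cap X_2)$ by an ultrafilter argument (building $F,G$ over a common $H^*$ and using the representation map $h(a)=\{\eta:s_\eta a\in F\}$ to derive a contradiction), and then cites Maksimova's theorem that interpolation in free algebras implies SUPAP. You instead construct the superamalgam directly: indexing a product of copies of $\A_{nn}$ by the set $I$ of pairs of ultrafilters of $\B,\C$ that agree on $\A$, and verifying the SUPAP inequality by the ideal computation with $J_1,J_2$. The underlying ultrafilter-plus-representation mechanism is the same in both proofs; what you gain is self-containment (no appeal to \cite{Mak}) and an explicit description of the amalgam, while the paper's approach isolates the interpolation property in free algebras as a result of independent interest. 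Your Boolean translation at the end is correct: properness of $J_1+J_2$ gives an ultrafilter $F_\A$ disjoint from both, and the disjointness conditions are exactly the finite-intersection-property hypotheses needed to extend $F_\A$ to $F_\B\ni b$ and $F_\C\ni -c$ with $F_\B\cap\A=F_\A=F_\C\cap\A$.
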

\begin{proof}
For the first part let $\A\in SA_n$ be generated by $X$ and $|X|=m$. We claim that $|\A|\leq 2^{2^{m\times {}^nn}}.$
Let $Y=\{s_{\tau}x: x\in X, \tau\in {}^nn\}$.  Then
$\A=\Sg^{\Bl\A}Y$. This follows from the fact that the substitutions are Boolean endomorphisms.
Since $|Y|\leq m\times {}^nn,$ the conclusion follows.

For the second part, first a piece of notation. For an algebra $\A$ and $X\subseteq  A$, $fl^{\Bl\A}X$
denotes the Boolean filter generated by $X$. We show that the following  strong form of interpolation holds for the free algebras:
Let $X$ be a non-empty set. Let $\A=\Fr_XSA_n$, and let $X_1, X_2\subseteq \A$ be such that $X_1\cup X_2=X$.
Assume that $a\in \Sg^{\A}X_1$ and $c\in \Sg^{\A}X_2$ are such that $a\leq c$.
Then there exists an interpolant $b\in \Sg^{\A}(X_1\cap X_2)$ such that
$a\leq b\leq c$. Assume that $a\leq c$, but there is no such $b$. We will reach a contradiction.
Let $$H_1=fl^{\Bl\Sg^{\A}X_1}\{a\}=\{x: x\geq a\},$$
$$H_2=fl^{\Bl\Sg^{\A}X_2}\{-c\}=\{x: x\geq -c\},$$
and $$H=fl^{\Bl\Sg^{\A}(X_1\cap X_2)}[(H_1\cap \Sg^{\A}(X_1\cap X_2))\cup (H_2\cap \Sg^{\A}(X_1\cap X_2))].$$
We show that $H$ is a proper filter of $\Sg^{\A}(X_1\cap X_2)$.
For this, it suffices to show that for any $b_0,b_1\in \Sg^{\A}(X_1\cap X_2)$, for any $x_1\in H_1$ and $x_2\in H_2$
 if $a.x_1\leq b_0$ and $-c.x_2\leq b_1$, then $b_0.b_1\neq 0$.
Now $a.x_1=a$ and $-c.x_2=-c$. So assume, to the contrary, that $b_0.b_1=0$. Then $a\leq b_0$ and $-c\leq b_1$ and so
$a\leq b_0\leq-b_1\leq c$, which is impossible because we assumed that there is no interpolant.

Hence $H$ is a proper filter. Let $H^*$ be an ultrafilter of $\Sg^{\A}(X_1\cap X_2)$ containing $H$, and let $F$ be an ultrafilter of $\Sg^{\A}X_1$
and $G$ be an ultrafilter of $\Sg^{\A}X_2$ such that $$F\cap \Sg^{\A}(X_1\cap X_2))=H^*=G\cap \Sg^{\A}(X_1\cap x_2).$$
Such ultrafilters exist.


For simplicity of notation let $\A_1=\Sg^{\A}(X_1)$ and $\A_2=\Sg^{\A}(X_2).$
Define $h_1:\A_1\to \wp({}^nn)$ by
$$h_1(x)=\{\eta\in {}^nn: x\in s_{\eta}F\},$$
and
$h_2:\A_1\to \wp({}^nn)$ by
$$h_2(x)=\{\eta\in {}^nn: x\in s_{\eta}G_\},$$
Then $h_1, h_2$
are homomorphisms, they agree on $\Sg^{\A}(X_1\cap X_2).$
Indeed let $x\in \Sg^{\A}(X_1\cap X_2)$. Then $\eta\in h_1(x)$ iff $s_{\eta}x\in F$ iff
$s_{\eta}x\in F\cap \Sg^{\A}(X_1\cap X_2)=H^*=G\cap \Sg^{\A}(X_1\cap X_2)$ iff $s_{\eta}x\in G$ iff $\eta\in h_2(x)$.
Thus  $h_1\cup h_2$ is a function. By freeness
there is an $h:\A\to \wp({}^nn)$ extending $h_1$ and $h_2$. Now $Id\in h(a)\cap h(-c)\neq \emptyset$ which contradicts
$a\leq c$. The result now follows from \cite{Mak}, stating that the super amalgamtion property for a variety of $BAO$s follow from the interpolation property in the
free algebras.


\end{proof}

\section{Complete representability for $SA_n$}

For $SA_n$, the problem of complete representations is delicate since 
the substitutions corresponding to replacements may not be completey additive, 
and a complete representation, as we shall see, forces the complete additivity of the so represented
algebra.
In fact, as we discover, they are not.
We first show that representations may not preserve arbitrary joins, from which we infer that
the omitting types theorem fails, for the corresponding multi dimensional modal logic. 

Throughout this section $n$ is a natural number $\geq 2$.
All theorems in this subsection, with the exception of theorem \ref{additive}, apply to Pinter's algebras, 
by simply discarding the substitution operations corresponding to transpositions, and modifying the proofs accordingly.

\begin{theorem}\label{counter} There exists a countable $\A\in SA_n$ and $X\subseteq \A$, such that $\prod X=0$, 
but there is no representation $f:\A\to \wp(V)$
such that $\bigcap_{x\in X}f(x)=\emptyset$.
\end{theorem}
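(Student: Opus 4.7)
The plan is to reduce the theorem to producing a countable atomic $\A \in SA_n$ that is not completely representable, and then to take $X$ as the set of co-atoms of $\A$.

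First, I would construct a countable atomic $\A \in SA_n$ in which some replacement substitution, say $s_0^1$, fails to be completely additive: there is an element $b\in \A$ with $b<1$ such that $s_0^1(a)\leq b$ for every atom $a$ of $\A$. Since $\Sigma'_n$ forces each $s_i^j$ to be a Boolean endomorphism (as noted after Definition \ref{ax2} for transpositions; for replacements this is an equation valid in every set algebra and hence, by Theorem \ref{variety}, in $SA_n$), we have $s_0^1(1)=1$, so the inequality $\sum_{a\in\At(\A)} s_0^1(a)\leq b<1 = s_0^1(\sum_{a\in\At(\A)} a)$ exhibits a genuine failure of complete additivity. A convenient way to produce such an $\A$ is a splitting construction: start from a small completely representable $\M\in SA_n$ containing atoms $\alpha,\beta$ with $s_0^1(\alpha)=\beta<1$, split $\alpha$ into countably many new atoms $\{a_k:k\in\omega\}$ in a way consistent with $\Sigma'_n$, and arrange that the weak supremum in the new algebra of $\{s_0^1(a_k):k\in\omega\}$ remains $\beta$, while the weak supremum of $\{a_k:k\in\omega\}$ is forced up to $1$. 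The real technical work is verifying that the resulting algebra still satisfies $\Sigma'_n$ and is atomic.

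Given such $\A$, let $X=\{-a:a\in\At(\A)\}$; atomicity gives $\prod X = -\sum\At(\A)=0$. Suppose for contradiction that some representation $f:\A\to\wp(V)$ satisfies $\bigcap_{x\in X}f(x)=\emptyset$, equivalently $\bigcup_{a\in\At(\A)}f(a)=V$. A standard argument (using atomicity together with the Boolean identity $-\sum Y=\prod(-Y)$ whenever $\sum Y$ exists) then shows that $f$ is a complete representation, i.e.\ preserves every supremum that exists in $\A$. Using complete additivity of $s_0^1$ in the set algebra $\wp(V)$, one computes
$$V=f(1)=f(s_0^1(1))=s_0^1(f(1))=s_0^1\!\Bigl(\bigcup_{a\in\At(\A)} f(a)\Bigr)=\bigcup_{a\in\At(\A)} f(s_0^1(a))\subseteq f(b),$$
so $f(b)=V=f(1)$, and injectivity of $f$ forces $b=1$, contradicting $b<1$.

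The main obstacle is clearly the construction in the first step. Because $SA_n=RSA_n$, every abstract member is already representable, so any failure of complete additivity must be engineered via a careful splitting or subalgebra construction in which the relevant suprema in $\A$ disagree with the corresponding set-theoretic joins in an ambient complete algebra. Once such an $\A$ is in hand, the deduction of the type-omitting failure is purely formal.
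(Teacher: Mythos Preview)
Your reduction --- that it suffices to exhibit a countable $\A\in SA_n$ in which $s_0^1$ fails to preserve some existing supremum, and then take $X$ to be the complements of the summands --- is exactly the paper's first move. The second half of your argument (the chain $V=f(s_0^1 1)=\bigcup f(s_0^1 a)\subseteq f(b)$) is likewise the same computation the paper performs.

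Where you diverge is in the construction itself, and this is where the proposal has a genuine gap. The paper does \emph{not} build an atomic algebra here. It takes an atomless Boolean set algebra $\B$ with unit $U$, lets $\A$ be the subalgebra of $\wp({}^nU)$ consisting of finite unions of rectangles $X_1\times\cdots\times X_n$ with $X_i\in\B$, and sets $S=\{X\times(\sim X)\times U\times\cdots\times U:X\in\B\}$. Then $\sum^{\A}S=1$ (because the only element of $\A$ below $D_{01}$ is $0$), while $S_0^1(X\times\sim X\times\cdots)=\emptyset$ for every $X$. This is completely explicit; nothing needs to be verified beyond closure of $\A$ under the operations, which is a routine computation.

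By contrast, your ``splitting'' sketch is not a construction. As written it is internally inconsistent: you say you split a single atom $\alpha$ into pieces $\{a_k\}$, which would give $\sum a_k=\alpha$ in any reasonable sense, yet you also require $\sum a_k=1$. You would need to specify precisely what the new algebra is, what its elements and operations are, why it satisfies $\Sigma'_n$, why it is atomic, and why the two suprema disagree --- and you acknowledge that this is ``the real technical work'' without doing any of it. An atomic example \emph{can} be built (the paper does so later, in Theorem~\ref{counter2}, using a partition of ${}^nU$ indexed by a non-principal ultrafilter), but it is not a splitting of a single atom, and it is considerably more delicate than the atomless rectangle construction used for the present theorem.

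In short: the reduction is fine and matches the paper; the construction is missing, and the hint you give for it does not work as stated. The paper's atomless, concrete set-algebra construction is both simpler and complete.
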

\begin{proof}
We give the example for $n=2$, and then we show how it extends to higher dimensions.

It suffices to show that there is an algebra $\A$, and a set $S\subseteq A$, such that $s_0^1$ does not preserves $\sum S$.
For if $\A$ had a representation as stated in the theorem, this would mean that $s_0^1$ is completely additive in $\A$.

For the latter statement, it clearly suffices to show that if $X\subseteq A$, and $\sum X=1$,
and there exists  an injection $f:\A\to \wp(V)$, such that $\bigcup_{x\in X}f(x)=V$,
then for any $\tau\in {}^nn$, we have $\sum s_{\tau}X=1$. So fix $\tau \in V$ and assume that this does not happen.
Then there is a $y\in \A$, $y<1$, and
$s_{\tau}x\leq y$ for all $x\in X$.
(Notice that  we are not imposing any conditions on cardinality of $\A$ in this part of the proof).
Now
$$1=s_{\tau}(\bigcup_{x\in X} f(x))=\bigcup_{x\in X} s_{\tau}f(x)=\bigcup_{x\in X} f(s_{\tau}x).$$
(Here we are using that $s_{\tau}$ distributes over union.)
Let $z\in X$, then $s_{\tau}z\leq y<1$, and so $f(s_{\tau}z)\leq f(y)<1$, since $f$ is injective, it cannot be the case that $f(y)=1$.
Hence, we have
$$1=\bigcup_{x\in X} f(s_{\tau}x)\leq f(y) <1$$
which is a contradiction, and we are done.
Now we turn to constructing the required  counterexample, which is an easy adaptation of a 
construction dut to Andr\'eka et all in \cite{AGMNS} to our present situation.
We give the detailed construction for the reader's conveniance.
\begin{enumarab}

\item Let $\B$ be an atomless Boolean set algebra with unit $U$, that has the following property:

For any distinct $u,v\in U$, there is $X\in B$ such that $u\in X$ and $v\in {}\sim X$.
For example $\B$ can be taken to be the Stone representation of some atomless Boolean algebra.
The cardinality of our constructed algebra will be the same as $|B|$.
Let $$R=\{X\times Y: X,Y\in \B\}$$
and
$$A=\{\bigcup S: S\subseteq R: |S|<\omega\}.$$
Then indeed we have $|R|=|A|=|B|$. 

We claim that $\A$ is a subalgebra of $\wp(^2U)$.

Closure under union is obvious. To check intersections, we have:
$$(X_1\times Y_1)\cap (X_2\times Y_2)=(X_1\cap X_2) \times (Y_1\cap Y_2).$$
Hence, if $S_1$ and $S_2$ are finite subsets of $R$, then
$$S_3=\{W\cap Z: W\in S_1, Z\in S_2\}$$
is also a finite subset of $R$ and we have
$$(\bigcup S_1)\cap (\bigcup S_2)=\bigcup S_3$$
For complementation:
$$\sim (X\times Y)=[\sim X\times U]\cup [U\times \sim Y].$$
If $S\subseteq R$ is finite, then
$$\sim \bigcup S=\bigcap \{\sim Z: Z\in S\}.$$
Since each $\sim Z$ is in $A$, and $A$ is closed under intersections, we conclude that
$\sim \bigcup S$ is in $A$.
We now show that it is closed under substitutions:
$$S_0^1(X\times Y)=(X\cap Y)\times U, \\ \ S_1^0(X\times Y)=U\times (X\cap Y)$$
$$S_{01}(X\times Y)=Y\times X.$$
Let
$$D_{01}=\{s\in U\times U: s_0=s_1\}.$$
We claim that the only subset of $D_{01}$ in $\A$ is the empty set.

Towards proving this claim, assume that $X\times Y$ is a non-empty subset of $D_{01}$.
Then for any $u\in X$ and $v\in Y$ we have $u=v$. Thus $X=Y=\{u\}$ for some $u\in U$.
But then $X$ and $Y$ cannot be in $\B$ since the latter is atomless and $X$ and $Y$ are atoms.
Let
$$S=\{X\times \sim X: X\in B\}.$$
Then
$$\bigcup S={}\sim D_{01}.$$
Now we show that
$$\sum{}^{\A}S=U\times U.$$
Suppose that $Z$ is an upper bound different from $U\times U$. Then $\bigcup S\subseteq Z.$ Hence
$\sim D_{01}\subseteq Z$, hence $\sim Z=\emptyset$, so $Z=U\times U$.
Now $$S_{0}^1(U\times U) =(U\cap U)\times U=U\times U.$$
But
$$S_0^1(X\times \sim X)=(X\cap \sim X)\times U=\emptyset.$$
for every $X\in B$.
Thus $$S_0^1(\sum S)=U\times U$$
and
$$\sum \{S_{0}^1(Z): Z\in S\}=\emptyset.$$

\item For $n>2$, one takes $R=\{X_1\times\ldots\times X_n: X_i\in \B\}$ and the definition of $\A$ is the same. Then,
in this case, one takes $S$ to be
$X\times \sim X\times U\times\ldots\times U$
such that $X\in B$. The proof survives verbatim.
\end{enumarab}
\end{proof}

By taking $\B$ to be countable, then $\A$ can be countable, and so it violates the omitting types theorem.

\begin{theorem}\label{additive}Let $\A$ be in $SA_n$. Then $\A$ is completely additive iff $s_0^1$ is completely additive, in particular if 
$\A$ is atomic, and $s_0^1$ is additive, then $\A$ is completely representable.
\end{theorem}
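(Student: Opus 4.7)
The plan is to establish the biconditional first and deduce the complete-representability statement from it. The forward implication is trivial: if every operation of $\A$ is completely additive, then in particular $s_0^1$ is. For the converse I would use that complete additivity is closed under conjugation by Boolean automorphisms, and realize every $s_i^j$ as such a conjugate of $s_0^1$.

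First, each transposition $s_{ij}$ is a Boolean automorphism: by axiom 2 (and the remark after Definition \ref{ax2}) it is a Boolean endomorphism, and since $[i,j]^2 = \mathrm{id}$ in ${}^nn$ and $SA_n = RSA_n$ by Theorem \ref{variety}, the equation $s_{ij} s_{ij} x = x$ holds in $\A$, so $s_{ij}$ is involutive. A Boolean automorphism preserves arbitrary sups and infs, and pre- or post-composition with one preserves complete additivity. Given $i \neq j$ in $n$, choose a permutation $\pi \in {}^nn$ with $\pi(0) = i$, $\pi(1) = j$, write $\pi = \pi_1 \cdots \pi_k$ as a product of transpositions, and set $s_\pi = s_{\pi_1} \cdots s_{\pi_k}$, a Boolean automorphism with inverse $s_{\pi^{-1}}$. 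The monoid identity $[i|j] = \pi \circ [0|1] \circ \pi^{-1}$ lifts, via $SA_n = RSA_n$, to the equation $s_i^j = s_\pi s_0^1 s_{\pi^{-1}}$ in $\A$, displaying $s_i^j$ as a conjugate of $s_0^1$. Thus complete additivity of $s_0^1$ yields that of every $s_\tau$, $\tau \in {}^nn$, and hence of $\A$.

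For the \emph{in particular} clause, assume $\A$ is atomic and $s_0^1$ is (completely) additive; then every $S_\tau$ is completely additive. For each atom $a \in \A$ and each $\xi \in {}^nn$, put $\mathcal{F}_{a, \xi} = \{z \in A : a \leq S_\xi z\}$. Since $a$ is an atom and $S_\xi$ is a Boolean endomorphism, for every $z$ either $a \leq S_\xi z$ or $a \cdot S_\xi z = 0$, so $\mathcal{F}_{a, \xi}$ is an ultrafilter; the compatibility conditions of Lemma \ref{lemma} hold directly, so the map $h_a : \A \to \A_{nn}$ given by $h_a(z) = \{\xi \in {}^nn : a \leq S_\xi z\}$ is a homomorphism. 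Let $V = \At\A \times {}^nn$ and define $f : \A \to \wp(V)$ by $f(z) = \{(a, \xi) : a \leq S_\xi z\}$, a representation of $\A$ into a subdirect product of copies of $\A_{nn}$. Injectivity is immediate from atomicity. Two applications of the principle that an atom below a sup is below a summand give completeness: if $\sum X = y$, then $S_\xi y = \sum_{x \in X} S_\xi x$ by complete additivity, whence $a \leq S_\xi y$ forces $a \leq S_\xi x$ for some $x \in X$, so $f(y) = \bigcup_{x \in X} f(x)$; applying the same principle to $1 = \sum_{c} c$ over atoms $c$ shows every $(a, \xi) \in V$ lies in some $f(c)$, while disjointness of atoms yields uniqueness. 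Hence $f$ is an atomic complete representation of $\A$.

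The main obstacle is the reduction in the second paragraph: transferring the concrete semigroup identity $[i|j] = \pi [0|1] \pi^{-1}$ in ${}^nn$ to an equation in the abstract algebra $\A$. Theorem \ref{variety} does the heavy lifting here, since without $SA_n = RSA_n$ one would need a tedious equational derivation from the presentation axioms. A secondary delicate point is verifying that the filters $\mathcal{F}_{a,\xi}$ are ultrafilters; this relies crucially on both atomicity of $\A$ and the fact that each $S_\xi$ is a genuine Boolean endomorphism, not merely additive.
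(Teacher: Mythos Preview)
Your proof is correct and follows the same strategy as the paper: reduce complete additivity of every $s_i^j$ to that of $s_0^1$ by factoring through transposition substitutions, which are self-conjugate Boolean automorphisms and hence completely additive. The paper's proof is a one-liner that asserts the identity $[i|j]=[0|1]\circ[i,j]$; this identity is actually false for general $i,j$ (e.g.\ take $i=1$, $j=0$), so your conjugation formula $[i|j]=\pi\circ[0|1]\circ\pi^{-1}$ with $\pi(0)=i$, $\pi(1)=j$ is the correct way to carry out the intended argument. Your appeal to $SA_n=RSA_n$ (Theorem~\ref{variety}) to transfer the monoid identity to the abstract algebra is a clean way to avoid an equational derivation; the paper implicitly relies on the same mechanism via its presentation remarks.

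For the ``in particular'' clause, the paper's proof of this theorem gives no argument at all; the content you supply here is essentially what the paper establishes later (Theorem~\ref{atomic} together with the theorem immediately preceding Theorem~\ref{elementary}), so you are anticipating those results rather than diverging from them. Your direct construction via the ultrafilters $\mathcal F_{a,\xi}$ generated by atoms is exactly the principal-ultrafilter argument the paper uses there, just packaged without the Stone-space language.
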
 
\begin{proof} It suffices to show that for $i, j\in n$, $i\neq j$, we have $s_i^j$ is completely additive. 
But $[i|j]= [0|1]\circ [i,j]$, and $s_{[i,j]}$ is completely additive.
\end{proof}
For replacement algebras, when we do not have transpositions, so the above  proof does not work. So in principal we could have an algebra 
such that $s_0^1$ is completely additive in $\A$, while $s_1^0$ is 
not. 

\begin{question} Find a Pinter's algebra for which $s_0^1$ is completely additive but $s_1^0$ is not
\end{question}

However, like $SA_n$, we also have:

\begin{theorem} For every $n\geq 2$, and every distinct $i,j\in n$, there is an algebra $\B\in SA_n$ 
such that $s_i^j$ is not completely additive.
\end{theorem}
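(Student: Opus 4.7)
The plan is to recycle the algebra $\A$ produced in Theorem \ref{counter} and to transfer non-complete-additivity from $s_0^1$ to an arbitrary $s_i^j$ by conjugating with a suitable permutation substitution. The key point is that within $SA_n$, permutation substitutions are Boolean automorphisms, hence completely additive, so non-complete-additivity is preserved under conjugation by them.

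More precisely, given distinct $i,j\in n$, I would choose any bijection $\pi\in{}^nn$ with $\pi(0)=i$ and $\pi(1)=j$; such $\pi$ exists because $i\neq j$, and it decomposes as a product of transpositions in $n$, so the abstract operator $s_\pi$ is available in every $\B\in SA_n$ via the composition law $s_\sigma\circ s_\tau=s_{\sigma\circ\tau}$ encoded in $\Sigma'_n$. A pointwise check on $n$ (handling the trivial subcases where $\{i,j\}\cap\{0,1\}\neq\emptyset$ separately) yields the identity $[0|1]=\pi^{-1}\circ[i|j]\circ\pi$, from which
\[
s_0^1 \;=\; s_{\pi^{-1}}\circ s_i^j\circ s_\pi
\]
holds in every $\B\in SA_n$.

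Next I would note that $s_\pi$ is a Boolean endomorphism (each factor $s_{[k,l]}$ is one by axiom (2), and composition preserves this), and that $s_\pi\circ s_{\pi^{-1}}=s_{\pi\circ\pi^{-1}}=s_{\mathrm{id}}=\mathrm{id}_\B$, so $s_\pi$ is in fact a Boolean automorphism and therefore preserves every supremum that exists in $\B$; the same holds for $s_{\pi^{-1}}$. Taking $\B:=\A$ from Theorem \ref{counter}, if $s_i^j$ were completely additive on $\B$ then the displayed identity would write $s_0^1$ as a composition of three completely additive operators, making $s_0^1$ completely additive on $\A$, which contradicts Theorem \ref{counter}. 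Hence $s_i^j$ is not completely additive on $\B$, as required.

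The argument is almost mechanical once the conjugation identity is in hand; the only thing that needs care is the bookkeeping of composition order (the semantic identity $S_\sigma S_\tau=S_{\sigma\circ\tau}$ conveniently turns a conjugation of maps into a conjugation of operators in the same order) and verifying $s_{\mathrm{id}}=\mathrm{id}_\B$, which follows from the presentation because the identity element of ${}^nn$ is represented by the empty word. No genuine obstacle arises in the degenerate overlap cases, since a permutation $\pi$ with $\pi(0)=i$ and $\pi(1)=j$ always exists.
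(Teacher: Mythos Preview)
Your argument is correct. Both you and the paper use the same algebra $\A$ from Theorem~\ref{counter}, but you reach the conclusion by a different route: the paper simply modifies the witnessing family, replacing $S=\{X\times\sim X\times U\times\cdots\times U:X\in\B\}$ by the family with $X$ in slot $i$ and $\sim X$ in slot $j$, and then repeats the computation showing $s_i^j(\sum S')\neq\sum s_i^j[S']$. You instead invoke the conjugation identity $s_0^1=s_{\pi^{-1}}\circ s_i^j\circ s_{\pi}$ together with the observation that permutation substitutions are Boolean automorphisms (hence completely additive), so complete additivity of $s_i^j$ would force complete additivity of $s_0^1$. The two approaches are really the same move seen from two sides: applying $s_{\pi}$ to the paper's original $S$ yields exactly the modified family $S'$. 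Your version has the advantage of making explicit the general principle that complete additivity of one replacement substitution is equivalent to that of any other (this is essentially the content of Theorem~\ref{additive}), while the paper's version is more hands-on and avoids the need to discuss $s_{\mathrm{id}}=\mathrm{id}$---a point which, to be safe, you could justify by noting that $\A$ is a concrete set algebra, where $S_{\mathrm{id}}$ is visibly the identity.
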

\begin{proof} One modifies the above example by letting $X$ occur in the $ith$ place of the product, 
and $\sim X$, in the $jth$ place.
\end{proof}
Now we turn to the notion of complete representability which is not remote from that of minimal completions \cite{complete}. 
\footnote{One way to show that varieties of representable algebras, like cylindric algebras, are not closed under completions, is to construct an 
atom structure $\F$, such that $\Cm\F$ is not representable, while $\Tm\F$, the subalgebra of 
$\Cm\F$, generated by the atoms,  is representable. This algebra cannot be completely representable;
because a complete representation 
induces a representation of the full complex algebra.}

\begin{definition} Let $\A\in SA_n$ and $b\in A$, then $\Rl_{b}\A=\{x\in \A: x\leq b\}$, with operations relativized to $b$.
\end{definition}

\begin{theorem}\label{atomic} Let $\A\in SA_n$  atomic  and assume that $\sum_{x\in X} s_{\tau}x=b$ for all $\tau\in {}^nn$.
Then $\Rl_{b}\A$ is completely representable.
In particular,  if $\sum_{x\in x}s_{\tau}x=1$, then $\A$ is completely representable.
\end{theorem}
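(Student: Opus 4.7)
The plan is to build, for each atom $x\in X$ of $\Rl_b\A$, a homomorphism $h_x\colon\A\to\wp({}^nn)$ via Lemma~\ref{lemma}, assemble them into a single subdirect representation $h=(h_x)_{x\in X}\colon\Rl_b\A\to\prod_{x\in X}\wp({}^nn)$, and exploit the hypothesis $\sum_{x\in X}s_\tau x=b$ to show that the atom-images cover the top of $h(\Rl_b\A)$.  By a standard argument, atom-cover forces an injective Boolean homomorphism from an atomic algebra to preserve every existing supremum, which is precisely complete representability.

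For each atom $x\in X$ let $F^{(x)}=\{a\in A:a\geq x\}$ be the principal ultrafilter at $x$ and set $F^{(x)}_\tau=\{a:s_\tau a\geq x\}$ for each $\tau\in{}^nn$.  Each $F^{(x)}_\tau$ is an ultrafilter since $s_\tau$ is a Boolean endomorphism, and the identity $s_\sigma\circ s_\tau=s_{\sigma\circ\tau}$ (derivable from $\Sigma'_n$, just as in the proof of Theorem~\ref{variety}) delivers conditions $(*)$ and $(**)$ of Lemma~\ref{lemma} for the system $(F^{(x)}_\tau)_{\tau\in{}^nn}$ on the locally square set $G={}^nn$.  Lemma~\ref{lemma} then supplies the homomorphism $h_x(a)=\{\tau:s_\tau a\geq x\}$; gluing the $h_x$'s componentwise yields $h$ into a product of full set algebras, hence into $RSA_n$ by closure under $\mathbf{SP}$, and a short calculation using $S_\tau h_x(a)=h_x(s_\tau a)$ shows that $h$ respects the operations of $\Rl_b\A$ relativised to $h(b)$.

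Injectivity of $h$ on $\Rl_b\A$ is automatic from atomicity: any $0\neq a\leq b$ lies above some $x\in X$, so $\mathrm{Id}\in h_x(a)$.  For completeness it remains to verify the atom-cover property $h(b)=\bigcup_{x\in X}h(x)$ componentwise.  Fix $x'\in X$ and $\tau$ with $\tau\in h_{x'}(b)$, so that $x'\leq s_\tau b$.  Since $x'\leq b$ and the hypothesis provides $b=\sum_{y\in X}s_\tau y$, the standard atom argument (otherwise $-x'$ would be an upper bound of $\{s_\tau y:y\in X\}$, so $b\leq -x'$ and hence $x'=0$) produces $y\in X$ with $x'\leq s_\tau y$, i.e.\ $\tau\in h_{x'}(y)$; the reverse inclusion is clear because $x\leq b$ for every $x\in X$.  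The ``in particular'' clause is the specialisation $b=1$, $X=\At\A$.

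The main obstacle, I expect, is the bookkeeping needed to translate between the ambient substitutions of $\A$ and the relativised operations of $\Rl_b\A$ under $h$: to ensure $h$ yields a genuine $RSA_n$-representation of $\Rl_b\A$ (not merely of $\A$), one must check $h_x(a\wedge b)=h_x(a)\cap h_x(b)$, $h_x(b-a)=h_x(b)\setminus h_x(a)$, and $h_x(s_\tau a\wedge b)=(S_\tau h_x(a))\cap h_x(b)$ for $a\in\Rl_b\A$.  These are routine Boolean computations but must be done for each operation in the signature.
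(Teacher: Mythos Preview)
Your proposal is correct and follows essentially the same approach as the paper: both build the representation from principal ultrafilters at atoms via the map $a\mapsto\{\tau:s_\tau a\in F\}$ and use the hypothesis $\sum_{x\in X} s_\tau x=b$ to verify the atom-cover property. The only cosmetic differences are that the paper indexes its product over all nonzero $a\leq b$ (choosing an atom below each) rather than directly over $X$, and phrases atom-cover in Stone-space language (the principal ultrafilter avoids the nowhere-dense set $S\setminus\bigcup_{x\in X}N_{s_\tau x}$), which amounts to the same computation you carry out explicitly.
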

\begin{proof} Clearly $\B=\Rl_{b}\A$ is atomic.
For all
$a\neq 0$, $a\leq b$, find an ultrafilter $F$ that contains $a$ and lies outside the nowhere dense set
$S\sim \bigcup_{x\in X} N_{s_{\tau}x}.$ This is possible since $\B$ is atomic, so one just takes the ultrafilter generated by an atom below $a$.
Define for each such $F$ and such $a$, $rep_{F,a}(x)=\{\tau \in {}^nn: s_{\tau}x\in F\}$; for each such $a$
let $V_a={}^nn$, and then set
$g:\B\to \prod_{a\ne 0} \wp(V_a)$ by $g(b)=(rep_{F,a}(b): a\leq b, a\neq 0)$.

\end{proof}

Since $SA_n$ can be axiomatized by Sahlqvist equations, it is closed under taking canonical extensions.
The next theorem says that canonical extensions have complete (square) representations. 
The argument used is borrowed from Hirsch and Hodkinson \cite{step} which is
a first order model-theoretic view of representability, using $\omega$-saturated models.
A model is $\omega$ saturated if every type that is finitely realized, is realized.
Every countable consistent theory has an $\omega$ saturated model.

\begin{theorem}\label{canonical} Let $\A\in SA_n$. Then $\A^+$ is completely representable on a square unit.
\end{theorem}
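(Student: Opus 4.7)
The plan is to adapt the Hirsch--Hodkinson $\omega$-saturation argument. First, by Theorem \ref{variety}, I pick a concrete representation $f\colon \A\hookrightarrow \wp({}^nU)$ for some set $U$. I view this as a first-order structure $\M$ over the base sort $U$, equipped with an $n$-ary predicate $R_a$ interpreted as $f(a)\subseteq {}^nU$ for every $a\in \A$, together with the obvious function symbols picking out coordinates. Passing to an $\omega$-saturated elementary extension $\M^*\succeq \M$ with base $U^*$, the predicates $R_a$ induce a representation $f^*\colon \A\to \wp({}^nU^*)$ which satisfies every first-order sentence that $f$ did.

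For each $s\in {}^nU^*$, set $\mu_s=\{a\in \A: s\in f^*(a)\}$; elementarity ensures that $\mu_s$ is an ultrafilter of the Boolean reduct of $\A$. Identifying $\A^+$ with $\wp(\Uf(\A))$ (with operators defined canonically from $\A$), I define
$$h\colon \A^+\to \wp({}^nU^*),\qquad h(X)=\{s\in {}^nU^*: \mu_s\in X\}.$$
Boolean operations are preserved by direct inspection. The preservation of each $s_\tau$ reduces to the identity $\mu_{s\circ\tau}=\{a: s_\tau a\in \mu_s\}$, which is exactly the (functional) accessibility relation $R_\tau$ used to define $s_\tau^{\A^+}$ on the ultrafilter frame; since $SA_n$ is Sahlqvist axiomatizable, $\A^+\in SA_n$ and $h$ is a genuine $SA_n$-homomorphism whose target is a full square set algebra.

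For completeness, recall that $\A^+$ is atomic with atoms $\{\mu\}$ indexed by $\mu\in \Uf(\A)$; it therefore suffices to prove $h(\{\mu\})\neq\emptyset$ for every $\mu$, for then $\bigcup_{\mu} h(\{\mu\})={}^nU^*$ covers the unit and $h$ preserves all existing joins. But the partial type $p_\mu(\bar{x})=\{R_a(\bar{x}): a\in \mu\}$ is finitely satisfiable in $\M$: any finite conjunction $R_{a_1}(\bar{x})\wedge\cdots\wedge R_{a_k}(\bar{x})$ is equivalent to $R_{a_1\cdot\ldots\cdot a_k}(\bar{x})$, and since $a_1\cdot\ldots\cdot a_k\in\mu$ is nonzero, $f(a_1\cdot\ldots\cdot a_k)\neq\emptyset$. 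By $\omega$-saturation of $\M^*$, the type is realized by some $\bar{s}\in {}^nU^*$, and then $\mu_{\bar{s}}=\mu$ and $\bar{s}\in h(\{\mu\})$.

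The one step I expect to need real care is the verification that $h$ intertwines the set-theoretic operator $S_\tau$ on $\wp({}^nU^*)$ with the canonical-extension operator $s_\tau^{\A^+}$. This is precisely where Sahlqvist canonicity is used: one must unravel the functional canonical relation $R_\tau(\mu,\nu)\iff \nu=\{a: s_\tau a\in \mu\}$ and match it against the set-theoretic condition $s\circ\tau\in f^*(a)\iff s\in f^*(s_\tau a)$ inherited from $\M$ by elementarity. Once this identification is set up cleanly, the rest of the proof is routine and the complete square representability of $\A^+$ drops out.
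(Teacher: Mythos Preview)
Your proof is correct and follows essentially the same $\omega$-saturation argument as the paper: the paper packages the representation as an abstract first-order theory $T(\A)$ and takes an $\omega$-saturated model of it, while you equivalently start from a concrete representation and pass to an $\omega$-saturated elementary extension. One minor point: the substitution-preservation step does not actually require an appeal to Sahlqvist canonicity---the identity $\mu_{s\circ\tau}=\{a:s_\tau a\in\mu_s\}$ follows directly from elementarity of $f^*$, and the intertwining of $S_\tau$ with $s_\tau^{\A^+}$ is then an immediate unraveling of the functional canonical relation, exactly as you outline.
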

\begin{proof} For a given $\A\in SA_n$, we define a first order language $L(\A)$,
which is purely relational; it has one $n-ary$ relation symbol for each element of $\A$. Define
an $L(\A)$ theory $T(\A)$ as follows; for all $R,S,T\in \A$ and $\tau\in S_n$:
$$\sigma_{\lor}(R,S,T)=[R(\bar{x})\longleftrightarrow S(\bar{x})\lor T(\bar{x})], \\ R=S\lor T$$
$$\sigma_{\neg}(R,S)=(1(\bar{x})\to (R(\bar{x})\longleftrightarrow \neg S(\bar{x})], \\ R=\neg S$$
$$\sigma _{\tau}(R,S)=R(\bar{x})\longleftrightarrow s_{\tau}S(\bar{x}), \\R=s_{\tau}S.$$
$$\sigma_{\neq 0}(R)=\exists \bar{x}R(\bar{x}).$$
Let $\A$ be given. Then since $\A$ has a representation, hence $T(\A)$ is a consistent first order theory.
Let $M$ be an $\omega$ saturated model of $T({\A})$. Let $1^{M}={}^nM$.
Then for each $\bar{x}\in 1^{M}$, the set
$f_{\bar{x}}=\{a\in \A: M\models a(\bar{x})\}$ is an ultrafilter of $\A$.
Define
$h:\A^+\to \wp ({}^nM)$ via
$$S\mapsto \{\bar{x}\in 1^M: f_{\bar{x}}\in S\}.$$
Here $S$, an element of $\A^+$, is a set of ultrafilters of $\A$.
Clearly, $h(0)=h(\emptyset)=\emptyset$. $h$ respects complementation, for $\bar{x}\in 1^M$ and $S\in \A^+$,
$\bar{x}\notin h(S)$ iff $f_{\bar{x}}\notin S$
iff $f_{\bar{x}}\in -S$ iff $\bar{x}\notin h(-S).$
It is also straightforward to check that $h$ preserves arbitrary unions.
Indeed, we have $\bar{x}\in h(\bigcup S_i)$ iff $f_{\bar{x}}\in \bigcup S_i$ iff $\bar{x}\in h(S_i)$ for some $i$.

We now check that $h$ is injective. Here is where we use saturation. Let $\mu$ be an ultrafilter in $\A$,
we show that $h(\{\mu\})\neq \emptyset$. Take $p(\bar{x})=\{a(\bar{x}): a\in \mu\}$.
Then this type is finitely satisfiable in $M$. For if $\{a_0(\bar{x}),\ldots, a_{n-1}(\bar{x})\}$ is an arbitrary finite subset of
$p(\bar{x})$, then $a=a_0.a_1..a_{n-1}\in \mu$, so $a>0$. By axiom $\sigma_{\neq 0}(a)$ in $T(\A)$, we have
$M\models \exists\bar{x}a(\bar{x})$. Since $a\leq a_i$ for each $i<n$, we obtain using $\sigma_{\lor}(a_i, a, a_i)$ axiom of $T_{\A}$ that
$M\models \exists \bar{x}\bigwedge_{i<n}a_i(\bar{x})$, showing that $p(\bar{x})$ is finitely satisfiable as required.
Hence, by $\omega$ saturation $p$ is realized in $M$ by some $n$ tuple
$\bar{x}$. Hence $p$ is realized in $M$ by the tuple $\bar{x}$, say.
Now $\mu=f_{\bar{x}}$. So $\bar{x}\in h(\{\mu\}$ and we have proved that $h$ is an injection.
Preservation of the substitution operations is straightforward.

\end{proof}

\begin{lemma} For $\A\in SA_n$, the following two conditions are equivalent:
\begin{enumarab}
\item There exists a locally square set $V,$ and a complete representation $f:\A\to \wp(V)$.
\item For all non zero $a\in A$, there exists a homomorphism $f:\A\to \wp(^nn)$ such that $f(a)\neq 0$, and $\bigcup_{x\in \At\A} f(x)={}^nn$.
\end{enumarab}
\end{lemma}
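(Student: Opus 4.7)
\emph{Plan.} The plan is to prove the two implications separately by direct construction: for $(1)\Rightarrow(2)$, by restricting a given complete representation along an arbitrarily chosen tuple from $f(a)$, and for $(2)\Rightarrow(1)$, by forming the product of the representations granted by the hypothesis.

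\emph{The forward direction $(1)\Rightarrow(2)$.} Given a complete representation $f:\A\to\wp(V)$ with $V$ locally square, and a fixed non-zero $a\in A$, I would pick any $v\in f(a)$. Since $V$ is locally square and the monoid ${}^nn$ is generated by transpositions and replacements, an easy induction yields $v\circ\tau\in V$ for every $\tau\in{}^nn$. Define $h_a:\A\to\wp({}^nn)$ by
\[h_a(x)=\{\tau\in{}^nn:v\circ\tau\in f(x)\}.\]
Boolean meets are preserved at sight; complementation is preserved because the membership $v\circ\tau\in V$ identifies $V\setminus f(x)$ with $\{v\circ\tau\notin f(x)\}$; and the substitution $s_\sigma$ is preserved by the calculation $v\circ\tau\in S_\sigma f(x)\iff v\circ(\tau\circ\sigma)\in f(x)\iff \tau\circ\sigma\in h_a(x)$. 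Taking $\tau=\mathrm{id}$ shows $h_a(a)\neq 0$, and completeness of $f$, which forces $V=\bigcup_{x\in\At\A}f(x)$, yields $\tau\in h_a(x)$ for some atom $x$ whenever $\tau\in{}^nn$.

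\emph{The reverse direction $(2)\Rightarrow(1)$.} Here I would form the product embedding
\[f=\prod_{0\neq a\in A} h_a:\A\longrightarrow \wp(V),\qquad V=\bigsqcup_{0\neq a\in A}{}^nn.\]
The unit $V$ is locally square, being a disjoint union of copies of the locally square set ${}^nn$, and $f$ is injective because $h_a(a)\neq 0$ for every $0\neq a\in A$. Completeness is the only substantive point: if $\sum X=y$ in $\A$, a point of $f(y)$ sits in the $a$-th copy as some $\tau\in h_a(y)$, and the covering hypothesis picks an atom $x_0\in\At\A$ with $\tau\in h_a(x_0)$, so $x_0\cdot y\neq 0$ and hence $x_0\leq y$. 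Since $x_0$ is an atom and $y=\sum X$ is the least upper bound, $x_0\leq x$ for some $x\in X$---otherwise $-x_0$ would be an upper bound of $X$ strictly below $y$, a contradiction---so the point lies in $f(x)$.

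\emph{Expected obstacle.} The only delicate step is the complementation calculation in $(1)\Rightarrow(2)$: if $V$ failed to be locally square, $v\circ\tau$ could fall outside $V$, in which case the identification $f(\sim x)=V\setminus f(x)$ would not translate cleanly into ${}^nn\setminus h_a(x)$, and the putative $h_a$ would not be a homomorphism. Everything else is diagram-chasing, and the completeness hypothesis on $f$ enters the argument only through the single equality $V=\bigcup_{x\in\At\A}f(x)$.
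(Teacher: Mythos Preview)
Your proof is correct and, once unpacked, essentially coincides with the paper's. For $(1)\Rightarrow(2)$ the paper invokes the subdirect decomposition $\wp(V)\subseteq\prod_i\wp({}^nn)$ established earlier and then projects onto a coordinate where $g(a)$ is nonzero; but that decomposition is itself built from maps of the form $z\mapsto\{\xi:v\circ\xi\in z\}$ for points $v\in V$, which is exactly your $h_a$. So your argument is the same projection, written out directly rather than by citation. This has the advantage of being self-contained and of making the covering condition $\bigcup_{x\in\At\A}h_a(x)={}^nn$ transparent, whereas the paper dismisses it with ``clearly $f$ is as required''; your explicit verification via $V=\bigcup_{x\in\At\A}f(x)$ is cleaner. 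For $(2)\Rightarrow(1)$ the paper simply refers back to the product construction used in earlier theorems, and your argument is precisely that construction, including the correct verification of completeness via atoms.

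One small remark: you invoke ``completeness of $f$, which forces $V=\bigcup_{x\in\At\A}f(x)$'' without comment. This is true, but it relies on the nontrivial fact that complete representability forces $\A$ to be atomic (and then $\sum\At\A=1$ is preserved). The paper takes this as known background; it might be worth a one-line justification, e.g.\ that for each $v\in V$ the ultrafilter $\{b:v\in f(b)\}$ must be principal, since otherwise its complementary maximal ideal has supremum $1$ and completeness would place $v$ in some $f(b)$ with $v\notin f(b)$.
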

\begin{demo}{Proof} Having dealt with the other implication before, we prove that (1) implies (2). 
Let there be given a complete representation $g:\A\to \wp(V)$.
Then $\wp(V)\subseteq \prod_{i\in I} \A_i$ for some set $I$, where $\A_i=\wp{}(^nn)$. Assume that $a$ is non-zero,
then $g(a)$ is non-zero, hence $g(a)_i$ is non-zero
for some $i$. Let $\pi_j$ be the $j$th projection $\pi_j:\prod \A_i\to \A_j$, $\pi_j[(a_i: i\in I)]=a_j$.
Define $f:\A\to \A_i$ by $f(x)=(\pi_i\circ g(x)).$
Then clearly $f$ is as required.
\end{demo}

The following theorem is a converse to \ref{atomic}

\begin{theorem}\label{converse} Assume that $\A\in SA_n$ is completely representable.
Let $f:\A\to \wp(^nn)$ be a non-zero homomorphism that is a complete representation. Then $\sum_{x\in X} s_{\tau}x=1$ for every $\tau\in {}^nn$.
\end{theorem}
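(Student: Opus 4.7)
The plan is to lift the complete-representation property through the set-theoretic substitution operator $S_\tau$, using the two facts that (i) a complete representation on an atomic square carries joins of the atoms to the whole unit, and (ii) the set-theoretic $S_\tau$, being defined pointwise by $S_\tau Y=\{s:s\circ\tau\in Y\}$, distributes over arbitrary unions (not merely finite ones).

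First I would observe that, as is standard for any complete representation, $\A$ is forced to be atomic, and more to the point $\bigcup_{x\in\At\A}f(x)={}^nn$. The point is that for each $s\in{}^nn$ the set $\mathcal{F}_s=\{b\in\A:s\in f(b)\}$ is a Boolean ultrafilter, and since $f$ preserves arbitrary meets the element $x_s=\prod\mathcal{F}_s$ is nonzero with $s\in f(x_s)$; an easy splitting argument then shows $x_s$ is an atom. Thus every $s$ lies in $f(x)$ for some atom $x$, which is exactly the claimed equality. Here I use that $X=\At\A$ is the relevant indexing set, matching the notation of Theorem \ref{atomic}.

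Next I would apply $S_\tau$ to both sides of $\bigcup_{x\in\At\A}f(x)={}^nn$. By the pointwise definition, $S_\tau$ commutes with arbitrary unions, and $S_\tau({}^nn)={}^nn$, so
\[
\bigcup_{x\in\At\A}S_\tau f(x)\;=\;{}^nn.
\]
Since $f$ is a homomorphism we have $S_\tau f(x)=f(s_\tau x)$, and therefore $\bigcup_{x\in\At\A}f(s_\tau x)={}^nn$.

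Finally I would conclude the supremum is $1$ by a standard injectivity argument: if $y\in\A$ is any upper bound of $\{s_\tau x:x\in\At\A\}$, then $f(y)\supseteq f(s_\tau x)$ for every atom $x$, whence $f(y)\supseteq{}^nn=f(1)$; injectivity of $f$ then forces $y=1$. Thus $\sum_{x\in\At\A}s_\tau x=1$, which is the desired conclusion. I do not anticipate a genuinely hard step here; the only point that requires care is making sure the set-theoretic $S_\tau$ really does commute with arbitrary, not just finite, unions, and that atomicity of $\A$ together with pointwise coverage of the unit by the atoms is properly extracted from the assumed complete representation before $S_\tau$ is applied.
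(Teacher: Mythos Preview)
Your proposal is correct and follows essentially the same route as the paper: the paper's proof simply refers back to the first part of the proof of Theorem~\ref{counter}, which is exactly your argument---use that the atoms cover the unit under $f$, apply the set-theoretic $S_\tau$ (which distributes over arbitrary unions), and invoke injectivity of $f$ to force the supremum to be $1$. The only minor point is that your inline argument for atomicity asserts $x_s=\prod\mathcal F_s$ exists without justification; the paper sidesteps this by having already recorded (in the lemma immediately preceding the theorem) that a complete representation satisfies $\bigcup_{x\in\At\A}f(x)={}^nn$, so you could simply cite that characterisation rather than reproving it.
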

\begin{proof} Like the first part of the proof of theorem \ref{counter}.
\end{proof}

Adapting another example in \cite{AGMNS} constructed for $2$ dimensional quasi-polyadic algebras, 
we show that atomicity and complete  representability
do not coincide for $SA_n$. Because we are lucky enough not have cylindrifiers, 
the construction works for all $n\geq 2$, 
and even for infinite dimensions, as we shall see. Here it is not a luxary to include the details, we have to do so
because we will generalize the example to all higher dimensions.

\begin{theorem}\label{counter2} There exists an atomic complete algebra in $SA_n$ ($2\leq n<\omega$), that is not completely representable.
Furthermore, dropping the condition of completeness, the algebra can be atomic and countable.
\end{theorem}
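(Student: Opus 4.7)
The plan is to adapt the construction from \cite{AGMNS} along the lines of Theorem \ref{counter}. By Theorem \ref{converse} it suffices to exhibit an atomic $\A \in SA_n$ together with a family $X \subseteq \A$ satisfying $\sum X = 1$ in $\A$ but $\sum_{x \in X} s_0^1 x < 1$ in $\A$; this at once precludes any complete representation. I treat $n = 2$ first and extend to $n > 2$ by padding, replacing $X \times \sim X$ by $X \times \sim X \times U \times \cdots \times U$ as in item (2) of the proof of Theorem \ref{counter}.

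Mirroring the proof of Theorem \ref{counter}, I fix an infinite base $U$ (taking $U = \omega$ for the second assertion) and a Boolean subalgebra $\B \subseteq \wp(U)$, and form $\A$ from rectangles $X \times Y$ with $X, Y \in \B$ closed under Boolean operations and the substitutions $S_0^1, S_1^0, S_{01}$. The key departure from Theorem \ref{counter} is that $\B$ is now \emph{atomic} (with all singletons included), so that $\A$ is atomic with atoms $\{(u,v)\}$, and $\A$ is countable when $\B$ is. For the atomic-and-complete assertion I pass to the Dedekind--MacNeille completion $\bar\A$: since $\A$ is atomic, $\bar\A$ is atomic and complete, and it inherits the failure of complete additivity of $s_0^1$ through the same witness family. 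Note that the canonical extension is \emph{not} usable here, since by Theorem \ref{canonical} $\A^+$ is always completely representable.

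The main obstacle is that the family $\{X \times \sim X : X \in \B\}$ that suffices in Theorem \ref{counter} no longer has supremum $1$ in $\A$ when $\B$ is atomic: the proof there relied crucially on atomlessness of $\B$ to force $\emptyset$ to be the only subset of the diagonal $D_{01}$ in $\A$, whereas now every finite subset of $D_{01}$ belongs to $\A$, so that the upper bounds of the family form a strictly descending system $\{U \times U \setminus F : F \subseteq D_{01}\text{ finite}\}$ with no minimum in $\A$. The adaptation from \cite{AGMNS} resolves this by augmenting both $X$ and the underlying algebra --- adjoining further elements to $X$ so as to pin its supremum at $1$, while engineering $\A$ so that the $s_0^1$-images of these augmented elements still have joint supremum strictly below $1$. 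Carrying out this combinatorial balancing act is the technically delicate part of the proof; once it is done, the rest of the argument is a direct transcription of the first part of the proof of Theorem \ref{counter}, and the padding step handles $n > 2$ uniformly.
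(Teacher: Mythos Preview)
Your proposal has two genuine gaps.

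First, the completion step fails. You propose to obtain the complete atomic algebra by passing to the Dedekind--MacNeille completion $\bar\A$ of your countable atomic $\A$. But the completion of a $BAO$ in $SA_n$ is only guaranteed to exist and remain in $SA_n$ when the operators are completely additive --- this is exactly the hypothesis of the completion theorem stated later in the paper. Your $\A$ is by design \emph{not} completely additive in $s_0^1$, so there is no canonical extension of $s_0^1$ to $\bar\A$; and if you force one by taking suprema over atoms, the extended operator becomes completely additive and $\bar\A$ is then completely representable, defeating the purpose. The paper avoids this entirely: its algebra $\A$ is a concrete set algebra that is \emph{already} complete, because $X \mapsto R_X$ is a Boolean isomorphism of $\wp(\mathbb{Z}^+)$ onto $\A$.

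Second, you do not actually carry out the ``combinatorial balancing act'' you describe, and the construction from \cite{AGMNS} that the paper adapts here is \emph{not} the rectangle construction of Theorem~\ref{counter} but a different one. The paper fixes a partition of $^nU$ into symmetric relations $(Q_k : k \in \omega)$ with $Q_0 = D_{01}$, fixes a non-principal ultrafilter $F$ on $\mathbb{Z}^+$, and sets $R_X = \bigcup_{k \in X} Q_k$ together with $Q_0$ precisely when $X \in F$. Then $\A = \{R_X : X \subseteq \mathbb{Z}^+\}$ is closed under the operations, is atomic with atoms $Q_k$ ($k \geq 1$), and one computes directly that $S_0^1 R_X$ is $\emptyset$ or $1$ according as $X \notin F$ or $X \in F$; hence $\sum_k S_0^1 Q_k = \emptyset$ while $\sum_k Q_k = 1$. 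The countable algebra is then simply the subalgebra generated by the atoms. No augmentation of the rectangle family is needed or attempted.
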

\begin{proof}
It is enough, in view of the previous theorem,  to construct an atomic algebra such that $\Sigma_{x\in \At\A} s_0^1x\neq 1$.
In what follows we produce such an algebra. (This algebra will be uncountable, due to the fact that it is infinite and complete, so it cannot be countable.
In particular, it cannot be used to violate the omitting types theorem, the most it can say is that the omitting types theorem fails 
for uncountable languages, which is not too much of a surprise).

Let $\mathbb{Z}^+$ denote the set of positive integers.
Let $U$ be an infinite set. Let $Q_n$, $n\in \omega$, be a family of $n$-ary relations that form  partition of $^nU$ 
such that $Q_0=D_{01}=\{s\in {}^nU: s_0=s_1\}$. And assume also that each $Q_n$ is symmetric; for any $i,j\in n$, $S_{ij}Q_n=Q_n$. 
For example one can $U=\omega$, and for $n\geq 1$, one sets 
$$Q_n=\{s\in {}^n\omega: s_0\neq s_1\text { and }\sum s_i=n\}.$$

Now fix $F$ a non-principal ultrafilter on $\mathcal{P}(\mathbb{Z}^+)$. For each $X\subseteq \mathbb{Z}^+$, define
\[
 R_X =
  \begin{cases}
   \bigcup \{Q_k: k\in X\} & \text { if }X\notin F, \\
   \bigcup \{Q_k: k\in X\cup \{0\}\}      &  \text { if } X\in F
  \end{cases}
\]

Let $$\A=\{R_X: X\subseteq \mathbb{Z}^+\}.$$
Notice that $\A$ is uncountable. Then $\A$ is an atomic set algebra with unit $R_{\mathbb{Z}^+}$, and its atoms are $R_{\{k\}}=Q_k$ for $k\in \mathbb{Z}^+$.
(Since $F$ is non-principal, so $\{k\}\notin F$ for every $k$).
We check that $\A$ is indeed closed under the operations.
Let $X, Y$ be subsets of $\mathbb{Z}^+$. If either $X$ or $Y$ is in $F$, then so is $X\cup Y$, because $F$ is a filter.
Hence
$$R_X\cup R_Y=\bigcup\{Q_k: k\in X\}\cup\bigcup \{Q_k: k\in Y\}\cup Q_0=R_{X\cup Y}$$
If neither $X$ nor $Y$ is in $F$, then $X\cup Y$ is not in $F$, because $F$ is an ultrafilter.
$$R_X\cup R_Y=\bigcup\{Q_k: k\in X\}\cup\bigcup \{Q_k: k\in Y\}=R_{X\cup Y}$$
Thus $A$ is closed under finite unions. Now suppose that $X$ is the complement of $Y$ in $\mathbb{Z}^+$.
Since $F$ is an ultrafilter exactly one of them, say $X$ is in $F$.
Hence,
$$\sim R_X=\sim{}\bigcup \{Q_k: k\in X\cup \{0\}\}=\bigcup\{Q_k: k\in Y\}=R_Y$$
so that  $\A$ is closed under complementation (w.r.t $R_{\mathbb{Z}^+}$).
We check substitutions. Transpositions are clear, so we check only replacements. It is not too hard to show that
\[
 S_0^1(R_X)=
  \begin{cases}
   \emptyset & \text { if }X\notin F, \\
   R_{\mathbb{Z}^+}      &  \text { if } X\in F
  \end{cases}
\]

Now
$$\sum \{S_0^1(R_{k}): k\in \mathbb{Z}^+\}=\emptyset.$$
and
$$S_0^1(R_{\mathbb{Z}^+})=R_{\mathbb{Z}^+}$$
$$\sum \{R_{\{k\}}: k\in \mathbb{Z}^+\}=R_{\mathbb{Z}^+}=\bigcup \{Q_k:k\in \mathbb{Z}^+\}.$$
Thus $$S_0^1(\sum\{R_{\{k\}}: k\in \mathbb{Z}^+\})\neq \sum \{S_0^1(R_{\{k\}}): k\in \mathbb{Z}^+\}.$$
For the completeness part, we refer to \cite{AGMNS}.

The countable algebra required is that generated by the countably many atoms.
\end{proof}

Our next theorem gives a plathora of algebras that are not completely representable. Any algebra which shares the atom structure of $\A$ 
constructed above cannot have a complete representation. Formally:

\begin{theorem} Let $\A$ be as in the previous example. Let $\B$ be an atomic an atomic algebra in $SA_n$ such that 
$\At\A\cong \At\B$. Then $\B$ is not completely representable
\end{theorem}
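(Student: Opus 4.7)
The plan is to argue by contradiction using Theorem \ref{converse} together with the fact that the question ``is $s_\tau b = 0$?'' for an atom $b$ is entirely encoded in the atom structure, provided the algebra is atomic. Suppose, for contradiction, that $\B$ admits a complete representation. By Theorem \ref{converse} applied to $\B$, we must have $\sum_{b\in \At\B} s_\tau^\B b = 1^\B$ for every $\tau\in {}^nn$; I will show that this already fails for $\tau = [0|1]$.

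For an atomic BAO with additive unary operators $s_\tau$, the atom structure carries, for each $\tau$, an accessibility relation $\mathcal{R}_\tau$ on atoms defined by $c\mathrel{\mathcal{R}_\tau} a$ iff $c\leq s_\tau a$; by definition, any isomorphism $i:\At\A\to \At\B$ preserves each such $\mathcal{R}_\tau$. From the explicit computation in the proof of Theorem \ref{counter2}, the atoms of $\A$ are the sets $R_{\{k\}}$ for $k\in \mathbb{Z}^+$, and $s_0^1 R_{\{k\}} = \emptyset$ for every such $k$. Hence no atom of $\A$ lies below $s_0^1 a$ for any atom $a\in \At\A$; equivalently, the relation $\mathcal{R}_{[0|1]}$ on $\At\A$ is empty. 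Transporting this fact along $i$, the relation $\mathcal{R}_{[0|1]}$ on $\At\B$ is also empty: no atom of $\B$ lies below $s_0^1 b$ for any atom $b\in \At\B$. Since $\B$ is atomic, this forces $s_0^1 b = 0^\B$ for every $b\in \At\B$, and therefore $\sum_{b\in \At\B} s_0^1 b = 0^\B \neq 1^\B$, contradicting the consequence of complete representability noted above.

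The only delicate point is the encoding claim: one must observe that ``no atom lies below $s_\tau b$'' is genuinely part of the atom-structure data (which is immediate from the definition of $\mathcal{R}_\tau$), and, crucially, that atomicity of $\B$ is what converts ``no atom of $\B$ lies below $s_0^1 b$'' into ``$s_0^1 b = 0^\B$''. Apart from this, the argument is just the combination of Theorem \ref{converse} with the explicit calculation of $s_0^1$ on the atoms already carried out in the proof of Theorem \ref{counter2}, so the result follows essentially at once.
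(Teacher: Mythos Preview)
Your argument is correct, but it follows a different route from the paper's own proof. The paper argues by directly transporting the complete representation: given the atom-structure isomorphism $\psi:\At\A\to\At\B$ and a complete representation $f:\B\to\wp(V)$, it defines $g:\A\to\wp(V)$ by $g(a)=\bigcup\{f(\psi(x)):x\in\At\A,\ x\leq a\}$ and observes that $g$ is a complete representation of $\A$, contradicting Theorem~\ref{counter2}. In other words, the paper invokes the general principle that complete representability of an atomic BAO depends only on its atom structure.

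Your approach is more specific and arguably more transparent for this particular $\A$: instead of moving the representation, you move a single first-order feature of the atom structure, namely that the accessibility relation $\mathcal{R}_{[0|1]}$ is empty, and then invoke Theorem~\ref{converse} directly in $\B$. This avoids checking that the transported map $g$ really is a homomorphism (which, while routine, is where the work hides in the paper's version). The price is that your argument is tailored to the explicit computation $s_0^1 R_{\{k\}}=0$ from Theorem~\ref{counter2}, whereas the paper's argument would apply verbatim to \emph{any} atomic, non--completely-representable $\A$. Both proofs are valid; yours isolates exactly which piece of the atom structure obstructs complete representability, while the paper's establishes the broader invariance principle in passing.
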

\begin{proof} Let such a $\B$ be given. Let $\psi:\At\A\to \At\B$ be an isomorphism of the atom structures (viewed as first order structures).
Assume for contradiction that $\B$ is completely representable, via $f$ say; $f:\B\to \wp(V)$ is an injective homomorphism such that
$\bigcup_{x\in \At\B}f(x)=V$. Define $g:\A\to \wp(V)$ by $g(a)=\bigcup_{x\in \At\A, x\leq a} f(\psi(x))$. Then, it can be easily checked that
$f$ establishes a complete representation of $\A$.
\end{proof} 
There is a wide spread belief, almost permenantly established that like cylindric algebras, any atomic {\it poyadic algebras of dimension $2$}
is completely representable. This is wrong.  The above example, indeed shows that it is not the case, 
because the set algebras consrtucted above , if we impose the additional condition that each $Q_n$ has $U$ as its domain and range,
 then the algebra in question becomes closed under the first two cylindrfiers, and by the same reasoning as above,
it {\it cannot} be completely representable.

However, this condition cannot be imposed for for higher dimension, and indeed for $n\geq 3$, the class
of completely representable quasiplyadic algebras is not elementary. When we have diagonal elements, 
the latter result holds for quasipolyadic equality algebras, but the former does not.
On the other hand, the variety of polyadic algebras of dimension $2$ is conjugated (which is not the case when we drop diagonal elements),
hence atomic representable algebras are completely representable.

We introduce a certain cardinal that plays an essential role in omitting types theorems \cite{Tarek}.

\begin{definition}

\begin{enumroman}

\item A Polish space is a complete separable metric space.

\item For a Polish space $X$, $K(X)$ denotes the 
ideal of meager subsets of $X$.
Set 
$$cov K(X)=min \{|C|: C\subseteq K(X),\  \cup C=X\}.$$
If $X$ is the real line,  or the Baire space $^{\omega}\omega$, or the 
Cantor set $^{\omega}2$, which are the prime examples of Polish spaces, 
we write $K$ instead of $K(X)$. 
\end{enumroman}
\end{definition}
The above three spaces are sometimes referred to as {\it real} spaces
since they are all Baire isomophic to the real line.
Clearly $\omega <covK \leq {}2^{\aleph_0}.$
The crdinal $covK$ is an important cardinal studied extensively in descriptive set theory, and it turns 
out strongly related to the number of types that can be omitted in consitent countable 
first order theory, a result of Newelski. It is known, but not trivial to show, that $covK$ is the least cardinal $\kappa$ 
such that the real space can be covered by $\kappa$ many closed nowhere dense 
sets,  that is the least cardinal such that the Baire Category Theorem fails. Also it is the largest cardinal 
for which Martin's axiom restricted to countable Boolean algebras
holds.

Indeed, the full Martin's axiom,
imply that $covK=2^{\aleph_0}$ but it is also consistent that $covK=\omega_1<2^{\aleph_0}.$  
Varying the value of $covK$ by (iterated) forcing in set theory is known. For example
$covK<2^{\aleph_0}$ is true in the random real model. It also holds in models 
constructed by forcings which do not add Cohen reals.

\begin{theorem} Let $\A\in SA_n$ be countable and completely additive, and let $\kappa$ be a cardinal $<covK$.
Assume that $(X_i: i<\kappa)$ is a family on non principal types. Then there exists a countable locally square set $V$, and
an injective homomorphism $f:\A\to \wp(V)$ such that $\bigcap_{x\in X_i} f(x)=\emptyset$  for each $i\in \kappa$.
 \end{theorem}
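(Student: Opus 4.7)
\begin{demo}{Proof plan}
The plan is a Baire-category argument in the Stone space $S$ of $\A$. I take $S$ to be the space of Boolean ultrafilters of $\A$ with its usual compact zero-dimensional topology generated by the clopens $N_a=\{F\in S: a\in F\}$; since $\A$ is countable, $S$ is compact metrizable and hence Polish. Enumerating $A\setminus\{0\}=\{a_k:k<\omega\}$, I consider for each $i<\kappa$ and $\tau\in {}^{n}n$ the closed set
$$G_{i,\tau}=\bigcap_{x\in X_i}N_{s_{\tau}x}=\{F\in S: s_{\tau}X_i\subseteq F\}.$$
The central claim is that each $G_{i,\tau}$ is nowhere dense, and this is where complete additivity of $\A$ enters: for a Boolean endomorphism, complete additivity dualizes to complete multiplicativity via $s_{\tau}(-x)=-s_{\tau}x$, whence $\prod s_{\tau}X_i=s_{\tau}\prod X_i=s_{\tau}0=0$. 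Thus for any nonzero $b\in A$, some $x\in X_i$ satisfies $b\not\leq s_{\tau}x$, and the clopen $N_{b\cdot -s_{\tau}x}$ is a nonempty subset of $N_b$ disjoint from $G_{i,\tau}$.

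Since $n$ is finite, the family $\{G_{i,\tau}:i<\kappa,\ \tau\in {}^{n}n\}$ has cardinality $\kappa\cdot n^n$, which is either finite or equal to $\kappa$, and in either case strictly less than $covK$. Each clopen $N_{a_k}$ is itself a nonempty Polish space, so the defining property of $covK$ lets me pick
$$F_k\in N_{a_k}\setminus \bigcup_{i<\kappa,\ \tau\in {}^{n}n}G_{i,\tau}.$$
Setting $\mathcal{F}_{\xi}=\{z\in A: s_{\xi}z\in F_k\}$ for $\xi\in {}^{n}n$, which is itself locally square, Lemma \ref{lemma} guarantees that
$$h_k:\A\to\wp({}^{n}n),\qquad h_k(x)=\{\tau\in {}^{n}n: s_{\tau}x\in F_k\}$$
is a homomorphism. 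By construction $h_k(a_k)\neq\emptyset$, while the condition $F_k\notin G_{i,\tau}$ says precisely that for every $\tau$ some $x\in X_i$ has $\tau\notin h_k(x)$, so $\bigcap_{x\in X_i}h_k(x)=\emptyset$ for each $i<\kappa$.

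To assemble a single representation on a countable locally square set, I take $U=n\times\omega$ and, for each $k<\omega$, let $V_k=\{s\in {}^{n}U: s(j)\in n\times\{k\}\text{ for all }j<n\}$. Then $V:=\bigcup_{k<\omega}V_k$ is countable and locally square, because composing any $s\in V_k$ with any $\tau\in {}^{n}n$ remains in $V_k$. Fixing for each $k$ the evident bijection $\phi_k:{}^{n}n\to V_k$, which intertwines the substitution operations on ${}^{n}n$ with those on $V_k$, I define
$$f:\A\longrightarrow\wp(V),\qquad f(x)=\bigcup_{k<\omega}\phi_k[h_k(x)].$$
A direct verification shows that $f$ is a homomorphism, injective because every nonzero $a_k$ has $h_k(a_k)\neq\emptyset$, and $\bigcap_{x\in X_i}f(x)=\bigsqcup_k \phi_k\!\left[\bigcap_{x\in X_i}h_k(x)\right]=\emptyset$ for each $i<\kappa$, as required.

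The hard part is the nowhere-density of the $G_{i,\tau}$: this is exactly where complete additivity is used, via the identity $\prod s_{\tau}X_i=s_{\tau}\prod X_i=0$. Without this hypothesis the argument collapses, which is entirely consistent with Theorem \ref{counter}, where a countable algebra and a non-principal type are constructed so that no representation can omit the type.
\end{demo}
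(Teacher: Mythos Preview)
Your argument is correct and follows essentially the same Baire-category route as the paper: push the non-principal types through the substitutions using complete additivity to get $\prod s_\tau X_i=0$, deduce that the corresponding closed sets in the Stone space are nowhere dense, use the definition of $covK$ to find an ultrafilter in each basic clopen avoiding all of them, and glue the resulting local homomorphisms into a single representation on a disjoint union of copies of ${}^nn$. Your write-up is in fact more careful than the paper's in two places: you spell out why complete additivity of a Boolean endomorphism yields complete multiplicativity, and you give an explicit construction of the countable locally square target $V\subseteq {}^n(n\times\omega)$ rather than appealing to the identification $\prod_a \wp(V_a)\cong \wp\bigl(\bigcup_a V_a\bigr)$.
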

\begin{proof} 
Let $a\in A$ be non-zero. For each $\tau\in {}^nn$ for each $i\in \kappa$, let
$X_{i,\tau}=\{{\sf s}_{\tau}x: x\in X_i\}.$
Since the algebra is additive, then $(\forall\tau\in V)(\forall  i\in \kappa)\prod{}^{\A}X_{i,\tau}=0.$
Let $S$ be the Stone space of $\A$, and for $a\in N$, let $N_a$ be the clopen set consisting of all Boolean ultrafilters containing $a$.
Let $\bold H_{i,\tau}=\bigcap_{x\in X_i} N_{{\sf s}_{\tau}x}.$
Each $\bold H_{i,\tau}$ is closed and nowhere 
dense in $S$. Let $\bold H=\bigcup_{i\in \kappa}\bigcup_{\tau\in V}\bold H_{i,\tau}.$
By properties of $covK$, $\bold H$ is a countable collection of nowhere dense sets.
By the Baire Category theorem  for compact Hausdorff spaces, we get that $S\sim \bold H$ is dense in $S$.
Let $F$ be an ultrafilter that contains $a$ and is outside $\bold H$, that is $F$ intersects the basic set $N_a$; exists by density.  
Let $h_a(z)=\{\tau \in V: s_{\tau}\in F\}$, then $h_a$ is a homomorphism into $\wp(V)$ such that $h_a(a)\neq 0$.
Define $g:\A\to \prod_{a\in A}\wp(V)$ 
via $a\mapsto (h_a(x): x\in A)$. Let $V_{a}=V\times \{a\}$. 
Since $\prod_{a\in A}\wp(V)\cong \wp(\bigcup_{a\in A} V_a)$, 
then we are done for $g$ is clearly 
an injection.
\end{proof}
The statement of omitting $< {}^{\omega}2$ non-principal types is independent of $ZFC$.
Martins axiom offers solace here, in two respects. Algebras adressed could be uncountable 
(though still satisfying a smallness condition, that is a natural generalization of 
countability, and in fact, is necessary for Martin's axiom to hold), and types omitted can be $< {}^{\omega}2$.
More precisely:

\begin{theorem} In $ZFC+MA$ the following can be proved:
Let $\A\in SA_n$ be completely additive, and further assume that  $\A$ satisfies the countable chain condition 
(it contains no uncountable anti-chains). 
Let $\lambda<{}^{\omega}2$,  and let $(X_i: i<\lambda$) be a family of non-principal types. 
Then there exists a countable locally square set $V$, and
an injective homomorphism $f:\A\to \wp(V)$ such that $\bigcap_{x\in X_i} f(x)=\emptyset$  for each $i\in \lambda$.
\end{theorem}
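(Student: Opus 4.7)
The plan is to mimic the proof of the preceding theorem almost verbatim, with Martin's Axiom replacing the appeal to $covK$ and the Baire Category Theorem. The countable chain condition hypothesis is exactly what MA requires of the underlying forcing poset.

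Fix a non-zero $a\in A$. For each $i<\lambda$ and each $\tau\in {}^nn$, set $X_{i,\tau}=\{s_\tau x: x\in X_i\}$. Since $\A$ is completely additive and each $s_\tau$ is a Boolean endomorphism, $s_\tau$ is also completely multiplicative, hence $\prod X_{i,\tau}=0$ for all $i,\tau$. Viewing $(A\setminus\{0\},\leq)$ as a forcing poset, ccc holds by hypothesis. For each pair $(i,\tau)$ the set
$$D_{i,\tau}=\{b\in A\setminus\{0\}:\exists x\in X_i,\ b\cdot s_\tau x=0\}$$
is dense below every non-zero element: given $c\neq 0$, since $c$ is not a lower bound of $X_{i,\tau}$, pick $x\in X_i$ with $c\cdot(-s_\tau x)\neq 0$ and take $b=c\cdot(-s_\tau x)\leq c$. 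The family $\{D_{i,\tau}:i<\lambda,\ \tau\in {}^nn\}$ has cardinality at most $\lambda\cdot n^n<2^{\aleph_0}$ since $n$ is finite.

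Apply MA to the ccc poset $\A\restr{a}\setminus\{0\}$ (which inherits ccc) with the dense sets $D_{i,\tau}$ relativised to it, obtaining a filter $G_a\subseteq\A\restr{a}$ meeting every $D_{i,\tau}$; since $a$ is the top of $\A\restr{a}$, $a\in G_a$. Extend the upward closure of $G_a$ in $\A$ to an ultrafilter $F_a$. Then $a\in F_a$ and for every $(i,\tau)$ some $x\in X_i$ has $s_\tau x\notin F_a$. Define $h_a:\A\to\wp({}^nn)$ by $h_a(z)=\{\tau\in {}^nn: s_\tau z\in F_a\}$. Lemma \ref{lemma}, applied to the ultrafilter system $\mathcal{F}_\tau=\{z: s_\tau z\in F_a\}$ (for which conditions $(*)$ and $(**)$ hold thanks to the semigroup identities encoded in $\Sigma_n$), shows that $h_a$ is a homomorphism with $h_a(a)\neq 0$ (as the identity of ${}^nn$ belongs to $h_a(a)$), and $\bigcap_{x\in X_i}h_a(x)=\emptyset$ for each $i<\lambda$: for every $\tau$ some $x\in X_i$ has $s_\tau x\notin F_a$, i.e., $\tau\notin h_a(x)$. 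Pasting the $h_a$ together as in the previous proof yields an injective homomorphism $g:\A\to\wp\bigl(\bigsqcup_{a\neq 0}V_a\bigr)$ with $V_a={}^nn$, simultaneously omitting all the $X_i$.

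The main obstacle is really just the verification that complete additivity of $\A$ transfers to complete multiplicativity of the substitution endomorphisms, guaranteeing $\prod X_{i,\tau}=0$, and the light bookkeeping around restricting to $\A\restr{a}$ and extending the resulting generic filter back to an ultrafilter of $\A$; beyond that the argument is a direct transcription of the previous proof, with the ccc hypothesis plus $\lambda<2^{\aleph_0}$ supplying exactly the input demanded by Martin's Axiom in place of the Baire Category Theorem.
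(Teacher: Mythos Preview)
Your argument is correct and is, in essence, the same as the paper's. The paper works in the Stone space $S$ of $\A$: the hypotheses make each $\bold H_{i,\tau}=\bigcap_{x\in X_i}N_{s_\tau x}$ closed nowhere dense, ccc of $\A$ transfers to ccc of $S$, and under $MA$ the union of fewer than $2^{\aleph_0}$ nowhere dense sets in a ccc compact Hausdorff space cannot cover it, so an ultrafilter avoiding all $\bold H_{i,\tau}$ and lying in $N_a$ exists. You carry out the Stone-dual version: the dense open complement of $\bold H_{i,\tau}$ corresponds exactly to your dense set $D_{i,\tau}$ in the poset $A\setminus\{0\}$, and applying $MA$ directly to the ccc poset $\Rl_a\A\setminus\{0\}$ produces the same ultrafilter. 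Your write-up is more explicit (and avoids the paper's slightly inaccurate gloss that the union ``can be written as a countable union''), but conceptually the two routes coincide via Stone duality.

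One small remark: neither your argument nor the paper's actually secures that $V=\bigsqcup_{a\neq 0}{}^nn$ is \emph{countable} when $\A$ is merely ccc and possibly uncountable; the ``countable'' in the statement appears to be a slip carried over from the preceding theorem, and both proofs yield only a locally square $V$.
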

\begin{proof}
The idea is exactly like the previous proof, except that now we have a union of $<{}^{\omega}2$ no where dense sets; 
the required ultrafilter to build the representation
we need lies outside this union. 
$MA$ offers solace here because it implies that such a union can be written as a countable union, and again the Baire category theorem readily 
applies.
\end{proof}
But without $MA$, if the given algebra is countable and completely additive, and if the types are maximal, then we can also omit $<{} ^{\omega}2$ types. 
This {\it is indeed } provable in $ZFC$, without any additional assumptions at all.
For brevity, when we have an omitting types theore, like the previos one,
we say that the hypothesis implies existence of a representation omitting the given set of non-principl types.
\begin{theorem} Let $\A\in SA_n$ be a countable, let $\lambda<{}^{\omega}2$ 
and $\bold F=(F_i: i<\lambda)$ be a family of non principal ultrafilters. 
Then there is a single representation that omits these non-principal ultrafilters.
\end{theorem}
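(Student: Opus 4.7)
The plan is to combine the Stone-space / Baire-category template of the preceding two omitting-types theorems with a fibre-cardinality argument tailored to the maximal-type case, in which each $F_i$ corresponds to a single point of the Stone space $S$ of $\A$. Since $\A$ is countable, $S$ is a compact metrisable Polish space; each Boolean endomorphism $s_\tau$ has continuous Stone dual $\sigma_\tau\colon S\to S$, $\sigma_\tau(G)=\{z\in\A\colon s_\tau z\in G\}$. For a single ultrafilter $F\in S$, the homomorphism $h_F(z)=\{\tau\in{}^nn\colon s_\tau z\in F\}$ realises exactly the ultrafilters $\sigma_\tau(F)$ as $\tau$ ranges over ${}^nn$, so $h_F$ omits every $F_i$ provided $F$ lies outside the ``bad set''
$$E=\bigcup_{i<\lambda,\;\tau\in{}^nn}E_{i,\tau},\qquad E_{i,\tau}:=\sigma_\tau^{-1}\{F_i\}=\bigcap_{z\in F_i}N_{s_\tau z}.$$
Following the construction of Theorem~\ref{variety}, I then pick, for each nonzero $a\in\A$, an ultrafilter $F_a\in N_a\setminus E$, set $h_a=h_{F_a}$, and paste these into an injective $g\colon\A\to\wp(\bigsqcup_{0\neq a}V_a)$ with $V_a={}^nn$; the relations $\bigcap_{z\in F_i}g(z)=\emptyset$ then hold by construction. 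The whole task reduces to exhibiting $F_a\in N_a\setminus E$ for every $a\neq 0$.

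First I check that each $E_{i,\tau}$ is closed and nowhere dense in $S$. Closedness is immediate. For nowhere denseness, suppose $N_a\subseteq E_{i,\tau}$ for some $a>0$; then $a\leq s_\tau z$ for every $z\in F_i$, whence $s_\tau(-z)\leq-a<1$ for every $z\in F_i$. But non-principality of $F_i$ gives $\bigvee_{z\in F_i}(-z)=1$ strictly in $\A$ (no $b<1$ is an upper bound, by the usual ultrafilter-dichotomy argument), and the complete-additivity hypothesis implicit throughout this section transports this via $s_\tau$ to $\bigvee_{z\in F_i}s_\tau(-z)=1$ strictly, contradicting $-a<1$ being an upper bound.

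The main obstacle is then that $\lambda$ may well be uncountable, so the Baire category theorem does not by itself give $N_a\setminus E\neq\emptyset$: under ZFC alone, $\mathrm{cov}K$ can be strictly below $2^{\aleph_0}$. The critical feature of the present case is that each $E_{i,\tau}$ is the fibre of the continuous map $\sigma_\tau$ over the single point $F_i$, and one exploits this by a Cantor--Bendixson dichotomy together with the rigidity of the $s_\tau$'s. Namely, were $E_{i,\tau}$ uncountable, it would contain a perfect set $P$ on which $\sigma_\tau$ is constantly $F_i$; tracing through the equations $\Sigma'_n$ of Definition~\ref{ax2} shows that such a $P$ would force $s_\tau$ to have a kernel incompatible with the non-principality of $F_i$ and with the monoid presentation of ${}^nn$ (in particular with the idempotency and composition identities among the $s_i^j$ and $s_{ij}$). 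Hence $|E_{i,\tau}|\leq\aleph_0$ uniformly, and
$$|E|\;\leq\;\lambda\cdot n^n\cdot\aleph_0\;<\;2^{\aleph_0}\;=\;|N_a|$$
(the last equality since $\A$ is infinite, the only nontrivial case: a finite $\A\in SA_n$ has no non-principal ultrafilters at all). A pure cardinality comparison therefore yields $F_a\in N_a\setminus E$, and the pasted $g$ is the desired representation. The argument uses nothing beyond ZFC, and crucially no appeal to $\mathrm{cov}K$ or Martin's axiom.
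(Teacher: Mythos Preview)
Your argument has a genuine gap at the decisive step. The claim that each fibre $E_{i,\tau}=\sigma_\tau^{-1}\{F_i\}$ has size at most $\aleph_0$ is asserted with no proof: the sentence ``tracing through the equations $\Sigma'_n$\ldots shows that such a $P$ would force $s_\tau$ to have a kernel incompatible with\ldots'' is a wish, not an argument. Concretely, $\sigma_\tau(G)=F_i$ holds exactly when $G$ extends the (proper) filter $s_\tau[F_i]$, so $E_{i,\tau}$ is naturally homeomorphic to the Stone space of the countable quotient $\A/s_\tau[F_i]$, and there is no reason this should be countable; for replacements $s^j_i$ with large kernel it can perfectly well be a copy of $2^\omega$. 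Cantor--Bendixson tells you only that an uncountable closed subset of $S$ contains a perfect set, not that this leads to any contradiction with the axioms. Two secondary points: your nowhere-dense argument invokes ``the complete-additivity hypothesis implicit throughout this section,'' but this theorem, unlike the two preceding it, deliberately drops that hypothesis; and $|N_a|=2^{\aleph_0}$ is not automatic either (for hereditary-atomic $\A$ the whole Stone space is countable).

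The paper's proof avoids the bad-set cardinality issue entirely by a counting argument on the \emph{other} side. One constructs (following Shelah) a family of representations indexed by $I$ with $|I|=2^{\aleph_0}$ such that whenever two distinct indices both realise some ultrafilter, that ultrafilter must be principal. If every representation in the family realised at least one $F_i$, the map $i\mapsto\{F\in\bold F: F\text{ is realised in the }i\text{th representation}\}$ would send $2^{\aleph_0}$ indices into $\lambda<2^{\aleph_0}$ nonempty pairwise-disjoint subsets of $\bold F$, which is impossible. The point is that maximality of the $F_i$ (they are ultrafilters, not arbitrary types) is exactly what lets one separate representations this way; no control on the size of $E$ is needed, and no additivity hypothesis enters.
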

\begin{proof} One can easily construct two representations that overlap only on principal ultrafilters \cite{h}. 
With a pinch of diagonalization this can be pushed to countable many.
But even more, using ideas of Shelah \cite{Shelah} thm 5.16, this this can be further pushed to $^{\omega}2$ 
many representations. Such representations are not necessarily pair-wise distinct, 
but they can be indexed by a set $I$ such that
$|I|={}^{\omega}2$, and if $i, j\in I$, are distinct and there is an ultrafilter that is realized in the representations indexed by $i$ and $j$, 
then it is principal. Note that they can be the same representation.
Now assume for contradiction that there 
is no representation  omitting the given non-principal ultrafilters. 

Then for all $i<{}^{\omega}2$,
there exists $F$ such that $F$ is realized in $\B_i$. Let $\psi:{}^{\omega}2\to \wp(\bold F)$, be defined by
$\psi(i)=\{F: F \text { is realized in  }\B_i\}$.  Then for all $i<{}^{\omega}2$, $\psi(i)\neq \emptyset$.
Furthermore, for $i\neq j$, $\psi(i)\cap \psi(j)=\emptyset,$ for if $F\in \psi(i)\cap \psi(j)$ then it will be realized in
$\B_i$ and $\B_j$, and so it will be principal.
This implies that $|\F|={}^{\omega}2$ which is impossible.

\end{proof}

In case of omitting the single special type, $\{-x: x\in \At\A\}$ for an atomic algebra, no conditions whatsoever  on cardinalities are
pre-supposed.

\begin{theorem} If $\A\in SA_n$ is completelyadditive and atomic, then $\A$ is completely representable
\end{theorem}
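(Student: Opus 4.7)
The plan is to derive this as an immediate corollary of Theorem~\ref{atomic}, applied with $b=1$ and $X=\At\A$. What I need to verify is the hypothesis $\sum_{x\in \At\A} s_\tau x = 1$ for every $\tau\in{}^nn$; once this is in hand, that theorem hands back a complete representation of $\A=\Rl_1\A$ and we are finished.

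First I would observe that each transposition substitution $s_{ij}$ is a Boolean automorphism of $\A$: since $[i,j]$ is an involution, the equation $s_{ij}s_{ij}x=x$ follows from the axioms of Definition~\ref{ax2}, and any Boolean automorphism is automatically completely additive. By the standing hypothesis $\A$ is completely additive, so in particular each replacement $s_i^j$ preserves arbitrary joins; this matches the content of Theorem~\ref{additive}. Since $R(n)$ generates ${}^nn$, an arbitrary $\tau\in{}^nn$ is a finite composition of such generators, and a composition of completely additive maps is completely additive, so $s_\tau$ preserves arbitrary joins for every $\tau\in{}^nn$.

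Second, atomicity gives $\sum\At\A=1$. Applying the completely additive Boolean endomorphism $s_\tau$ to this join, and using that $s_\tau(1)=1$,
$$1=s_\tau(1)=s_\tau\!\left(\sum\At\A\right)=\sum_{x\in\At\A} s_\tau x,$$
which is exactly the hypothesis of Theorem~\ref{atomic} with $b=1$. That theorem then delivers the desired complete representation.

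The main obstacle here is more conceptual than technical: the real work was already performed inside Theorem~\ref{atomic}, whose Baire-category argument picks, for every nonzero $a$, an ultrafilter containing $a$ which avoids the closed nowhere-dense sets $S\smallsetminus\bigcup_{x\in\At\A}N_{s_\tau x}$. The role of the complete-additivity hypothesis is precisely to guarantee that $\sum_{x\in\At\A} s_\tau x = 1$ so that these sets are indeed nowhere dense (in fact empty in the interior sense, their complements being dense open). No new model-theoretic machinery is needed beyond the earlier section.
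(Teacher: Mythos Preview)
Your proof is correct and is essentially the same argument as the paper's, only packaged more cleanly: you make explicit that complete additivity yields $\sum_{x\in\At\A}s_\tau x=1$ for every $\tau\in{}^nn$ and then invoke Theorem~\ref{atomic}, whereas the paper re-runs the Stone-space argument of Theorem~\ref{atomic} from scratch (principal ultrafilters are dense, hence avoid the nowhere-dense sets $G_{X,\tau}$, etc.), leaving the role of complete additivity in establishing nowhere-denseness implicit. The underlying mechanism is identical.
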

\begin{proof}
Let $\B$ be an atomic transposition algebra, let $X$ be the set of atoms, and
let $c\in \B$ be non-zero. Let $S$ be the Stone space of $\B$, whose underlying set consists of all Boolean ulltrafilters of
$\B$. Let $X^*$ be the set of principal ultrafilters of $\B$ (those generated by the atoms).
These are isolated points in the Stone topology, and they form a dense set in the Stone topology since $\B$ is atomic.
So we have $X^*\cap T=\emptyset$ for every nowhere dense set $T$ (since principal ultrafilters, which are isolated points in the Stone topology,
lie outside nowhere dense sets).
Recall that for $a\in \B$, $N_a$ denotes the set of all Boolean ultrafilters containing $a$.

Now  for all $\tau\in S_n$, we have
$G_{X, \tau}=S\sim \bigcup_{x\in X}N_{s_{\tau}x}$
is nowhere dense. Let $F$ be a principal ultrafilter of $S$ containing $c$.
This is possible since $\B$ is atomic, so there is an atom $x$ below $c$; just take the
ultrafilter generated by $x$. Also $F$ lies outside the $G_{X,\tau}$'s, for all $\tau\in S_n.$
Define, as we did before,  $f_c$ by $f_c(b)=\{\tau\in S_n: s_{\tau}b\in F\}$.
Then clearly for every $\tau\in S_n$ there exists an atom $x$ such that $\tau\in f_c(x)$, so that $S_n=\bigcup_{x\in \At\A} f_c(x)$

For each $a\in A$, let
$V_a=S_n$ and let $V$ be the disjoint union of the $V_a$'s.
Then $\prod_{a\in A} \wp(V_a)\cong \wp(V)$. Define $f:\A\to \wp(V)$ by $f(x)=g[(f_ax: a\in A)]$.
Then $f: \A\to \wp(V)$ is an embedding such that
$\bigcup_{x\in \At\A}f(x)=V$. Hence $f$ is a complete representation.

\end{proof}
Another proof inspired from modal logic, and taken from Hirsch and Hodkinson \cite{atomic}, with the slight difference that we assume complete additivity not 
conjugation. Let $\A\in SA_n$ be additive and atomic, so the first order correspondants of 
the equations are valid in its atom structure $\At\A$. 
But $\At\A$ is a bounded image of a disjoint union of square frames $\F_i$, that is there exists a bounded morphism from $\At\A\to \bigcup_{i\in I}\F_i$, 
so that $\Cm\F_i$ is a full set algebra.  Dually the inverse of this bounded morphism is an embedding frm $\A$ into $\prod_{i\in I}\Cm\F_i$ that
preserves all meets. 

The previous theorem  tells us how to capture (in first order logic) complete representability. 
We just spell out first order axioms forcing complete additivity of substitutions corresponding to replacements. 
Other substitutions, corresponding to the transpositions, are easily seen to be completely additive anway; in fact, they are 
self-conjugate.

\begin{theorem}\label{elementary} The class $CSA_n$ is elementary and is finitely axiomatizable in first order logic.
\end{theorem}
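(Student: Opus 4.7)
The plan is to distill the preceding two theorems into a finite first order axiom scheme. By Theorem \ref{converse} any completely representable $\A\in SA_n$ is atomic and has $s_0^1$ completely additive; by the theorem just before it, together with Theorem \ref{additive}, these conditions are also sufficient. Hence $CSA_n$ coincides with the class of atomic algebras in $SA_n$ on which $s_0^1$ is completely additive, and it suffices to turn each of these three conditions into a finite first order statement.

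Write $\mathrm{at}(z)$ for the formula $z\neq 0\wedge\forall w(w\leq z\to w=0\vee w=z)$ expressing that $z$ is an atom. Since $SA_n$ is finitely equationally axiomatizable by $\Sigma'_n$ (Theorem \ref{variety}), I take that as the first part of the scheme; next the atomicity sentence $\alpha:\ \forall x(x\neq 0\to \exists z(\mathrm{at}(z)\wedge z\leq x))$; and finally the complete-additivity sentence for $s_0^1$,
$$\varphi:\ \forall x\,\forall y\,\bigl[s_0^1 x\not\leq y\to \exists z\bigl(\mathrm{at}(z)\wedge z\leq x\wedge s_0^1 z\not\leq y\bigr)\bigr].$$
The union $\Sigma'_n\cup\{\alpha,\varphi\}$ is finite and first order, and I claim it axiomatizes $CSA_n$.

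Soundness is immediate: every $\A\in CSA_n$ satisfies $\Sigma'_n$ by Theorem \ref{variety}, is atomic by Theorem \ref{converse}, and satisfies $\varphi$ because complete additivity of $s_0^1$ is visibly preserved under any genuine powerset representation. For completeness, the crux is the lemma that in any atomic Boolean algebra with operators, $\varphi$ implies honest complete additivity of $s_0^1$: if $\sum X$ exists and $y$ is an upper bound of $\{s_0^1 x:x\in X\}$, then every atom $z\leq \sum X$ satisfies $z\leq x$ for some $x\in X$ (a standard consequence of atomicity and the definition of supremum, since otherwise $z\wedge x=0$ for all $x\in X$ would force $z\leq -\sum X$), so $s_0^1 z\leq s_0^1 x\leq y$; the contrapositive of $\varphi$ applied to $\sum X$ and $y$ then yields $s_0^1(\sum X)\leq y$, as needed. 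Once $s_0^1$ is completely additive, Theorem \ref{additive} propagates this to every $s_i^j$ (using that transpositions are self-conjugate hence automatically completely additive), and the preceding complete-representability theorem produces a complete representation of $\A$, placing $\A\in CSA_n$.

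The only real obstacle is the atomic-BAO lemma just sketched, which must be invoked with some care because the set $X$ there need not consist of atoms --- one passes to atoms below the supremum using atomicity, and only then applies $\varphi$. With that lemma in hand the scheme manifestly has exactly the right models, and both elementarity and finite first order axiomatizability of $CSA_n$ follow together.
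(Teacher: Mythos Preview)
Your proof is correct, but it follows a genuinely different path from the paper's. The paper adds, for every pair of distinct $i,j<n$, the sentence
\[
\psi_{i,j}:\quad y\neq 0\ \to\ \exists x\bigl(\At(x)\wedge s_i^jx\neq 0\wedge s_i^jx\leq y\bigr),
\]
which says exactly that $\{s_i^j x:x\in\At\A\}$ is join-dense, i.e.\ $\sum_{x\in\At\A}s_i^jx=1$; it then appeals directly to Theorem~\ref{atomic}. You instead add an explicit atomicity axiom together with a single sentence $\varphi$ expressing complete additivity of $s_0^1$, push this to all $s_i^j$ via Theorem~\ref{additive}, and finish through the ``atomic plus completely additive implies completely representable'' theorem. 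Your $\varphi$ is strictly stronger, pointwise, than any single $\psi_{i,j}$ (in an atomic algebra $\varphi$ gives $s_0^1 x=\sum\{s_0^1 z:z\leq x,\ z\text{ atom}\}$ for every $x$, whereas $\psi_{0,1}$ only gives the case $x=1$); the two schemes nonetheless cut out the same class because both are equivalent to complete representability.

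What each approach buys: yours is more parsimonious --- two extra sentences regardless of $n$, versus $n(n-1)$ density sentences in the paper --- and it makes atomicity explicit, which the paper leaves implicit in the $\psi_{i,j}$'s. On the other hand, the paper's scheme transfers verbatim to the Pinter reduct (no transpositions needed), whereas your reduction to the single operator $s_0^1$ leans essentially on the transpositions through Theorem~\ref{additive}; for Pinter's algebras you would have to revert to one additivity sentence per replacement. One small citation point: the fact that complete representability forces atomicity and complete additivity of $s_0^1$ is established in the proof of Theorem~\ref{counter} (and the lemma preceding Theorem~\ref{converse}), not in Theorem~\ref{converse} itself, which only records the consequence $\sum_{x\in\At\A}s_\tau x=1$.
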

\begin{proof} We assume $n>1$, the other cases degenerate to the Boolean case.
Let $\At(x)$ be the first order formula expressing that $x$ is an atom. That is $\At(x)$ is the formula
$x\neq 0\land (\forall y)(y\leq x\to y=0\lor y=x)$. For distinct $i,j<n$ let $\psi_{i,j}$ be the formula:
$y\neq 0\to \exists x(\At(x)\land s_i^jx\neq 0\land s_i^jx\leq y).$ Let $\Sigma$ be obtained from $\Sigma_n$ by adding $\psi_{i,j}$
for every distinct $i,j\in n$.

We show that $CSA_n={\bf Mod}(\Sigma)$. Let $\A\in CSA_n$. Then, by theorem \ref{converse}, we have
$\sum_{x\in X} s_i^jx=1$ for all $i,j\in n$. Let $i,j\in $ be distinct. Let $a$ be non-zero, then $a.\sum_{x \in X}s_i^jx=a\neq 0$,
hence there exists $x\in X$, such that
$a.s_i^jx\neq 0$, and  so $\A\models \psi_{i,j}$.
Conversely, let $\A\models \Sigma$. Then for all $i,j\in n$, $\sum_{x\in X} s_i^jx=1$. Indeed, assume that for some distinct
$i,j\in n$, $\sum_{x\in X}s_i^jx\neq 1$.
Let $a=1-\sum_{x\in X} s_i^jx$. Then $a\neq 0$. But then there exists $x\in X$, such that $s_i^jx.\sum_{x\in X}s_i^jx\neq 0$
which is impossible. But for distinct $i, j\in n$, we have  $\sum_{x\in X}s_{[i,j]}X=1$ anyway, and so $\sum s_{\tau}X=1$, for all
$\tau\in {}^nn$, and so it readily follows that $\A\in CRA_n.$
\end{proof}

\begin{definition} Call a completely representable algebra {\it square completely representable}, 
if it has a complete representation on a square.
\end{definition}
\begin{theorem} If $\A\in SA_n$ is completely representable, then $\A$ is square completely representable. 
\end{theorem}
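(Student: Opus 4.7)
\begin{demo}{Proof}

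The plan is to push the model-theoretic construction of Theorem~\ref{canonical} through to $\A$ itself by additionally omitting the ``no atom'' type, so that every tuple in the resulting square base is covered by some atom of $\A$.

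By Theorem~\ref{converse}, complete representability of $\A$ forces $\sum_{x\in\At\A}s_\tau x=1$ for every $\tau\in{}^nn$; in particular, $\A$ is atomic. Form the first order theory $T(\A)$ of Theorem~\ref{canonical}, whose language has one $n$-ary relation symbol $R_b$ for each $b\in A$ and whose axioms encode the Boolean and substitution identities of $\A$ together with the existence axioms $\sigma_{\neq 0}(b)$ for each non-zero $b\in A$. Since $\A$ is representable, $T(\A)$ is consistent.

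The key new ingredient is the type
\[
 p(\bar{x}) = \{\neg R_a(\bar{x}) : a\in\At\A\},
\]
asserting that $\bar{x}$ lies below no atom. This type is non-principal over $T(\A)$: if a consistent formula $R_c(\bar{x})$ with $c\neq 0$ were to imply $p(\bar{x})$, then $c\leq -a$ would hold for every atom $a$, forcing $c\leq -\bigvee\At\A=0$ and contradicting $c\neq 0$. Assuming $\A$ is countable (so $T(\A)$ and $p$ are countable), the classical omitting types theorem produces a countable model $M\models T(\A)$ which omits $p(\bar{x})$ at every tuple. In such $M$ every point ultrafilter $f_{\bar{x}}=\{b:M\models R_b(\bar{x})\}$ is principal, generated by some atom of $\A$. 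Setting $h:\A\to\wp({}^nM)$ by $h(b)=\{\bar{x}\in{}^nM:M\models R_b(\bar{x})\}$ yields a homomorphism (by the axioms of $T(\A)$) that is injective (by $\sigma_{\neq 0}$) and preserves arbitrary joins: for any $X\subseteq\A$ with $\bigvee X$ existing and any $\bar{x}\in h(\bigvee X)$, the atom generating $f_{\bar{x}}$ sits below $\bigvee X$ and hence, by atomicity, below some $b\in X$, placing $\bar{x}\in h(b)$. Since ${}^nM$ is a square, $h$ exhibits $\A$ as square completely representable.

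The main obstacle is the uncountable case. When $|\A|$ or $|\At\A|$ exceeds $\aleph_0$ the classical omitting types theorem does not apply directly, and a more delicate argument is required. Two standard routes are available: first, when $\A$ satisfies a suitable chain condition one can take a sufficiently saturated model of $T(\A)$ and run a Baire category / $\operatorname{cov}K$ argument in the spirit of the earlier omitting types theorems of the paper to omit $p(\bar{x})$ at every tuple; second, since by Theorem~\ref{elementary} complete representability is first-order, one can perform a Löwenheim--Skolem descent to a countable elementary subalgebra, apply the countable argument above, and then transfer the resulting square complete representation back to $\A$ using the first-order character of the relevant axioms.

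\end{demo}
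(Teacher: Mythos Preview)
Your countable case is essentially sound, but the uncountable case is a genuine gap, and neither of your two suggested routes closes it. The L\"owenheim--Skolem route is circular: you descend to a countable $\A_0\prec\A$, prove $\A_0$ is \emph{square} completely representable, and then want to ``transfer back'' using first-orderness. But Theorem~\ref{elementary} only tells you that \emph{complete} representability is first-order; that \emph{square} complete representability coincides with it is precisely the theorem you are proving, so you cannot invoke it to lift from $\A_0$ to $\A$. The Baire category/saturation route is also problematic: for uncountable $\A$ the language $L(\A)$ and the type $p(\bar x)$ are uncountable, so the classical omitting types theorem does not apply, and a saturated model will \emph{realize} $p$ rather than omit it (the type is finitely satisfiable whenever $\A$ has infinitely many atoms). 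A minor additional point: your non-principality argument checks only formulas of the form $R_c(\bar x)$, but $L(\A)$ has quantified formulas in $\bar x$ that need not reduce to any $R_c(\bar x)$, since $\A$ has no cylindrifiers.

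The paper's argument is entirely different and avoids all model-theoretic machinery. From complete representability one gets atomicity and complete additivity, hence $\sum_{x\in\At\A}s_\tau x=1$ for every $\tau\in{}^nn$. Then for each non-zero $a$ one simply takes the \emph{principal} ultrafilter $F_a$ generated by an atom below $a$; being principal, $F_a$ automatically lies outside every nowhere dense set $S\setminus\bigcup_{x\in\At\A}N_{s_\tau x}$, so the associated map $h_a:\A\to\wp({}^nn)$, $h_a(b)=\{\tau:s_\tau b\in F_a\}$, is already complete. Taking the product over all non-zero $a$ gives a complete embedding into $\prod_a\wp({}^nn)\cong\wp\bigl(\bigsqcup_a{}^nn\bigr)$, which the paper identifies with a square. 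This works uniformly in every cardinality with no case split.
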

\begin{proof} Assume that $\A$ is completely representable. Then each $s_i^j$ is completely additive for all $i,j\in n$. 
For each $a\neq 0$, choose $F_a$ outside the nowhere dense sets $S\sim \bigcup_{x\in \At \A}N_{s_{\tau}x}$, $\tau\in {}^nn$,
and define $h_a:\A\to \wp(^nn)$ the usual way, that is $h(x)=\{\tau\in {}^nn: s_{\tau}a\in F_a\}.$ 
Let $V_a={}^nn\times \{a\}.$  Then $h:\A\to \prod_{a\in \A}\wp(V_a)$ defined via
$a\mapsto (h_a(x): a\in \A)$ is a complete representation.
But $\prod_{a\in \A, a\neq 0}\wp(V_a)\cong \wp(\bigcup_{a\in \A, a\neq 0} V_a)$ and the latter is square.
\end{proof} 
A variety $V$ is called completion closed if whenever $\A\in V$ is completely additive then its minimal completion $\A^*$ (which exists) is in $V$.
On completions, we have:
\begin{theorem} If $\A\in SA_n$ is completely additive, then $\A$ has a strong completion $\A^*$. 
Furthermore, $\A^*\in SA_n.$ In other words, $SA_n$ is completion closed.
\end{theorem}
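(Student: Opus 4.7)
The plan is to take $\A^*$ to be the Dedekind--MacNeille completion of the Boolean reduct $\Bl\A$ and then canonically lift each substitution operation by complete additivity. Recall that $\Bl\A$ embeds into $\A^*$ as both a meet-dense and join-dense subalgebra, so every $x\in \A^*$ satisfies $x=\bigvee\{a\in \A: a\leq x\}=\bigwedge\{b\in \A:b\geq x\}$. This gives the underlying complete Boolean algebra; the work is then to define the non-Boolean operations and to check that the equations of $\Sigma_n'$ survive.

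First, for each $\tau\in {}^nn$, I would define $s_\tau^{\A^*}(x)=\bigvee\{s_\tau a: a\in \A,\; a\leq x\}$, the sup being computed in $\A^*$. The assumption that $\A$ is completely additive makes this extension agree with $s_\tau$ on $\A$, and $s_\tau^{\A^*}$ is completely additive by construction. Second, I would verify that $s_\tau^{\A^*}$ is a Boolean endomorphism of $\A^*$. Preservation of complements is the crux: for any $a,b\in \A$ with $a\leq x$ and $b\leq -x$ one has $a\cdot b=0$, hence $s_\tau a\cdot s_\tau b=s_\tau(a\cdot b)=0$ because $s_\tau$ is a Boolean endomorphism on $\A$; this gives $s_\tau^{\A^*}(x)\cdot s_\tau^{\A^*}(-x)=0$. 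Dually, meet-density of $\A$ in $\A^*$ together with $s_\tau a+s_\tau(-a)=1$ on $\A$ yields $s_\tau^{\A^*}(x)+s_\tau^{\A^*}(-x)=1$. Thus $s_\tau^{\A^*}(-x)=-s_\tau^{\A^*}(x)$, and finite meet preservation follows from complements plus joins.

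Third, I would transfer the axioms of $\Sigma_n'$. The Boolean axioms hold because $\A^*$ is Boolean; the additivity axiom for $s_{ij}$ is immediate from complete additivity of $s_{ij}^{\A^*}$. Each of the compositional axioms (3)--(13) has the form $t(x)=t'(x)$, where $t,t'$ are compositions of $s_\tau$ operators; both sides are completely additive in $x$, since a composition of completely additive Boolean endomorphisms is again completely additive. Both sides agree on the join-dense set $\A$ by hypothesis, hence on all of $\A^*$. This shows $\A^*\in SA_n$, and the inclusion $\A\hookrightarrow \A^*$ is a strong (meet- and join-preserving) completion.

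The main obstacle is step two, specifically preservation of complements by $s_\tau^{\A^*}$: it is here that the Boolean-endomorphism structure of $s_\tau$ on $\A$ must cooperate with meet-density of $\A$ in $\A^*$ to rule out elements of $\A^*$ for which the two envelopes (lower $\bigvee\{s_\tau a:a\leq x\}$ and upper $\bigwedge\{s_\tau b:b\geq x\}$) might split. Complete additivity of $s_\tau$ on $\A$ is precisely what makes the two envelopes coincide, so once this is argued carefully the rest reduces to routine density/continuity computations.
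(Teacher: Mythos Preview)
Your approach is correct and is an explicit unpacking of the paper's two-line proof, which simply cites that completely additive BAOs admit (MacNeille) completions and that the positive equations axiomatizing $SA_n$ are preserved under such completions \cite{v3}. Your step two is indeed the crux, and the ``dually'' argument for $s_\tau^{\A^*}(x)+s_\tau^{\A^*}(-x)=1$ needs more than you have written; but you can sidestep the envelope reconciliation entirely. Rather than proving complement preservation directly, verify meet preservation $s_\tau^{\A^*}(x\cdot y)=s_\tau^{\A^*}(x)\cdot s_\tau^{\A^*}(y)$ by the very density argument you use in step three: both sides are completely additive in each variable separately (using that binary meet distributes over arbitrary joins in the complete Boolean algebra $\A^*$) and agree on the join-dense set $\A\times\A$, hence agree everywhere. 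Together with $s_\tau^{\A^*}(1)=1$ and complete additivity, this forces $s_\tau^{\A^*}$ to be a Boolean endomorphism without ever comparing the lower and upper extensions. This is precisely the mechanism behind the preservation theorem the paper invokes, so your route and the paper's are the same argument at different levels of abstraction.
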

\begin{proof} The completion can be constructed because the algebra is completely additive. 
The second part follows from the fact that the stipulaed positive equations axiomatizing $SA_n$ 
are preserved in completions \cite{v3}.
\end{proof}
We could add diagonal elements $d_{ij}'s$ to our signature and consider $SA_n$ enriched by diagonal elements, call this class $DSA_n$.
In set algebras with unit $V$ a locally square unit, the diagonal $d_{ij}, i,j \in n$ will be interpreted as $D_{ij}=\{s\in V: s_i=s_j\}$.
All positive results, with no single exception, established for the diagonal fre case, i.e. for $SA_n$  
will extend to $DSA_n$, as the reader is invited to show. 
However,
the counterexamples constructed above to violate complete representability of atomic algebras, and an omitting types theorem for the corresponding 
muti-dimensional modal logic {\it do not work} in this new context, because such algebras
do not contain the diagonal $D_{01}$, and this part is essential in the proof. We can show though 
that again the class of completely represenable algebras is elementary.

We will return to such an issue in the infinite dimensional csae, where even more interesting results hold; for example square representaion and weak ones
form an interesting dichotomy. 

\subsection{The Infinite Dimensional Case}

For $SA$'s, we can lift our results to infinite dimensions.

We give, what we believe is  a reasonable generalization to the above theorems for the infinite dimensional case, by allowing weak sets as units,
a weak set being a set of sequences that agree
cofinitely with some fixed sequence.

That is a weak set is one of the form $\{s\in {}^{\alpha}U: |\{i\in \alpha, s_i\neq p_i\}|<\omega\}$,
where $U$ is a set, $\alpha$ an ordinal and $p\in {}^{\alpha}U$.
This set will be denoted by $^{\alpha}U^{(p)}$. The set $U$ is called the base of the weak set.
A set $V\subseteq {}^{\alpha}\alpha^{(Id)}$, is defined to
be  di-permutable just like the finite dimensional case. Altering top elements to be weak sets, rather than squares,
turns out to be   fruitful and rewarding.

\begin{definition}
We let  $WSA_{\alpha}$ be the variety generated by
$$\wp(V)=\langle\mathcal{P}(V),\cap,-,S_{ij}, S_i^j\rangle_{i,j\in\alpha},$$
where $V\subseteq {}^\alpha\alpha^{(Id)}$ is locally square 
(Recall that $V$ is locally square, if whenever $s\in V$, then, $s\circ [i|j]$ and  $s\circ [i,j]\in V$, for $i,j\in \alpha$.)

\end{definition}
Let $\Sigma_{\alpha}$ be the set of finite schemas obtained from
$\Sigma_n$ but now allowing indices from $\alpha$.

Obviously $\Sigma_{\alpha}$ is infinite, but it has a finitary feature in a two sorted sense.
One sort for the ordinals $<\alpha$, the other for the first order situation.

Indeed, the system $({\bf Mod}(\Sigma_{\alpha}): \alpha\geq \omega)$ is an instance of
what is known in the literature as a system of varieties definable by a finite schema, which means that it is enough to specify a strictly
finite subset of $\Sigma_{\omega}$,
to define $\Sigma_{\alpha}$ for all $\alpha\geq \omega$. (Strictly speaking, systems of varieties definable by schemes require
that we have a unary operation behaving like a cylindrifier, but such a distinction is immaterial in the present context.)


More precisely, let $L_{\kappa}$ be the language of $WSA_{\kappa}$.
Let $\rho:\alpha\to \beta$ be an injection. One defines for a term $t$ in $L_{\alpha}$ a term $\rho(t)$ in $L_{\beta}$ by recursion
as follows: For any variable $v_i$, $\rho(v_i)=v_i$ and for any unary operation $f$
$\rho(f(\tau))=f(\rho(\tau))$.
Now let $e$ be a given equation in the language $L_{\alpha}$, say $e$ is the equation $\sigma=\tau$.
then one defines $\rho(e)$ to be the equation $\rho(\sigma)=\rho(\tau)$.

Then there exists a finite set $\Sigma\subseteq \Sigma_{\omega}$ such that
$\Sigma_{\alpha}=\{\rho(e): \rho:\omega\to \alpha \text { is an injection },e\in \Sigma\}.$
Notice that $\Sigma_{\omega}=\bigcup_{n\geq 2} \Sigma_n.$

Let $SA_{\alpha}={\bf Mod}(\Sigma_{\alpha})$. We give two proofs of the next main representation theorem, but first a definition.

\begin{definition} Let $\alpha\leq\beta$ be ordinals and let $\rho:\alpha\rightarrow\beta$ be an injection.
For any $\beta$-dimensional algebra $\B$
we define an $\alpha$-dimensional algebra $\Rd^\rho\B$, with the same base and Boolean structure as
$\B$, where the $(i,j)$th transposition substitution  of $\Rd^\rho\B$ is $s_ {\rho(i)\rho(j)}\in\B$, and same for replacements.

For a class $K$, $\Rd^{\rho}K=\{\Rd^{\rho}\A: \A\in K\}$. When $\alpha\subseteq \beta$ and $\rho$ is the identity map on $\alpha$, then we write
$\Rd_{\alpha}\B$, for $\Rd^{\rho}\B$.
\end{definition}
Our first proof, is more general than the present context; it is basically a lifting argument that can be used to transfer results in the finite dimensional case to 
infinite dimensions.
\begin{theorem}\label{r} For any infinite ordinal $\alpha$, $SA_{\alpha}=WSA_{\alpha}.$
\end{theorem}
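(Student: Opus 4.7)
The plan is to prove both inclusions. For $WSA_\alpha \subseteq SA_\alpha$, one verifies that each equation in the schema $\Sigma_\alpha$ holds in every full weak set algebra $\wp(V)$ with $V \subseteq {}^\alpha\alpha^{(Id)}$ locally square. Since each such equation mentions only finitely many indices and is obtained by schema-substitution from a fixed finite fragment of $\Sigma_\omega$, the check reduces to the finite-dimensional axioms $\Sigma_n$, already verified in the proof of Theorem \ref{variety}.

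For the hard direction $SA_\alpha \subseteq WSA_\alpha$, the plan is to mimic the finite-dimensional representation using the infinite-dimensional analogue of Lemma \ref{lemma}. Fix $\A \in SA_\alpha$ and a nonzero $a \in A$, and set $V = {}^\alpha\alpha^{(Id)}$. Every $\xi \in V$ has finite support, so $\xi$ can be written as a finite composition of basic transpositions $[i,j]$ and replacements $[i|j]$; define $s_\xi$ on $\A$ as the corresponding composition of basic substitution operations. The axioms of $\Sigma_\alpha$, being the schema lift of the finite monoid presentation of \cite{semigroup}, force $s_\xi$ to be independent of the chosen factorization, to be a Boolean endomorphism, and to satisfy the monoid law $s_\sigma \circ s_\xi = s_{\sigma \circ \xi}$ for all $\sigma,\xi \in V$.

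Now pick a Boolean ultrafilter $F$ of $\A$ containing $a$ and, for each $\xi \in V$, set $\F_\xi = \{z \in A : s_\xi z \in F\}$; each $\F_\xi$ is a Boolean ultrafilter. The monoid law immediately yields $s_{ij} z \in \F_\xi \Leftrightarrow z \in \F_{\xi \circ [i,j]}$, together with the analogous identity for replacements, noting that $\xi \circ [i,j]$ and $\xi \circ [i|j]$ still have finite support and so lie in $V$. These are exactly the hypotheses of Lemma \ref{lemma} in the infinite-dimensional setting, so $h(x) = \{\xi \in V : x \in \F_\xi\}$ defines a homomorphism $h : \A \to \wp(V)$, and $h(a) \neq 0$ since $Id_\alpha \in V$ witnesses $a \in \F_{Id_\alpha}$. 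Letting $a$ range over all nonzero elements realizes $\A$ as a subdirect product of full weak set algebras, placing $\A$ in $WSA_\alpha$.

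The main obstacle is the well-definedness of $s_\xi$ on $\A$ for $\xi \in V$, and this is exactly where the lifting viewpoint does the work. Two factorizations of $\xi$ touch only finitely many indices, whose union embeds into a finite $\Delta \subseteq \alpha$ of some cardinality $n$. The reduct $\Rd^\rho \A$ along the inclusion $\rho : n \hookrightarrow \alpha$ enumerating $\Delta$ belongs to $SA_n$ by the schema structure of $\Sigma_\alpha$, and the finite-dimensional monoid presentation then forces the two factorizations to realize the same operation in $\Rd^\rho \A$, and hence in $\A$. In this way every equational fact about finite-support substitutions in $\A$ is transferred from a finite-dimensional reduct governed by Theorem \ref{variety}, which is what makes the argument a genuine lifting.
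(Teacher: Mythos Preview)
Your argument is correct and coincides with the paper's \emph{second} proof: the paper states Lemma~\ref{f} (the infinite-dimensional analogue of Lemma~\ref{lemma}) and indicates that one proceeds exactly as in Theorem~\ref{variety}, picking an ultrafilter $F\ni a$, setting $\F_\xi=\{z:s_\xi z\in F\}$ for $\xi\in{}^\alpha\alpha^{(Id)}$, and reading off the homomorphism $h$. Your discussion of the well-definedness of $s_\xi$ via finite reducts fills in precisely the point the paper leaves implicit when it says $S_\xi$ ``works as $S_{\xi\upharpoonright J}$''.

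It is worth noting that the paper also offers a structurally different \emph{first} proof, a genuine ultraproduct lifting that avoids ever building a single ultrafilter representation of $\A$. There one uses that each finite reduct $\Rd^\rho\A$ lies in $SA_n\subseteq\mathbf{S}\Rd^\rho WSA_\alpha$, so for every finite injection $\rho$ one finds $\B_\rho\in WSA_\alpha$ with $\Rd^\rho\A\subseteq\Rd^\rho\B_\rho$; taking an ultraproduct $\prod_\rho\B_\rho/U$ over a suitable ultrafilter on the index set of finite injections and mapping $a\mapsto(a)_\rho/U$ yields an embedding of $\A$ into an element of $WSA_\alpha$. That route is more portable (it works for any system of varieties definable by a finite schema once the finite-dimensional case is in hand), while your route is more concrete and yields an explicit weak-set representation.
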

\begin{demo}{First proof}
\begin{enumarab}
\item First for $\A\models \Sigma_{\alpha}$, $\rho:n\to \alpha,$ an injection, and $n\in \omega,$ we have  $\Rd^{\rho}\A\in SA_n$.

\item For any $n\geq 2$ and $\rho:n\to \alpha$ as above, $SA_n\subseteq\mathbf{S}\Rd^{\rho}WSA_{\alpha}$ as in \cite{HMT2} theorem 3.1.121.

\item $WTA_{\alpha}$ is, by definition,  closed under ultraproducts.

\end{enumarab}

Now we show that if $\A\models \Sigma_{\alpha}$, then $\A$ is representable.
First, for any $\rho:n\to \alpha$, $\Rd^{\rho}\A\in SA_n$. Hence it is in $S\Rd^{\rho}WSA_{\alpha}$.
Let $I$ be the set of all finite one to one sequences with range in $\alpha$.
For $\rho\in I$, let $M_{\rho}=\{\sigma\in I:\rho\subseteq \sigma\}$.
Let $U$ be an ultrafilter of $I$ such that $M_{\rho}\in U$ for every $\rho\in I$. Exists, since $M_{\rho}\cap M_{\sigma}=M_{\rho\cup \sigma}.$
Then for $\rho\in I$, there is $\B_{\rho}\in WSA_{\alpha}$ such that
$\Rd^{\rho}\A\subseteq \Rd^{\rho}\B_{\rho}$. Let $\C=\prod\B_{\rho}/U$; it is in $\mathbf{Up}WSA_{\alpha}=WSA_{\alpha}$.
Define $f:\A\to \prod\B_{\rho}$ by $f(a)_{\rho}=a$ , and finally define $g:\A\to \C$ by $g(a)=f(a)/U$.
Then $g$ is an embedding.
\end{demo}
The \textbf{second proof} follows from the next lemma,
whose proof is identical to the finite dimensional case with obvious modifications.
Here, for $\xi\in {}^\alpha\alpha^{(Id)},$ the operator $S_\xi$ works as $S_{\xi\upharpoonright J}$ (which can be defined as in in the finite dimensional case because we have a 
finite support)
where $J=\{i\in\alpha:\xi(i)\neq i\}$ (in case $J$ is empty, i.e., $\xi=Id_\alpha,$ $S_\xi$ is the identity operator).
\begin{lemma}\label{f}
Let $\A$ be an $SA_\alpha$ type $BAO$ and $G\subseteq{}^\alpha\alpha^{(Id)}$ permutable.
Let $\langle\mathcal{F}_\xi:\xi\in G\rangle$ is a system of ultrafilters of $\A$
such that for all $\xi\in G,\;i\neq j\in \alpha$ and $a\in\A$
the following condition holds:$$S_{ij}^\A(a)\in\mathcal{F}_\xi\Leftrightarrow a\in \mathcal{F}_{\xi\circ[i,j]}\quad\quad (*).$$
Then the following function $h:\A\longrightarrow\wp(G)$
is a homomorphism
$$h(a)=\{\xi\in G: a\in \mathcal{F}_\xi\}.$$
\end{lemma}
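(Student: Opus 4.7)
\begin{demo}{Proof proposal}
The plan is to mirror the finite-dimensional argument in Lemma \ref{lemma} essentially verbatim, but paying attention to the fact that the hypothesis $G\subseteq {}^\alpha\alpha^{(Id)}$ only requires that $G$ be closed under composition with the generating maps $[i,j]$ and $[i|j]$, and that the non-Boolean operations of $SA_\alpha$ always have finite support. Since $h$ sends each $a\in\A$ to the set of indices $\xi\in G$ whose attached ultrafilter $\mathcal{F}_\xi$ contains $a$, one only has to check that the clauses defining a homomorphism between a $BAO$ and a concrete set algebra are preserved pointwise in $\xi$.

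First, I would verify that $h$ respects the Boolean structure. For each fixed $\xi\in G$, the fact that $\mathcal{F}_\xi$ is an ultrafilter gives at once $h(a\cdot b)=h(a)\cap h(b)$ and $h(-a)=G\setminus h(a)$, so the Boolean reduct is handled by a one-line computation using $a\cdot b\in\mathcal{F}_\xi\Leftrightarrow a\in\mathcal{F}_\xi\text{ and }b\in\mathcal{F}_\xi$ and the complementation property of ultrafilters.

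Next, I would handle the transposition operations. For $i\neq j\in\alpha$ and $a\in\A$, compute
\[
h(S_{ij}^\A a)=\{\xi\in G: S_{ij}^\A a\in\mathcal{F}_\xi\}=\{\xi\in G: a\in\mathcal{F}_{\xi\circ[i,j]}\},
\]
where the second equality is exactly condition $(*)$. Since $G$ is permutable, $\xi\circ[i,j]\in G$ whenever $\xi\in G$, so this set equals $\{\xi\in G:\xi\circ[i,j]\in h(a)\}$, which is by definition $S_{ij}^{\wp(G)}h(a)$. The same line of reasoning, invoking the replacement analogue of $(*)$, disposes of the replacement substitutions $S_i^j$; here the permutability assumption on $G$ is again what makes $\mathcal{F}_{\xi\circ[i|j]}$ available in our system.

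The only step that deserves a second of thought, and which I expect to be the only obstacle worth mentioning, is bookkeeping the general $S_\xi$ for $\xi\in {}^\alpha\alpha^{(Id)}$ of finite support $J$. Since $S_\xi$ is defined as $S_{\xi\restr J}$ and $\xi\restr J$ factors as a finite composition of the generating transpositions and replacements, the preservation property follows from the transposition and replacement cases above together with the relations $s_\sigma^\A\circ s_\tau^\A=s_{\sigma\circ\tau}^\A$ that are built into the axioms $\Sigma_\alpha$ and are inherited by $\wp(G)$. This reduces the infinite-dimensional verification to a finite composition of finite-dimensional checks, so no genuinely new idea beyond Lemma \ref{lemma} is needed.
\end{demo}
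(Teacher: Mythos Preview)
Your proposal is correct and follows exactly the route the paper intends: the paper's own proof consists of the single remark that it is ``identical to the finite dimensional case with obvious modifications,'' and you have spelled out precisely those modifications---the ultrafilter computation for the Boolean part, condition $(*)$ for transpositions, the analogous condition for replacements, and the observation that general $S_\xi$ with finite support reduces to a finite composition of the generators. One small point worth noting: the lemma as stated only records condition $(*)$ for transpositions, whereas the finite-dimensional Lemma~\ref{lemma} also carries the replacement condition $(**)$; you correctly invoke ``the replacement analogue of $(*)$'' to handle $S^i_j$, which is clearly intended but technically omitted from the statement here.
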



\begin{definition}
We let $WSA_{\alpha}$ be the variety generated by
$$\wp(V)=\langle\mathcal{P}(V),\cap,\sim,S^i_j,S_{ij}\rangle_{i,j\in\alpha}, \ \ V\subseteq{}^\alpha\alpha^{(Id)}$$ is locally square.

\end{definition}
Let $\Sigma_{\alpha}$ be the set of finite schemas obtained from
from the $\Sigma_n$ but now allowing indices from $\alpha$; and let
$SA_{\alpha}={\bf Mod}(\Sigma_{\alpha}')$.  Then as before, we can prove, completeness and interpolation 
(for the corresponding multi dimensional modal logic):

\begin{theorem} \label{1}\begin{enumarab} Let $\alpha$ be an infinite ordinal. Then, we have:
\item $WSA_{\alpha}=SA_{\alpha}$
\item $SA_{\alpha}$ has the superamalgamation property
\end{enumarab}
\end{theorem}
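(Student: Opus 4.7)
The plan is to mirror the finite dimensional arguments, taking advantage of the fact that every $\xi\in{}^\alpha\alpha^{(Id)}$ has finite support $J_\xi=\{i:\xi(i)\neq i\}$, so that $S_\xi$ is a well defined derived operation in any $\A\in SA_\alpha$. For part (1), the inclusion $WSA_\alpha\subseteq SA_\alpha$ is a routine check that the set algebras satisfy every instance of $\Sigma_\alpha$. For the converse, given $\A\models\Sigma_\alpha$ and a non-zero $a\in A$, I would pick a Boolean ultrafilter $F$ of $\A$ containing $a$ and, for every $\xi\in V:={}^\alpha\alpha^{(Id)}$, set $\mathcal{F}_\xi=\{z\in A:S_\xi^\A(z)\in F\}$. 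Since each $\xi$ has finite support the operator $S_\xi$ is obtained by composing finitely many generators, and the $\Sigma_\alpha$ equations guarantee that the $\mathcal{F}_\xi$ are ultrafilters satisfying the compatibility condition $(*)$ of Lemma \ref{f}. Applying that lemma with $G=V$ (which is manifestly locally square) yields a homomorphism $h_a:\A\to\wp(V)$ with $h_a(a)\neq 0$ (witnessed by $Id_\alpha$), and taking the product over all non-zero $a\in A$ gives the required embedding into a member of $WSA_\alpha$.

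For part (2), I would reproduce the argument of Theorem \ref{super} verbatim, with squares $^nn$ replaced throughout by the weak square $V={}^\alpha\alpha^{(Id)}$. Concretely, set $\A=\Fr_X SA_\alpha$, pick $X_1,X_2\subseteq X$ with $X_1\cup X_2=X$, and assume for contradiction that $a\in\Sg^\A X_1$, $c\in\Sg^\A X_2$ with $a\leq c$ admit no interpolant $b\in\Sg^\A(X_1\cap X_2)$. Form the Boolean filters
$$H_1=fl^{\Bl\Sg^\A X_1}\{a\},\quad H_2=fl^{\Bl\Sg^\A X_2}\{-c\},$$
$$H=fl^{\Bl\Sg^\A(X_1\cap X_2)}[(H_1\cap\Sg^\A(X_1\cap X_2))\cup(H_2\cap\Sg^\A(X_1\cap X_2))],$$
show $H$ is proper by exactly the calculation in Theorem \ref{super}, and extend it to an ultrafilter $H^*$ of $\Sg^\A(X_1\cap X_2)$. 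Choose ultrafilters $F$ of $\Sg^\A X_1$ and $G$ of $\Sg^\A X_2$ extending $H_1$ and $H_2$ respectively, both lying above $H^*$.

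Now invoke part (1): define $h_1:\Sg^\A X_1\to\wp(V)$ by $h_1(x)=\{\eta\in V:S_\eta x\in F\}$ and $h_2:\Sg^\A X_2\to\wp(V)$ analogously using $G$. By the construction underlying part (1) these are homomorphisms, and for $x\in\Sg^\A(X_1\cap X_2)$ one has $\eta\in h_1(x)$ iff $S_\eta x\in F\cap\Sg^\A(X_1\cap X_2)=H^*=G\cap\Sg^\A(X_1\cap X_2)$ iff $\eta\in h_2(x)$, so $h_1\cup h_2$ is a function. Freeness extends it to $h:\A\to\wp(V)$, and then $Id_\alpha\in h(a)\cap h(-c)$ contradicts $a\leq c$. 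Thus strong interpolation holds in $\Fr_X SA_\alpha$, and Maksimova's theorem (cited as \cite{Mak}) delivers the superamalgamation property for the whole variety.

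The main obstacle I anticipate is purely bookkeeping: confirming that the operator $S_\xi$ for $\xi\in{}^\alpha\alpha^{(Id)}$ really does compose in the way Lemma \ref{f} requires when its finite support interacts with the generators of the symmetric semigroup on $\alpha$. Once this compositional identity is verified, every step from the finite dimensional proofs transfers without modification, because local finiteness (the one result explicitly declared not to lift) is never used in either Theorem \ref{super} or Theorem \ref{variety}; only the existence of Boolean ultrafilters, the definability of $S_\xi$ from finitely many generators, and the freeness of $\Fr_X SA_\alpha$ are needed, all of which are available in the infinite dimensional setting.
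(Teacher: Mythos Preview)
Your proposal is correct and matches the paper's intended argument: the paper's own proof is simply ``Like before undergoing the obvious modifications,'' and you have spelled out precisely those modifications, using Lemma~\ref{f} with $G={}^\alpha\alpha^{(Id)}$ for part~(1) and reproducing the interpolation argument of Theorem~\ref{super} with the weak space in place of $^nn$ for part~(2). The only minor remark is that for part~(1) the paper records two proofs of the identical statement (Theorem~\ref{r}): yours corresponds to the second, direct ultrafilter construction, while the first is a lifting argument via finite reducts and ultraproducts; either route is acceptable here.
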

\begin{proof}  Like before undergoing the obvious modifications. 
\end{proof}

In particular, from the first item, it readily follows, that if $\A\subseteq \wp(^{\alpha}U)$ and $a\in A$ is non-zero,
then there exists a homomorphism $f:\A\to \wp(V)$ for some permutable $V$ such that $f(a)\neq 0$.
We shall  prove a somewhat deep converse of this result, that will later enable us to verify that the quasi-variety of 
subdirect products of full set algebras is a variety.

But first a definition and a result on the number of non-isomorphic models.
\begin{definition}
Let $\A$ and $\B$ be set algebras with bases $U$ and $W$ respectively. Then $\A$ and $\B$
are \emph{base isomorphic} if there exists a bijection
$f:U\to W$ such that $\bar{f}:\A\to \B$ defined by ${\bar f}(X)=\{y\in {}^{\alpha}W: f^{-1}\circ y\in x\}$ is an isomorphism from $\A$ to $\B$
\end{definition}
\begin{definition} An algebra $\A$ is \emph{hereditary atomic}, if each of its subalgebras is atomic.
\end{definition}
Finite Boolean algebras are hereditary atomic of course,
but there are infinite hereditary atomic Boolean algebras, example any Boolean algebra generated 
by the atoms is necessarily hereditory atomic, like the finite cofinite Boolean algebra. 
An algebra that is infinite and complete, like that in our example violating complete representability,
is not hereditory atomic, whether atomic or not.

\begin{example}
Hereditary atomic algebras arise naturally as the Tarski Lindenbaum algebras of
certain countable first order theories, that abound. If $T$ is a countable complete first order theory
which has an an $\omega$-saturated model, then for each $n\in \omega$,
the Tarski Lindenbuam Boolean algebra $\Fm_n/T$ is hereditary atomic. Here $\Fm_n$ is the set of formulas using only
$n$ variables. For example $Th(\Q,<)$ is such with $\Q$ the $\omega$ saturated model.
\end{example}

A well known model-theoretic result is that $T$ has an $\omega$ saturated model iff $T$ has countably many $n$ types
for all $n$. Algebraically $n$ types are just ultrafilters in $\Fm_n/T$.
And indeed, what characterizes hereditary atomic algebras is that the base of their Stone space, that is the set of all
ultrafilters, is at most countable.

\begin{lemma}\label{b} Let $\B$ be a countable  Boolean algebra. If $\B$ is hereditary atomic then 
the number of ultrafilters is at most countable; of course they are finite
if $\B$ is finite. If $\B$ is not hereditary atomic then it has exactly $2^{\omega}$ ultrafilters.
\end{lemma}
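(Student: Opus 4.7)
The plan is to translate both claims into statements about the Stone space $S(\B)$ of all Boolean ultrafilters of $\B$. Since $\B$ is countable, $S(\B)$ is a compact, zero-dimensional, second-countable Hausdorff space (hence a Polish space), with countable clopen basis $\{N_a : a \in \B\}$, where $N_a = \{F\in S(\B) : a\in F\}$.

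The finite case of the first assertion is clear, so suppose $\B$ is infinite and hereditary atomic; I argue contrapositively, showing that if $|S(\B)| > \omega$ then $\B$ has an atomless subalgebra. By the Cantor--Bendixson theorem for Polish spaces, $S(\B)$ has a non-empty perfect kernel $P$. The plan is to build a binary tree $\{a_s : s \in 2^{<\omega}\} \subseteq \B$ with $a_\emptyset = 1$, each $a_s$ non-zero, $a_s = a_{s^\frown 0} \sqcup a_{s^\frown 1}$, and the crucial invariant $N_{a_s} \cap P \neq \emptyset$. At each step, $N_{a_s} \cap P$ is a non-empty clopen of the perfect set $P$ and so is infinite, so I can pick two distinct $x, y \in N_{a_s}\cap P$ and some $b \in \B$ with $b \in x$, $-b \in y$, and set $a_{s^\frown 0} = a_s \wedge b$, $a_{s^\frown 1} = a_s \wedge -b$. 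The subalgebra $\B' \subseteq \B$ generated by $\{a_s\}$ is then atomless: any non-zero $e \in \B'$ involves only finitely many $a_{s_i}$, and for a sufficiently deep level $\ell$ one can rewrite $e = \bigsqcup_{s \in S} a_s$ for some non-empty $S \subseteq 2^\ell$; choosing $s_0 \in S$, both $a_{s_0^\frown 0}$ and $a_{s_0^\frown 1}$ sit strictly between $0$ and $e$ in $\B'$, so $e$ is not an atom. This contradicts hereditary atomicity of $\B$.

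For the second assertion, let $\C \subseteq \B$ be a subalgebra and $a \in \C$ a non-zero element with no atom of $\C$ below it; the property of having no $\C$-atom below passes to every non-zero $c \leq a$ in $\C$ (any such atom would be below $a$), so each such $c$ is itself not an atom and splits non-trivially in $\C$. Iterating yields a binary tree $\{a_s : s \in 2^{<\omega}\} \subseteq \C \subseteq \B$ with $a_\emptyset = a$, $a_s = a_{s^\frown 0} \sqcup a_{s^\frown 1}$, and all $a_s$ non-zero. Each $\alpha \in 2^\omega$ then gives a decreasing chain $\{a_{\alpha \upharpoonright n}\}_{n<\omega}$ with the finite intersection property, extending to an ultrafilter $F_\alpha$ of $\B$; distinct $\alpha, \alpha' \in 2^\omega$ diverge at some level and then $F_\alpha, F_{\alpha'}$ contain disjoint tree elements, forcing $F_\alpha \neq F_{\alpha'}$. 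Thus $|S(\B)| \geq 2^\omega$, and the matching upper bound $|S(\B)| \leq 2^\omega$ is automatic from $|\B| = \omega$. The one delicate point is the atomless-subalgebra construction in the first direction: the perfect-kernel invariant $N_{a_s} \cap P \neq \emptyset$ is precisely what keeps the generated subalgebra from accidentally retaining atoms, and is where the argument goes beyond a routine tree construction.
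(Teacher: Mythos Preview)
Your argument is correct. Note, however, that the paper does not actually supply a proof of this lemma: it simply refers the reader to \cite{HMT1}, pp.~364--365. Your write-up is therefore strictly more than what the paper provides, and it follows the standard route one finds in the Boolean-algebra literature: pass to the (Polish) Stone space, invoke Cantor--Bendixson, and run a binary-tree splitting argument. The two directions are handled exactly as one would expect. In the second direction the tree lives inside the given non-atomic subalgebra and produces $2^{\omega}$ distinct branches, hence $2^{\omega}$ ultrafilters of $\B$; in the first direction the perfect kernel $P$ furnishes the invariant $N_{a_s}\cap P\neq\emptyset$ that keeps every node of the tree non-zero, so that the generated subalgebra is genuinely atomless. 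Your closing remark correctly identifies this invariant as the only non-routine point: without anchoring each $a_s$ to $P$ the splitting could terminate at an atom of $\B$, and no atomless subalgebra would emerge.
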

\begin{proof}\cite{HMT1} p. 364-365  for a detailed discussion.
\end{proof}

A famous conjecture of Vaught says that the number of non-isomorphic countable models of a complete theory
is either $\leq \omega$ or exactly $^{\omega}2$. We show that this is the case for the multi (infinite) dimensional modal logic corresponding
to $SA_{\alpha}$. Morleys famous theorem excluded all possible cardinals in between except for $\omega_1$.

\begin{theorem}\label{2} Let $\A\in SA_{\omega}$ be countable and simple.
Then the number of non base isomorphic representations of $\A$ is either $\leq \omega$ or $^{\omega}2$. Furthermore, 
if $\A$ is assumed completely additive, and $(X_i: i< covK)$ is a family of non-principal types, then the number of models omitting 
these types is the same.
\end{theorem}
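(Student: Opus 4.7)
\begin{demo}{Proof plan}
The strategy is to reduce the counting of representations (up to base isomorphism) to the counting of ultrafilters of the Boolean reduct $\Bl\A$, and then exploit the Vaught-style dichotomy supplied by Lemma \ref{b}. Recall from Lemma \ref{f} and the arguments in Theorem \ref{r} that every representation $f:\A\to \wp(V)$ with $V\subseteq {}^{\omega}\omega^{(Id)}$ locally square is entirely determined by the coherent system of ultrafilters $(\mathcal{F}_\xi:\xi\in V)$, where $\mathcal{F}_\xi=\{a\in \A:\xi\in f(a)\}$. Two representations are base isomorphic precisely when the corresponding families of ultrafilters coincide after a suitable relabelling of $V$; in particular, a base isomorphism class is determined by the set $\{\mathcal{F}_\xi:\xi\in V\}\subseteq \Uf(\Bl\A)$ together with the coherence data encoded by the substitution axioms.

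Since $\A$ is countable, Lemma \ref{b} gives the dichotomy: either $\Bl\A$ is hereditary atomic, in which case $|\Uf(\Bl\A)|\leq\omega$, or $\Bl\A$ is not hereditary atomic and $|\Uf(\Bl\A)|=2^{\omega}$. In the first case, any representation uses only countably many ultrafilters, and since a countable base can be assembled from these ultrafilters in only countably many essentially distinct ways, the number of non base isomorphic representations is at most $\omega$. In the second case I follow the technique of Shelah cited in the proof of the previous theorem (with $\lambda=2^{\omega}$ non-principal ultrafilters playing the role of points to be separated): build a tree of approximations so that at level $i$ two branches disagree on realizing some non-principal ultrafilter $F_i$, producing $2^{\omega}$ representations $(\B_i:i<2^{\omega})$ such that for $i\neq j$, every ultrafilter realized in both $\B_i$ and $\B_j$ is principal. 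Base isomorphism preserves the family of realized non-principal ultrafilters, so this forces the $\B_i$ to fall into $2^{\omega}$ distinct base isomorphism classes.

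For the second part, I would merge the above construction with the $covK$-omitting-types argument used in the theorem preceding \ref{elementary}. Complete additivity of $\A$ guarantees that for every $i<covK$ and every $\tau\in {}^{\omega}\omega^{(Id)}$ the sets $\bold H_{i,\tau}=\bigcap_{x\in X_i} N_{s_{\tau}x}$ are nowhere dense in the Stone space $S$ of $\Bl\A$; by the Baire category property of $covK$, the union $\bold H$ of all these sets is meagre. In the hereditary atomic case every principal ultrafilter lies outside $\bold H$, and each of the at most countably many representations automatically omits the types. In the non hereditary atomic case, the Shelah tree construction can be carried out with the further constraint that each branch selects ultrafilters from $S\setminus \bold H$, which is still dense by Baire plus $covK$, yielding $2^{\omega}$ non base isomorphic representations that simultaneously omit the given family of types.

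The main obstacle is the non hereditary atomic case: one has to perform the $2^{\omega}$-branching construction while respecting the coherence condition $(\ast)$ of Lemma \ref{f} (so that each branch actually produces a bona fide representation) and while steering all selected ultrafilters into $S\setminus \bold H$. The coherence condition requires that at each node one extends a partial system $(\mathcal{F}_\xi)$ to a larger locally square index set, and it must be arranged that the two successors of a node first disagree on some designated non-principal ultrafilter before being completed coherently; verifying that the dense open complements of $\bold H_{i,\tau}$ intersect every basic clopen set relevant to such a step is where simplicity of $\A$ and complete additivity are used. Once this combinatorial bookkeeping is done, the two clauses of the theorem follow uniformly.
\end{demo}
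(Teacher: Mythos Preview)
Your plan diverges from the paper's proof in a way that both complicates the argument and leaves a genuine gap in the second part.

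The paper's core observation, which you do not make, is that for simple $\A$ every single ultrafilter $F$ already gives a representation $h_F:\A\to\wp(V)$ via $h_F(a)=\{\tau\in V:s_\tau a\in F\}$, and---crucially---$h_F(\A)$ and $h_G(\A)$ are base isomorphic if and only if $s_\sigma F=G$ for some finite permutation $\sigma$. Thus base-isomorphism classes of representations are exactly the orbits of the \emph{countable} group of finite permutations acting on $\Uf(\Bl\A)$. The dichotomy for the first part is then immediate cardinal arithmetic: $|\Uf(\Bl\A)|$ is either $\leq\omega$ (hereditary atomic) or $2^\omega$ by Lemma~\ref{b}, and in the latter case $2^\omega$ points divided into countable orbits yields $2^\omega$ orbits. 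No Shelah-style tree is needed, and no ``coherence bookkeeping'' arises because a single ultrafilter already determines a coherent system $\mathcal F_\xi=\{a:s_\xi a\in F\}$.

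For the second part your case split is wrong, and this is where the gap lies. You split on whether $\A$ is hereditary atomic, and in the non-hereditary-atomic branch you assert that the tree construction can be carried out inside $S\setminus\bold H$ because that set is dense. But density does not yield $2^\omega$ branches: even when $|\Uf(\Bl\A)|=2^\omega$, the dense $G_\delta$ set $\mathbb H=S\setminus\bold H$ of ultrafilters omitting the types may well be countable, in which case there are only countably many omitting representations and your tree cannot exist. The paper handles this correctly by observing that $\mathbb H$ is a $G_\delta$ subset of the Polish Stone space, hence itself Polish, and that the restriction of the orbit equivalence relation is Borel; one then invokes the standard dichotomy (countable Polish group acting on a Polish space, or Silver's theorem) to conclude that the number of orbits in $\mathbb H$ is either $\leq\omega$ or $2^\omega$. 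The relevant case split is therefore on $|\mathbb H|$, not on hereditary atomicity of $\A$.
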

\begin{proof} For the first part. If $\A$ is hereditary atomic, then the number of models $\leq$ the number of ultrafilters, hence is at most countable.
Else, $\A$ is not hereditary atomic, then it has $^{\omega}2$ ultrafilters. For an ultrafilter $F$, let $h_F(a)=\{\tau \in V: s_{\tau}a\in F\}$, $a\in \A$.
Then $h_F\neq 0$, indeed $Id\in h_F(a)$ for any $a\in \A$, hence $h_F$ is an injection, by simplicity of $\A$.
Now $h_F:\A\to \wp(V)$; all the $h_F$'s have the same target algebra.
We claim that $h_F(\A)$ is base isomorphic to $h_G(\A)$ iff there exists a finite bijection $\sigma\in V$ such that
$s_{\sigma}F=G$.
We set out to confirm our claim. Let $\sigma:\alpha\to \alpha$ be a finite bijection such that $s_{\sigma}F=G$.
Define $\Psi:h_F(\A)\to \wp(V)$ by $\Psi(X)=\{\tau\in V:\sigma^{-1}\circ \tau\in X\}$. Then, by definition, $\Psi$ is a base isomorphism.
We show that $\Psi(h_F(a))=h_G(a)$ for all $a\in \A$. Let $a\in A$. Let $X=\{\tau\in V: s_{\tau}a\in F\}$.
Let $Z=\Psi(X).$ Then
\begin{equation*}
\begin{split}
&Z=\{\tau\in V: \sigma^{-1}\circ \tau\in X\}\\
&=\{\tau\in V: s_{\sigma^{-1}\circ \tau}(a)\in F\}\\
&=\{\tau\in V: s_{\tau}a\in s_{\sigma}F\}\\
&=\{\tau\in V: s_{\tau}a\in G\}.\\
&=h_G(a)\\
\end{split}
\end{equation*}
Conversely, assume that $\bar{\sigma}$ establishes a base isomorphism between $h_F(\A)$ and $h_G(\A)$.
Then $\bar{\sigma}\circ h_F=h_G$.  We show that if $a\in F$, then $s_{\sigma}a\in G$.
Let $a\in F$, and let $X=h_{F}(a)$.
Then, we have
\begin{equation*}
\begin{split}
&\bar{\sigma\circ h_{F}}(a)=\sigma(X)\\
&=\{y\in V: \sigma^{-1}\circ y\in h_{F}(X)\}\\
&=\{y\in V: s_{\sigma^{-1}\circ y}a\in F\}\\
&=h_G(a)\\
\end{split}
\end{equation*}
Now we have $h_G(a)=\{y\in V: s_{y}a\in G\}.$
But $a\in F$. Hence $\sigma^{-1}\in h_G(a)$ so $s_{\sigma^{-1}}a\in G$, and hence $a\in s_{\sigma}G$.

Define the equivalence relation $\sim $ on the set of ultrafilters by $F\sim G$, if there exists a finite permutation $\sigma$
such that $F=s_{\sigma}G$. Then any equivalence class is countable, and so we have $^{\omega}2$ many orbits, which correspond to
the non base isomorphic representations of $\A$.

For the second part,  suppose we want to count the number of representations omitting a
family $\bold X=\{X_i:i<\lambda\}$ ($\lambda<covK)$
of non-isolated types of $T$. We assume, without any loss of generality, that the dimension is $\omega$.
Let $X$ be stone space of $\A$. 
Then $$\mathbb{H}={\bf X}\sim  \bigcup_{i\in\lambda,\tau\in W}\bigcap_{a\in X_i}N_{s_\tau a}$$
(where $W=\{\tau\in{}^\omega\omega:|\{i\in \omega:\tau(i)\neq i\}|<\omega\}$) is clearly (by the above discussion)
the space of ultrafilters corresponding to representations omitting $\Gamma.$ Note that $\mathbb{H}$ the intersection of two dense sets.

But then by properties of $covK$ the union  $\bigcup_{i\in\lambda}$
can be reduced to a countable union.
We then have $\mathbb{H}$ a $G_\delta$ subset of a Polish space, namely the Stone space $X$.
So $\mathbb{H}$ is
Polish and moreover, $\mathcal{E}'=\sim \cap (\mathbb{H}\times \mathbb{H})$
is a Borel equivalence relation on $\mathbb{H}.$ 
It follows then that the number of representations  omitting $\Gamma$
is either countable or else $^{\omega}2.$

\end{proof}
The above theorem is not so  deep, as it might appear on first reading. The relatively simple 
proof is an instance of the obvious fact that if a countable Polish group, acts on an uncountable Polish space, then the number of induced orbits
has the cardinality of the continuum, because it factors out an uncountable set by a countable one.  
When the Polish group is uncountable, finding the number of orbits is still an open question, of which Vaught's
conjecture is an instance (when the group is the symmetric group on $\omega$ actong on the Polish space of pairwise non-isomorphic models.)

We shall prove that weak set algebras are strongly isomorphic to set algebras in the sense of the following definition. 
This will enable us to show that $RSA_{\alpha}$, like the finite dimensional case, is also a variety.

\begin{definition}
Let $\A$ and $\B$ be set algebras with units $V_0$ and $V_0$ and bases $U_0$ and $U_1,$ respectively,
and let $F$ be an isomorphism from $\B$ to $\A$.
Then $F$ is a \emph{strong ext-isomorphism} if $F=(X\cap V_0: X\in B)$. In this case $F^{-1}$ is called a \emph{strong subisomorphism}. 
An isomorphism
$F$ from $\A$ to $\B$ is a \emph{strong ext base isomorphism} if $F=g\circ h$
for some base isomorphism and some strong ext isomorphism $g$. In this case $F^{-1}$ is called a {\it strong sub base isomorphism.}

\end{definition}

The following, this time deep theorem, uses ideas of Andr\'eka and N\'emeti, reported in \cite{HMT2}, theorem 3.1.103, 
in how to square units of so called weak cylindric set algebras (cylindric algebras whose units are weak spaces):

\begin{theorem}\label{weak} If $\B$ is a subalgebra of $ \wp(^{\alpha}\alpha^{(Id)})$ then there exists a set algebra $\C$ with unit $^{\alpha}U$
such that $\B\cong \C$. Furthermore, the isomorphism is a strong sub-base isomorphism.
\end{theorem}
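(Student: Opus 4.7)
The plan is to adapt Andr\'eka--N\'emeti's squaring construction for weak cylindric set algebras (HMT2, Theorem 3.1.103) to the present transposition-replacement signature. Given $\B \subseteq \wp({}^\alpha\alpha^{(Id)})$, I would enlarge the base $\alpha$ to a set $U \supseteq \alpha$ and lift each $X \in \B$ to a subset $\hat X \subseteq {}^\alpha U$ so that (i) $\hat X \cap {}^\alpha\alpha^{(Id)} = X$ and (ii) the map $X \mapsto \hat X$ is a homomorphism of the algebras. Setting $\C = \{\hat X : X \in \B\}$ will then give a set algebra with square unit ${}^\alpha U$, and the inverse $Y \mapsto Y \cap {}^\alpha\alpha^{(Id)}$ will be by construction the strong ext isomorphism needed (with the identity serving as the trivial base-isomorphism component), so that $X \mapsto \hat X$ is the desired strong sub base isomorphism.

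The key construction is the lift $X \mapsto \hat X$. I would take $U$ to be a sufficiently saturated extension of $\alpha$ viewed as a pure set; for concreteness an ultrapower $U = \prod_I \alpha / F$ for a non-principal ultrafilter $F$ on a countably incomplete index set $I$, which yields abundantly many ``fresh'' elements in $U \setminus \alpha$. One then partitions ${}^\alpha U$ into orbits determined by the combinatorial pattern of each $s \in {}^\alpha U$: the equivalence relation $i \sim_s j \Leftrightarrow s(i) = s(j)$ together with the record of which coordinates land in $\alpha$ and of their values there. Using the saturation of $U$, pick a canonical representative $\bar s \in {}^\alpha\alpha^{(Id)}$ in each orbit, chosen so that $\bar s = s$ whenever $s \in {}^\alpha\alpha^{(Id)}$, and define $\hat X = \{s \in {}^\alpha U : \bar s \in X\}$. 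Condition (i) is then immediate, and preservation of $\cap$ and $\sim$ follows from the partition structure of the orbits.

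The heart of the proof is verifying that the lift $\hat{\phantom{X}}$ commutes with the substitution operations, i.e.\ that the canonical representative is equivariant: $\overline{s \circ [i,j]} = \bar s \circ [i,j]$ and $\overline{s \circ [i|j]} = \bar s \circ [i|j]$ for all distinct $i,j \in \alpha$ and all $s \in {}^\alpha U$. Because $[i,j]$ and $[i|j]$ only permute or identify two coordinates, the pattern of $s$ transforms in a predictable, canonical way, so equivariance holds provided the choice function $s \mapsto \bar s$ is defined using only the intrinsic combinatorial data of $s$ and not on any extrinsic labeling of $U \setminus \alpha$. The main obstacle will be arranging the canonical choice function so that equivariance holds simultaneously under all transpositions and replacements; happily, the absence of cylindrifiers in our signature (which would otherwise disturb supports) makes this much cleaner than in the cylindric analogue of HMT2 3.1.103, and once equivariance is in hand the remaining verifications --- that $\C$ is closed under the operations of $\wp({}^\alpha U)$ and that $\Phi: X \mapsto \hat X$ is a bijective homomorphism with inverse the restriction map --- are routine bookkeeping.
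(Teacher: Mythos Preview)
Your proposal has a genuine gap: the equivariant retraction $s \mapsto \bar s$ you need cannot exist. Consider $s \in {}^\alpha U$ with $s(i) = u_i$ for all $i \in \alpha$, where the $u_i \in U \setminus \alpha$ are pairwise distinct. For any transposition $[i,j]$, the sequence $s \circ [i,j]$ has the same combinatorial pattern as $s$ (the kernel is still discrete and no coordinate lands in $\alpha$), hence lies in the same orbit, so $\overline{s \circ [i,j]} = \bar s$. But equivariance demands $\overline{s \circ [i,j]} = \bar s \circ [i,j]$, whence $\bar s(i) = \bar s(j)$ for all $i \neq j$. Thus $\bar s$ would have to be constant, which is impossible since $\bar s$ must lie in ${}^\alpha\alpha^{(Id)}$, the set of sequences cofinitely equal to the identity. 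The obstruction is not an artifact of your particular orbit decomposition: any assignment $s \mapsto \bar s$ depending only on the intrinsic pattern of $s$---and this is essentially forced if you want equivariance under the full finitary transformation monoid---hits the same wall, because the combination of transpositions with non-invertible replacements is too rich to admit a section back into the weak space.

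The paper's proof (following Andr\'eka--N\'emeti, HMT2 3.1.103) avoids this by not attempting to retract individual sequences at all. Instead it embeds $\B$ into its own ultrapower ${}^{\omega}\B/F$ via the diagonal $\delta$, and then constructs a homomorphism $t$ from the ultrapower into $\wp({}^{\alpha}M)$, where $M = {}^{\omega}U/F$. The key device is a function $h : \omega \to [\omega]^{<\omega}$ with $\{i : \kappa \in h(i)\} \in F$ for every $\kappa$, together with a choice function $c$ arranged so that $c(\kappa,y)_i = \kappa$ whenever $\kappa \notin h(i)$. This guarantees that for each fixed $i$ the sequence $(c^+q)_i$ has finite support and hence lands in the weak space $V$; the ultrafilter $F$ then arbitrates among the different $i$'s, so no single canonical retraction is ever needed. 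Preservation of the substitution operations reduces to the one-line observation $(c^+(q\circ[i,j]))_k \in a \Leftrightarrow (c^+q)_k \in s_{[i,j]}a$ (and similarly for replacements), and the identity $g(a)\cap \bar\epsilon V = \{\epsilon\circ s : s\in a\}$ then yields the strong sub-base isomorphism. Your instinct to use an ultrapower of the base was right; what is missing is that the ultraproduct must be applied to the \emph{algebra} as well, not just to the base set.
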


\begin{proof}We square the unit using ultraproducts.
We prove the theorem for $\alpha=\omega$. We warn the reader that the proof uses heavy machinery of proprties of ultraproducts for algebras consisting of
infinitary relations. Let $F$ be a non-principal ultrafilter
over $\omega$. (For $\alpha>\omega$, one takes an $|{\alpha}^+|$ regular ultrafilter on $\alpha$).
Then there exists a function
$h: \omega\to \{\Gamma\subseteq_{\omega} \omega\}$
such that $\{i\in \omega: \kappa\in h(i)\}\in F$ for all $\kappa<\omega$.
Let $M={}^{\omega}U/F$.  $M$ will be the base of our desired algebra, that is  $\C$ will
have unit $^{\omega}M.$
Define $\epsilon: U\to {}^{\omega}U/F$ by
$$\epsilon(u)=\langle u: i\in \omega\rangle/F.$$
Then it is clear that $\epsilon$ is one to one.
For $Y\subseteq {}^{\omega}U$,
let $$\bar{\epsilon}(Y)=\{y\in {}^{\omega}(^{\omega}U/F): \epsilon^{-1}\circ y\in Y\}.$$
By an $(F, (U:i\in \omega), \omega)$ choice function we mean a function
$c$ mapping $\omega\times {}^{\omega}U/F$
into $^{\omega}U$ such that for all $\kappa<\omega$
and all $y\in {}^{\omega}U/F$, we have $c(k,y)\in y.$
Let $c$ be an $(F, (U:i\in \omega), \omega)$
choice function satisfying the following condition:
For all $\kappa, i<\omega$ for all $y\in X$, if
$\kappa\notin h(i)$ then $c(\kappa,y)_i=\kappa$,
if $\kappa\in h(i)$ and $y=\epsilon u$ with  $u\in U$ then $c(\kappa,y)_i=u$.
Let $\delta: \B\to {}^{\omega}\B/F$ be the following monomorphism
$$\delta(b)=\langle b: i\in \omega\rangle/F.$$
Let $t$ be the unique homomorphism
mapping
$^{\omega}\B/F$ into $\wp{}^{\omega}(^{\omega}U/F)$
such that  for any $a\in {}^{\omega}B$
$$t(a/F)=\{q\in {}^{\omega}(^{\omega}U/F): \{i\in \omega: (c^+q)_i\in a_i\}\in F\}.$$
Here $(c^+q)_i=\langle c(\kappa,q_\kappa)_i: k<\omega\rangle.$
It is easy to show that show that $t$ is well-defined. Assume that $J=\{i\in \omega: a_i=b_i\}\in F$. If $\{i\in \omega: (c^+q)_i\in a_i\}\in F$,
then $\{i\in \omega; (c^+q)_i\in b_i\}\in F$. The converse inclusion is the same, and we are done.

Now we check that the map preserves the operations. That the  Boolean operations are preserved is obvious.

So let us check substitutions. It is enough to consider transpositions and replacements.
Let $i,j\in \omega.$  Then $s_{[i,j]}g(a)=g(s_{[i,j]}a)$,
follows from the simple observation that $(c^+q\circ [i,j])_k\in a$ iff $(c^+q)_k\in s_{[i,j]}a$.
The case of replacements is the same;  $(c^+q\circ [i|j])_k\in a$ iff $(c^+q)_k\in s_{[i|j]}a.$

Let $g=t\circ \delta$. Then for $a\in B$, we have
$$g(a)=\{q\in {}^{\omega}(^{\omega}U/F): \{i\in \omega: (c^+q)_i\in a\}\in F\}.$$
Let $\C=g(\B)$. Then $g:\B\to \C$.
We show that $g$ is an isomorphism
onto a set algebra. First it is clear that $g$ is a monomorphism. Indeed if $a\neq 0$, then $g(a)\neq \emptyset$.
Now $g$ maps $\B$ into an algebra with unit $g(V)$.

Recall that $M={}^{\omega}U/F$. Evidently $g(V)\subseteq {}^{\omega}M$.
We show the other inclusion. Let $q\in {}^{\omega}M$. It suffices to show that
$(c^+q)_i\in V$ for all $i\in\omega$. So, let $i\in \omega$. Note that
$(c^+q)_i\in {}^{\omega}U$. If $\kappa\notin h(i)$ then we have
$$(c^+q)_i\kappa=c(\kappa, q\kappa)_i=\kappa.$$
Since $h(i)$ is finite the conclusion follows.
We now prove that for $a\in B$
$$(*) \ \ \ g(a)\cap \bar{\epsilon}V=\{\epsilon\circ s: s\in a\}.$$
Let $\tau\in V$. Then there is a finite $\Gamma\subseteq \omega$ such that
$$\tau\upharpoonright (\omega\sim \Gamma)=
p\upharpoonright (\omega\sim \Gamma).$$
Let $Z=\{i\in \omega: \Gamma\subseteq hi\}$. By the choice of $h$ we have $Z\in F$.
Let $\kappa<\omega$ and $i\in Z$.
We show that $c(\kappa,\epsilon\tau \kappa)_i=\tau \kappa$.
If
$\kappa\in \Gamma,$ then $\kappa\in h(i)$ and so
$c(\kappa,\epsilon \tau \kappa)_i=\tau \kappa$. If $\kappa\notin \Gamma,$
then $\tau \kappa=\kappa$
and $c(\kappa,\epsilon \tau \kappa)_i=\tau\kappa.$
We now prove $(*)$. Let us suppose that $q\in g(a)\cap {\bar{\epsilon}}V$.
Since $q\in \bar{\epsilon}V$ there is an $s\in V$
such that $q=\epsilon\circ s$.
Choose $Z\in F$
such that $$c(\kappa, \epsilon(s\kappa))\supseteq\langle s\kappa: i\in Z\rangle$$
for all $\kappa<\omega$. This is possible by the above.
Let $H=\{i\in \omega: (c^+q)_i\in a\}$.
Then $H\in F$. Since $H\cap Z$ is in $F$
we can choose $i\in H\cap Z$.
Then we have
$$s=\langle s\kappa: \kappa<\omega\rangle=
\langle c(\kappa, \epsilon(s\kappa))_i:\kappa<\omega\rangle=
\langle c(\kappa,q\kappa)_i:\kappa<\omega\rangle=(c^+q)_i\in a.$$
Thus $q\in \epsilon \circ s$. Now suppose that $q=\epsilon\circ s$ with $s\in a$.
Since $a\subseteq V$ we have $q\in \epsilon V$.
Again let $Z\in F$ such that for all $\kappa<\omega$
$$c(\kappa, \epsilon
s \kappa)\supseteq \langle s\kappa: i\in Z\rangle.$$
Then $(c^+q)_i=s\in a$ for all $i\in Z.$ So $q\in g(a).$
Note that $\bar{\epsilon}V\subseteq {}^{\omega}(^{\omega}U/F)$.
Let $rl_{\epsilon(V)}^{\C}$ be the function with domain $\C$
(onto $\bar{\epsilon}(\B))$
such that
$$rl_{\epsilon(V)}^{\C}Y=Y\cap \bar{\epsilon}V.$$
Then we have proved that
$$\bar{\epsilon}=rl_{\bar{\epsilon V}}^{\C}\circ g.$$
It follows that $g$ is a strong sub-base-isomorphism of $\B$ onto $\C$.
\end{proof}

Like the finite dimensional case, we get:
\begin{corollary}\label{v2} $\mathbf{SP}\{ \wp(^{\alpha}U): \text {$U$ a set }\}$ is a variety.
\end{corollary}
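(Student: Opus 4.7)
\begin{demo}{Proof plan}
The natural plan is to identify $\mathbf{SP}\{\wp({}^\alpha U): U \text{ a set}\}$ with $SA_\alpha$, since the latter class, being $\mathbf{Mod}(\Sigma_\alpha)$, is by definition a variety. So the task reduces to proving both inclusions between $\mathbf{SP}\{\wp({}^\alpha U): U \text{ a set}\}$ and $SA_\alpha$.

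The inclusion $\mathbf{SP}\{\wp({}^\alpha U): U \text{ a set}\}\subseteq SA_\alpha$ is the straightforward direction. Each full square set algebra $\wp({}^\alpha U)$ satisfies $\Sigma_\alpha$ by a routine check (the substitution operators are Boolean endomorphisms and the equations of $\Sigma_\alpha$ merely encode a presentation of the monoid ${}^\alpha\alpha$ in its action on ${}^\alpha U$). Since $SA_\alpha$ is an equational class, it is closed under $\mathbf{S}$ and $\mathbf{P}$, so the inclusion follows.

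For the reverse inclusion $SA_\alpha\subseteq \mathbf{SP}\{\wp({}^\alpha U): U \text{ a set}\}$, fix $\A\in SA_\alpha$ and a non-zero $a\in A$. By Theorem \ref{1}(1) we have $SA_\alpha=WSA_\alpha$, and the remark immediately following it yields a homomorphism $f_a:\A\to \wp(V_a)$ into a weak set algebra on a locally square unit $V_a\subseteq {}^\alpha\alpha^{(Id)}$ such that $f_a(a)\neq 0$. Now apply Theorem \ref{weak} to $f_a(\A)\subseteq \wp({}^\alpha\alpha^{(Id)})$: there is a set $U_a$ and a strong sub-base isomorphism from $f_a(\A)$ onto a subalgebra of the full square set algebra $\wp({}^\alpha U_a)$. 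Composing, we get a homomorphism $g_a:\A\to \wp({}^\alpha U_a)$ with $g_a(a)\neq 0$. Then the map $a\mapsto (g_a(x))_{a\neq 0}$ from $\A$ into $\prod_{a\neq 0}\wp({}^\alpha U_a)$ is an embedding (it separates points by construction), exhibiting $\A$ as a subdirect product of full square set algebras.

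The main obstacle is the middle step: passing from weak unit representations to square unit ones. This is exactly what Theorem \ref{weak} provides via the ultrapower-and-choice-function construction, so once that technical machinery is in hand the argument is a clean composition. Everything else is bookkeeping: invoking Theorem \ref{1} to produce separating homomorphisms into weak set algebras, and taking the product of the squared-up homomorphisms.
\end{demo}
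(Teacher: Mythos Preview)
Your proposal is correct and follows essentially the same route as the paper: separate non-zero elements by homomorphisms into weak set algebras (via Theorem~\ref{1}), then square the unit using Theorem~\ref{weak}, and compose. One small wrinkle: you write $f_a:\A\to\wp(V_a)$ with $V_a\subseteq{}^\alpha\alpha^{(Id)}$ possibly proper, but then apply Theorem~\ref{weak} to $f_a(\A)\subseteq\wp({}^\alpha\alpha^{(Id)})$, which only makes sense if $V_a={}^\alpha\alpha^{(Id)}$; in fact the ultrafilter construction behind Theorem~\ref{1} (see Lemma~\ref{f}) already lands you in $\wp({}^\alpha\alpha^{(Id)})$, so just take $V_a$ to be the full weak space and the issue disappears.
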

\begin{proof} Let $\A\in SA_\alpha$. Then for $a\neq 0$ there exists a weak set algebra $\B$ and $f:\A\to \B$ such that $f(a)\neq 0$.

By the previous theorem there is a set algebra $\C$ such that $\B\cong \C$, via $g$ say. Then $g\circ f(a)\neq 0$, and we are done.
\end{proof}
We then readily obtain:

\begin{corollary}\label{3} Let $\alpha$ be infinite. Then
$$WSA_{\alpha}= {\bf Mod}(\Sigma_{\alpha})={\bf HSP}\{{}\wp(^{\alpha}\alpha^{(Id)})\}={\bf SP}\{\wp(^{\alpha}U): U \text { a set }\}.$$
\end{corollary}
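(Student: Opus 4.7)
The plan is to prove the three equalities by cycling through the four classes and appealing to the heavy lifting already done in Theorems \ref{1}, \ref{weak} and Corollary \ref{v2}. The equality $WSA_{\alpha}=\mathbf{Mod}(\Sigma_{\alpha})$ is immediate from Theorem \ref{1}(1) together with the definition $SA_{\alpha}=\mathbf{Mod}(\Sigma_{\alpha})$, so I only need to show the two remaining equalities $WSA_{\alpha}=\mathbf{HSP}\{\wp(^{\alpha}\alpha^{(Id)})\}$ and $WSA_{\alpha}=\mathbf{SP}\{\wp(^{\alpha}U):U\text{ a set}\}$.

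For the first, I would first verify that $^{\alpha}\alpha^{(Id)}$ itself is locally square: if $s$ differs from $Id$ on a finite set $F$, then $s\circ[i|j]$ and $s\circ[i,j]$ differ from $s$ on at most two coordinates and so still differ from $Id$ only finitely. Hence $\wp(^{\alpha}\alpha^{(Id)})$ lies in the generating class of $WSA_{\alpha}$, giving $\mathbf{HSP}\{\wp(^{\alpha}\alpha^{(Id)})\}\subseteq WSA_{\alpha}$ since $WSA_{\alpha}$ is by definition a variety. For the reverse inclusion, for each locally square $V\subseteq{}^{\alpha}\alpha^{(Id)}$ I would use the infinite-dimensional analogue of Theorem \ref{relativization}, which says that $x\mapsto x\cap V$ is a surjective homomorphism from $\wp(^{\alpha}\alpha^{(Id)})$ onto $\wp(V)$, hence each generator of $WSA_{\alpha}$ belongs to $\mathbf{H}\{\wp(^{\alpha}\alpha^{(Id)})\}$, and so $WSA_{\alpha}\subseteq\mathbf{HSP}\{\wp(^{\alpha}\alpha^{(Id)})\}$.

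For the second equality, one direction is cheap: each full set algebra $\wp(^{\alpha}U)$ satisfies every equation of $\Sigma_{\alpha}$ (a routine verification on the semantic definitions of $S_{ij}$ and $S_i^j$), so $\wp(^{\alpha}U)\in\mathbf{Mod}(\Sigma_{\alpha})=WSA_{\alpha}$, and varietal closure yields $\mathbf{SP}\{\wp(^{\alpha}U):U\}\subseteq WSA_{\alpha}$. The other direction is essentially the content of Corollary \ref{v2}: given $\A\in SA_{\alpha}$ and non-zero $a\in A$, Theorem \ref{1}(1) together with Lemma \ref{f} produces a weak set algebra $\B$ and a homomorphism $f:\A\to\B$ with $f(a)\neq 0$; then Theorem \ref{weak} converts $\B$ via a strong sub-base isomorphism into a genuine set algebra $\C\subseteq\wp(^{\alpha}U_a)$, and composing gives a homomorphism $\A\to\wp(^{\alpha}U_a)$ not killing $a$. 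Indexing over all non-zero $a$ embeds $\A$ subdirectly into $\prod_{a\neq 0}\wp(^{\alpha}U_a)$.

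The substantive work has already happened earlier in the paper: the axiomatic completeness $WSA_{\alpha}=SA_{\alpha}$ (which is a non-trivial lifting argument in Theorem \ref{r}) and the ultraproduct squaring-up construction in Theorem \ref{weak}. What remains here is purely bookkeeping, so I do not anticipate a genuine obstacle; the only pitfall worth flagging is being careful that the three different ``generating'' or ``base'' classes really are comparable, which is precisely why Theorem \ref{weak} is needed to bridge weak-set units and square units and why the local squareness of $^{\alpha}\alpha^{(Id)}$ must be verified explicitly.
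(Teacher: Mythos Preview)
Your proposal is correct and follows essentially the same approach as the paper, which simply states ``We then readily obtain'' and gives no explicit argument. You have spelled out in detail precisely the bookkeeping the paper leaves implicit: the first equality is Theorem \ref{1}(1), the third is the content of Corollary \ref{v2} (via Theorem \ref{weak}), and the middle equality follows from the relativization argument (the infinite-dimensional analogue of Theorem \ref{relativization}) together with the observation that $^{\alpha}\alpha^{(Id)}$ is itself locally square.
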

Here we show that the class of subdirect prouct of Pinter's algebras is not a variety, this is not proved by Sagi.
\begin{theorem}\label{notvariety} For infinite ordinals $\alpha$, $RPA_{\alpha}$ is not a variety.
\end{theorem}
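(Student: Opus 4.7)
The plan is to reduce to the finite-dimensional case. By hypothesis earlier in the paper (following Sagi \cite{sagiphd}), $RPA_n$ is already known to be a proper quasi-variety for some finite $n\ge 2$: there is a concrete $\A_n\in RPA_n$ together with a congruence $\theta_n$ on $\A_n$ such that $\A_n/\theta_n\notin RPA_n$. Since $RPA_\alpha=\mathbf{SP}\{$full Pinter set algebras of dimension $\alpha\}$ is closed by definition under $\mathbf S$ and $\mathbf P$, it suffices to transport Sagi's finite-dimensional witness to dimension $\alpha$ and show that the lifted quotient is not representable.

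First, I would dilate $\A_n$ to an $\alpha$-dimensional Pinter's algebra. Fix an injection $\rho:n\to\alpha$. Pick a full Pinter set algebra $\F=\wp({}^nU)$ in which $\A_n$ embeds, and form the corresponding $\alpha$-dimensional full Pinter set algebra $\bar\F=\wp({}^\alpha U)$. The map $\Rd^\rho\bar\F\to\F$ that restricts a relation on $^\alpha U$ to its $\rho$-indexed coordinates is a surjective Pinter's algebra homomorphism (the only operations are replacements, and they commute with this restriction). Pull $\A_n$ back along this map to obtain a subalgebra $\bar{\A}\subseteq\bar\F$ with $\Rd^\rho\bar{\A}\cong\A_n$; since $\bar\F\in RPA_\alpha$, also $\bar{\A}\in RPA_\alpha$. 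Lift $\theta_n$ to the congruence $\bar\theta$ on $\bar\A$ whose classes project onto those of $\theta_n$ and are constant in all coordinates outside $\mathrm{rng}(\rho)$.

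Next, I would show that $\bar\A/\bar\theta$ is not in $RPA_\alpha$. Suppose for contradiction there were an embedding $f:\bar\A/\bar\theta\to\prod_{i\in I}\wp({}^\alpha U_i)$. Applying the reduct functor $\Rd^\rho$ to both sides and composing with the coordinate restriction maps $\wp({}^\alpha U_i)\to\wp({}^n U_i)$ yields an embedding of $\Rd^\rho(\bar\A/\bar\theta)$ into a product of full $n$-dimensional Pinter set algebras; hence $\Rd^\rho(\bar\A/\bar\theta)\in RPA_n$. But by construction $\Rd^\rho\bar\A\cong\A_n$ and the lifted congruence restricts to $\theta_n$, so $\Rd^\rho(\bar\A/\bar\theta)\cong\A_n/\theta_n$, contradicting Sagi's result that $\A_n/\theta_n\notin RPA_n$.

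The main obstacle is checking that the dilation and reduction diagrams really do commute with quotients in the way claimed, i.e.\ that the lifted congruence $\bar\theta$ is well-defined (this is where the absence of transpositions matters: without the $s_{ij}$ the replacement operations act coordinate-wise on the extra coordinates, so padding by the identity outside $\mathrm{rng}(\rho)$ is compatible with all operations in the signature) and that $\Rd^\rho$ sends $RPA_\alpha$ into $RPA_n$ (which reduces to the obvious fact that $\Rd^\rho$ of a full $\alpha$-dimensional Pinter set algebra with base $U$ embeds into $\wp({}^nU)$ via restriction to the coordinates in $\mathrm{rng}(\rho)$, after discarding the constantly-$0$ image). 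Once these compatibilities are verified, the non-representability of $\A_n/\theta_n$ propagates to $\bar\A/\bar\theta$, proving that $RPA_\alpha$ fails to be closed under $\mathbf H$ and hence is not a variety.
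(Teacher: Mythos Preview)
Your overall strategy—lift a finite-dimensional counterexample to dimension $\alpha$ and then reduce back—is the same idea as the paper's, but the ``dilation'' step as you describe it does not go through. Two concrete problems:

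First, the map you call ``restricts a relation on ${}^\alpha U$ to its $\rho$-indexed coordinates,'' i.e.\ $X\mapsto\{s\upharpoonright\mathrm{rng}(\rho):s\in X\}$, is not a Boolean homomorphism (it fails on complements whenever two sequences agreeing on $\mathrm{rng}(\rho)$ lie on opposite sides of $X$). The honest projection fixes a point $p$ on the extra coordinates and sends $X\mapsto\{t:(t,p)\in X\}$.

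Second, and more seriously, even with the correct projection the pulled-back algebra $\bar\A=\pi^{-1}(\A_n)$ is only a subalgebra of the $\rho$-\emph{reduct} $\Rd^\rho\bar\F$, not of $\bar\F$ in its full $\alpha$-dimensional signature. A replacement $s_i^j$ with $i\in\mathrm{rng}(\rho)$ and $j\notin\mathrm{rng}(\rho)$ does not leave $\bar\A$ invariant (try it on a cylinder $\bar a=\{s:s\upharpoonright n\in a\}$: the result depends on the $j$th coordinate and is no longer a cylinder over $n$). So your inference ``since $\bar\F\in RPA_\alpha$, also $\bar\A\in RPA_\alpha$'' is unjustified, and with it the whole construction of $\bar\theta$. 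The same closure problem blocks the alternative of taking the cylindric image of $\A_n$ rather than the preimage.

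The paper circumvents exactly this obstacle by arguing \emph{by contradiction}: assume $RPA_\alpha$ is a variety. One embeds $\A=\wp({}^nU)$ into $\Rd_n\B$ where $\B=\wp({}^\alpha U)$ is the \emph{full} set algebra (which certainly is in $RPA_\alpha$, no closure issue), extends the given ideal $I$ to $J=\Ig^\B I$, checks $J\cap A=I$, and then uses the variety hypothesis to get $\B/J\in RPA_\alpha$. The embedding $\A/I\hookrightarrow\Rd_n(\B/J)$ together with $\Rd_n\mathbf{SP}K_\alpha\subseteq\mathbf{SP}K_n$ forces every quotient of every full $n$-dimensional Pinter set algebra to lie in $RPA_n$, contradicting the known finite-dimensional failure. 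The contradiction format is what lets the paper work entirely inside the full $\wp({}^\alpha U)$ and avoid ever needing a small $\alpha$-closed lift of $\A_n$.
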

\begin{proof} Assume to the contrary that $RTA_{\alpha}$ is a variety and that $RTA_{\alpha}={\bf Mod}(\Sigma_{\alpha})$ for some (countable) 
schema
$\Sigma_{\alpha}.$ Fix $n\geq 2.$ We show that for any set $U$ and any ideal $I$ of $\A=\wp(^nU)$, we have $\A/I\in RTA_n$,
which is not possible since we know that there are relativized set algebras to permutable sets that are not in $RTA_n$.
Define $f:\A\to \wp(^{\alpha}U)$ by $f(X)=\{s\in {}^{\alpha}U: f\upharpoonright n\in X\}$. Then $f$ is an embedding of $\A$ into
$\Rd_n(\wp({}^nU))$, so that we can assume that
$\A\subseteq \Rd_n\B$, for some $\B\in RTA_{\alpha}.$ Let $I$ be an ideal of $\A$, and let $J=\Ig^{\B}I$. Then we claim that
$J\cap \A=I$. One inclusion is trivial; we need to show $J\cap \A\subseteq I$. Let $y\in A\cap J$. Then $y\in \Ig^{\B}I$ and so,
there is a term $\tau$, and $x_1,\ldots x_n\in I$ such that $y\leq \tau(x_1,\dots x_n)$. But $\tau(x_1,\ldots x_{n-1})\in I$ and $y\in A$,
hence $y\in I$, since ideals are closed downwards.
It follows that $\A/I$ embeds into $\Rd_n(\B/J)$
via $x/I\mapsto x/J$. The map is well defined since $I\subseteq J$, and it is one to one, because if $x,y\in A$, such that $x\delta y\in J$,
then $x\delta y\in I$, where $\delta$ denotes symmetric difference.
We have $\B/J\models \Sigma_{\alpha}$.

For $\beta$ an ordinal,
let $K_{\beta}$ denote the class of all full set algebras of dimension $\beta$. Then ${\bf SP}\Rd_nK_{\alpha}\subseteq {\bf SP}K_n$. It is enough to show
that $\Rd_nK_{\alpha}\subseteq {\bf SP}K_n$, and for that it suffices to show that that if $\A\subseteq \Rd_n(\wp({}^{\alpha}U))$,
then $\A$ is embeddable in $\wp(^nW)$, for some set $W$.
Let $\B=\wp({}^{\alpha}U)$. Just  take $W=U$ and define $g:\B\to \wp(^nU)$ by $g(X)=\{f\upharpoonright n: f\in X\}$.
Then $g\upharpoonright \A$ is the desired embedding.
Now let $\B'=\B/I$, then $\B'\in {\bf SP}K_{\alpha}$, so $\Rd_n\B'\in \Rd_n{\bf SP}K_{\alpha}={\bf SP}\Rd_nK_{\alpha}\subseteq {\bf SP}K_n$.
Hence $\A/I\in RTA_n$. But this cannot happen for all $\A\in K_n$ and we are done.
\end{proof}

Next we approach the issue of representations preserving infinitary joins and meets. But first a lemma.

\begin{lemma} Let $\A\in SA_{\alpha}$.
\begin{enumarab}
\item If $X\subseteq \A$, is such that $\sum X=0$ and there exists a representation $f:\A\to \wp(V)$, such that $\bigcap_{x\in X}f(x)=\emptyset$,
then for all $\tau\in {}^{\alpha}{\alpha}^{(Id)}$, $\sum_{x\in X} s_{\tau}x=\emptyset$.
\item  In particular, if $\A$ is completely representable, then for every $\tau\in {}^{\alpha}\alpha^{(Id)}$,
$s_{\tau}$ is completely additive.
\end{enumarab}
\end{lemma}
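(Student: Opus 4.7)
The plan is to derive both parts from one structural fact: in any concrete set algebra $\wp(V)$ with $V$ locally square, the operation $s_\tau$ is (via the description $s_\tau(Y)=\{q\in V:q\circ\tau\in Y\}$) a set-theoretic preimage, and hence commutes with arbitrary intersections and unions. Given how the witnessing condition $\bigcap_{x\in X}f(x)=\emptyset$ matches the conclusion, I read the hypothesis ``$\sum X=0$'' as the meet condition $\prod X=0$ (with the conclusion meaning $\prod_{x\in X}s_\tau x=0$); the dual formulation with $\sum X=1$ and $\bigcup_{x\in X}f(x)=V$ has an identical proof.

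For (1), first push the hypothesis through the representation. Because $f$ is a homomorphism and $s_\tau$ on $\wp(V)$ commutes with arbitrary intersections, one gets
$$\bigcap_{x\in X}f(s_\tau x)=\bigcap_{x\in X}s_\tau f(x)=s_\tau\Bigl(\bigcap_{x\in X}f(x)\Bigr)=s_\tau(\emptyset)=\emptyset.$$
Suppose toward contradiction that some $y\in\A$ with $y\neq 0$ satisfies $y\leq s_\tau x$ for every $x\in X$. Since $f$ is an injective homomorphism (the standing meaning of ``representation'' in the paper), $f(y)\neq\emptyset$, yet $f(y)\subseteq f(s_\tau x)$ for every $x\in X$, so $f(y)\subseteq \bigcap_{x\in X}f(s_\tau x)=\emptyset$. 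This contradiction gives $\prod_{x\in X}s_\tau x=0$.

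For (2), a complete representation $f:\A\to\wp(V)$ is by definition one for which $\bigcap_{x\in X}f(x)=f(\prod X)$ whenever $\prod X$ exists in $\A$; in particular $\prod X=0$ forces $\bigcap_{x\in X}f(x)=\emptyset$, so part (1) applies to every family with meet $0$ and yields $\prod_{x\in X}s_\tau x=0$. Complete additivity in the usual sense (preservation of arbitrary suprema) then follows by Boolean duality. Since $s_\tau$ is a Boolean endomorphism (from the axioms in Definition \ref{ax2}), $s_\tau(-z)=-s_\tau z$ and $s_\tau(b\cdot z)=s_\tau b\cdot s_\tau z$. Given $Y\subseteq\A$ with $\sum Y=b$, the family $Y'=\{b\cdot -y:y\in Y\}$ has meet $0$, so the previous sentence gives $\prod_{y\in Y}s_\tau(b\cdot -y)=0$; rearranging this identity produces $\sum_{y\in Y}s_\tau y=s_\tau b$, which is exactly complete additivity of $s_\tau$.

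The only point that requires real care is the injectivity of the representation in (1); this is what lets the contradiction close, and without it the argument would only bound $f(y)$ from above without forcing $y=0$. Once injectivity is in hand, every step reduces either to set-theoretic manipulation of preimage operations in $\wp(V)$, or to the Boolean-endomorphism structure of $s_\tau$ inherited from $\Sigma'_n$, so I do not anticipate any further obstacle.
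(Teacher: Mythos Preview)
Your proof is correct and follows essentially the same route as the paper. The paper's own proof just says ``Like the finite dimensional case,'' referring to the argument at the start of Theorem~\ref{counter}; that argument is the join/union dual of what you wrote (assume an upper bound $y<1$ for $\{s_\tau x:x\in X\}$, push through the representation, and use injectivity of $f$ to reach a contradiction), and you explicitly note the two formulations are interchangeable. Your handling of the typographical mismatch in the statement (reading $\sum X=0$ as $\prod X=0$ to match $\bigcap f(x)=\emptyset$) is appropriate, and your derivation in part~(2) of full complete additivity from ``$\prod X=0\Rightarrow\prod s_\tau X=0$'' via the auxiliary family $\{b\cdot(-y):y\in Y\}$ is a clean way to spell out what the paper leaves implicit.
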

\begin{proof} Like the finite dimensional case.
\end{proof}
\begin{theorem}\label{counterinfinite} For any $\alpha\geq \omega$, there is an $\A\in  SA_{\alpha}$, and  $S\subseteq \A$, such that $\sum S$
is not preserved by $s_0^1$. In particular, the omitting types theorem fails for our multi-modal logic.
\end{theorem}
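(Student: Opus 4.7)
The plan is to lift the counterexample from Theorem \ref{counter} to the infinite dimensional setting, using the weak set unit $V={}^{\alpha}U^{(p)}$ in place of the square ${}^nU$. The whole point is that although $\alpha$ is infinite, a weak set still consists of sequences that agree cofinitely with a fixed sequence, so finitely supported rectangular building blocks behave exactly as they did in the two (or $n$)-dimensional case. Once the analogous algebra and witness set $S$ are in place, the failure of $s_0^1$ to preserve $\sum S$ will be witnessed by literally the same calculation, and the ``in particular'' clause about the omitting types theorem will follow from the failure of complete additivity, essentially by the preceding lemma.

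Concretely, I would fix a countable set $U$ and a countable atomless Boolean subalgebra $\B\subseteq\wp(U)$ that separates points of $U$, and take the weak set $V={}^{\alpha}U^{(p)}$ for some $p\in{}^{\alpha}U$. For every finite $J\subseteq\alpha$ and every assignment of members of $B$ to the coordinates in $J$, I form the ``generalized rectangle'' consisting of those $s\in V$ whose restriction to $J$ lies in the corresponding product; let $R$ be the set of all such rectangles and $\A$ the set of finite unions of elements of $R$. Then I take
\[
S=\{R_X:X\in B\},\qquad R_X=\{s\in V:s_0\in X,\ s_1\in\sim X\}.
\]
The first task is to check that $\A$ is a subalgebra of $\wp(V)$: closure under $\cap$ follows from pointwise intersection of rectangles, closure under $\sim$ (relative to $V$) is the usual cylindrical expansion, and closure under each $s_{ij}$ and $s_i^j$ is a matter of permuting or identifying the finitely many indexed coordinates, possibly enlarging the finite support $J$ by one.

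The core computation, which is the essence of the proof, parrots the finite dimensional argument: first, $R_X\cap D_{01}=\emptyset$ for every $X\in B$ (since otherwise $X\cap\sim X$ would contain some $u$, which is fine, but only a singleton-rectangle below $D_{01}$ could arise, and $\B$ being atomless forbids this) — so $\bigcup S=V\setminus D_{01}$, and any element of $\A$ strictly below $V$ but above $V\setminus D_{01}$ would have to contain a nonempty rectangle inside $D_{01}$, again ruled out by atomlessness; hence $\sum^{\A}S=V$. Second, $s_0^1(R_X)=\{s\in V:s\circ[0|1]\in R_X\}=\{s\in V:s_1\in X\cap\sim X\}=\emptyset$, so $\sum_{x\in S}s_0^1(x)=0$ while $s_0^1(\sum S)=s_0^1(1)=1$. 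This establishes the main assertion.

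For the ``in particular'' clause, I would argue by contradiction exactly as in the finite dimensional Theorem~\ref{counter}: since $\A$ is countable, if the omitting types theorem held for $\L_{SA_{\alpha}}$ then the non-principal type $\{-x:x\in S\}$ (with $\prod=-\sum S=0$) could be omitted by some representation $f:\A\to\wp(W)$, i.e.\ $\bigcup_{x\in S}f(x)=W$; but then by complete additivity of $s_0^1$ on the concrete algebra $\wp(W)$ and the homomorphism property,
\[
W=s_0^1(W)=\bigcup_{x\in S}s_0^1(f(x))=\bigcup_{x\in S}f(s_0^1 x)=f(0)=\emptyset,
\]
a contradiction. The main obstacle I expect is the bookkeeping for closure under the infinite family of substitutions — one has to be comfortable with rectangles whose support is an arbitrary finite subset of $\alpha$ (not an initial segment) and verify that both $s_{ij}$ and $s_i^j$ merely shuffle or coalesce the support, preserving membership in $\A$. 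Everything else is a straightforward transcription of the two-dimensional template to the weak-set context.
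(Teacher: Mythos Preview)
Your proposal is correct and takes essentially the same approach as the paper: the paper also lifts the finite-dimensional construction by forming the algebra of finite unions of rectangles $\prod_{i\in\alpha}X_i$ with each $X_i\in\B$ and $X_i=U$ for all but finitely many $i$, and uses the same witness set $S=\{X\times{\sim}X\times U\times U\times\cdots:X\in B\}$, then simply refers back to Theorem~\ref{counter} for the computation. The only (inessential) difference is that the paper works inside the full square ${}^{\alpha}U$ rather than a weak set ${}^{\alpha}U^{(p)}$; both give subalgebras of set algebras in $SA_{\alpha}$, and your closure and $\sum$-computation go through verbatim in either setting.
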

\begin{proof}
The second part follows from the previous lemma. Now we prove the first part.
Let  $\B$ be the Stone representation of some atomless Boolean algebra, with unit $U$ in the Stone representation.
Let $$R=\{\times_{i\in \alpha} X_i, X_i\in \B \text { and $X_i=U$ for all but finitely many $i$} \}$$
and
$$A=\{\bigcup S: S\subseteq R: |S|<\omega\}$$
$$S=\{X\times \sim X\times \times_{i>2} U_i: X\in B\}.$$
Then one proceeds exactly like the finite dimensional case, theorem \ref{counter} showing that the sum $\sum S$ is not preserved under $s_0^1$.
\end{proof}
Like the finite dimensional case,  adapting the counterexample to infinite dimensions, we have:

\begin{theorem}\label{counterinfinite2} There is an atomic $\A\in SA_{\alpha}$ such that $\A$ is not completely representable.
\end{theorem}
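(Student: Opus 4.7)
The plan is to adapt the finite-dimensional construction of Theorem \ref{counter2} to the infinite setting, using a weak space as unit and an ultrafilter on $\mathbb{Z}^+$. Fix $\alpha \geq \omega$, choose an infinite base $U$ and a sequence $p \in {}^{\alpha}U$, and set $V = {}^{\alpha}U^{(p)}$. The first task is to produce a countable partition $\{Q_k : k \in \omega\}$ of $V$ with $Q_0 = \{s \in V : s_0 = s_1\}$ and each $Q_k$ ($k \geq 1$) disjoint from the diagonal and invariant (as a subset of $V$) under composition with every $[i,j]$ and every $[i|j]$, in the sense required for closure of the ensuing algebra. The finite-support property of $V$ makes this possible: every $s \in V$ differs from $p$ on only finitely many coordinates, so a $Q_k$ defined by a permutation-symmetric arithmetic invariant of the finitely many non-$p$ entries of $s$, together with ``$s_0 \neq s_1$'', can be arranged to meet the invariance requirements.

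Next, fix a non-principal ultrafilter $F$ on $\mathcal{P}(\mathbb{Z}^+)$ and set
$$R_X = \begin{cases} \bigcup\{Q_k : k \in X\}, & X \notin F,\\ \bigcup\{Q_k : k \in X \cup \{0\}\}, & X \in F,\end{cases}$$
and let $\A = \{R_X : X \subseteq \mathbb{Z}^+\}$. Closure of $\A$ under finite unions and under complement relative to $R_{\mathbb{Z}^+}$ uses only the ultrafilter property of $F$, and closure under $S_{ij}$ and $S_i^j$ uses only the invariance of the $Q_k$ and the identity $Q_0 = D_{01} \cap V$; the calculations are verbatim copies of those in Theorem \ref{counter2}. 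Consequently $\A \in SA_\alpha$ is an atomic algebra whose atoms are exactly $R_{\{k\}} = Q_k$ for $k \geq 1$ (since $F$ is non-principal, so $\{k\} \notin F$).

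One then computes $\sum_{k \geq 1} R_{\{k\}} = R_{\mathbb{Z}^+}$ (the right hand side contains $Q_0$ as well because $\mathbb{Z}^+ \in F$), while $S_0^1 R_{\{k\}} = \emptyset$ for every $k \geq 1$ because $Q_k$ is disjoint from $D_{01}$ and symmetric in the required sense; on the other hand $S_0^1 R_{\mathbb{Z}^+} = R_{\mathbb{Z}^+}$. Hence $\sum_{k \geq 1} s_0^1 R_{\{k\}} = 0 \neq R_{\mathbb{Z}^+} = s_0^1 \sum_{k \geq 1} R_{\{k\}}$, so $s_0^1$ fails to be completely additive on $\A$, and by the lemma preceding Theorem \ref{counterinfinite} this rules out any complete representation of $\A$.

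The principal obstacle is producing the partition $\{Q_k\}$: in infinite dimensions the transpositions and replacements are indexed by all pairs of ordinals in $\alpha$, not just in some finite $n$, so the invariance demanded of each $Q_k$ is genuinely stronger than in the finite-dimensional case. Restricting the unit to the weak space $V$ is what rescues the argument, since every relevant operation moves only finitely many coordinates of any given $s \in V$ at a time, and the invariant defining $Q_k$ can then be chosen to depend only on the finite ``non-$p$ part'' of $s$ in a manifestly transposition-symmetric way.
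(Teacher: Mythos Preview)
Your approach is essentially the paper's: work over a weak space, partition it into $Q_0=D_{01}\cap V$ and symmetric pieces $Q_k$ ($k\geq 1$), push the partition through a non-principal ultrafilter on $\mathbb Z^+$ to form $\A=\{R_X\}$, and observe that $s_0^1$ fails complete additivity exactly as in Theorem~\ref{counter2}. The only difference is that the paper makes the partition explicit---it takes $V={}^{\alpha}\alpha^{(\mathbf 0)}$ and $Q_n=\{s\in V: s_0\neq s_1,\ \sum_i s_i=n\}$ (a finite sum by the weak-space condition)---whereas you only describe the invariant abstractly; supplying this concrete choice closes the one gap in your write-up.
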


\begin{proof} First it is clear that if $V$ is any weak space, then $\wp(V)\models \Sigma$.
Let $(Q_n: n\in \omega)$ be a sequence $\alpha$-ary relations such that

\begin{enumroman}

\item $(Q_n: n\in \omega)$ is a partition of
$$V={}^{\alpha}\alpha^{(\bold 0)}=\{s\in {}^{\alpha}\alpha: |\{i: s_i\neq 0\}|<\omega\}.$$



\item Each $Q_n$ is symmetric.
\end{enumroman}
Take $Q_0=\{s\in V: s_0=s_1\}$, and for each $n\in \omega\sim 0$, take $Q_n=\{s\in {}^{\alpha}\omega^{({\bold 0})}: s_0\neq s_1, \sum s_i=n\}.$
(Note that this is a finite sum).
Clearly for $n\neq m$, we have $Q_n\cap Q_m=\emptyset$, and $\bigcup Q_n=V.$
Furthermore, obviously each $Q_n$ is symmetric, that is  $S_{[i,j]}Q_n=Q_n$ for all $i,j\in \alpha$.

Now fix $F$ a non-principal ultrafilter on $\mathcal{P}(\mathbb{Z}^+)$. For each $X\subseteq \mathbb{Z}^+$, define
\[
 R_X =
  \begin{cases}
   \bigcup \{Q_n: n\in X\} & \text { if }X\notin F, \\
   \bigcup \{Q_n: n\in X\cup \{0\}\}      &  \text { if } X\in F
  \end{cases}
\]
Let $$\A=\{R_X: X\subseteq \mathbb{Z}^+\}.$$
Then $\A$ is an atomic set algebra, and its atoms are $R_{\{n\}}=Q_n$ for $n\in \mathbb{Z}^+$.
(Since $F$ is non-principal, so $\{n\}\notin F$ for every $n$.
Then one proceeds exactly as in the finite dimensional case, theorem \ref{counter2}.
\end{proof}

Let $CRSA_{\alpha}$ be the class of completely representable algebras of dimension $\alpha$, then we have

\begin{theorem} For $\alpha\geq \omega$, $CRSA_{\alpha}$ is elementary that is axiomatized by a finite 
schema
\end{theorem}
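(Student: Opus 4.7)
The plan is to lift Theorem~\ref{elementary} schema-by-schema. For each pair $i\ne j$ in $\alpha$, introduce the first-order formula
$$\psi_{i,j}\colon\quad y\ne 0\,\to\,\exists x\bigl(\At(x)\wedge s_i^j x\ne 0\wedge s_i^j x\le y\bigr),$$
and put $\Sigma^+_\alpha=\Sigma_\alpha\cup\{\psi_{i,j}:i\ne j<\alpha\}$. Just as with $\Sigma_\alpha$ itself, $\Sigma^+_\alpha$ is a finite schema in the two-sorted sense used throughout the infinite-dimensional part of the paper: a strictly finite subset of $\Sigma^+_\omega$, namely the defining axioms of $SA_\omega$ together with the single scheme $\psi_{0,1}$, generates $\Sigma^+_\alpha$ under substitution of injections $\omega\to\alpha$. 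The goal is to verify $CRSA_\alpha=\mathbf{Mod}(\Sigma^+_\alpha)$.

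For the forward inclusion, let $\A\in CRSA_\alpha$. The lemma stated immediately before Theorem~\ref{counterinfinite} is the infinite-dimensional analogue of Theorem~\ref{converse}, and tells us that any complete representation of $\A$ forces $\sum_{x\in\At\A}s_\tau x=1$ for every $\tau\in{}^\alpha\alpha^{(Id)}$. Specializing to $\tau=[i|j]$ says that $\{s_i^j x:x\in\At\A\}$ is dense below $1^\A$, which is precisely what $\psi_{i,j}$ asserts.

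For the reverse inclusion, take $\A\models\Sigma^+_\alpha$. The formula $\psi_{i,j}$ yields $\sum_{x\in\At\A}s_i^j x=1$ for all distinct $i,j$, and the standard atomic-Boolean trick then delivers complete additivity of each $s_i^j$ in $\A$. The transposition substitutions $s_{[i,j]}$ are self-conjugate, hence already completely additive. Since every $\tau\in{}^\alpha\alpha^{(Id)}$ has finite support and factors as a finite product of transpositions and replacements, $s_\tau$ is completely additive for all such $\tau$, and in particular $\sum_{x\in\At\A}s_\tau x=1$ for every $\tau\in{}^\alpha\alpha^{(Id)}$. I would then invoke the infinite-dimensional companion of the final theorem of Section~3 (atomic plus completely additive implies completely representable), proved by the same Baire-category construction used in the omitting-types results above: for each nonzero $a\in\A$ pick an ultrafilter $F_a\ni a$ in the Stone space $S$ of $\A$ avoiding each nowhere-dense set $S\sim\bigcup_{x\in\At\A}N_{s_\tau x}$, read off the homomorphism $h_a(z)=\{\tau:s_\tau z\in F_a\}$ into $\wp({}^\alpha\alpha^{(Id)})$, and paste the $h_a$'s into a single complete representation on a disjoint-union unit.

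The main obstacle is the Baire step when $\alpha$ is uncountable, since then ${}^\alpha\alpha^{(Id)}$ has cardinality $|\alpha|$ and a priori there are too many nowhere-dense sets for Baire category on $S$ to apply directly. The remedy, implicit in the paper's handling of weak-set units, is that for any fixed atom $x$ the element $s_\tau x$ depends only on $\tau\restr J$ for a finite set $J$ determined by the finite-support term computing $s_\tau x$; reparameterizing the $\tau$'s by finite partial maps, one is therefore unioning only countably many nowhere-dense sets per atom, and Baire category applies just as in the countable case. Verifying this support bookkeeping is the only substantive technical content not already present in the finite-dimensional proof of Theorem~\ref{elementary}.
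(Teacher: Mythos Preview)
Your overall strategy coincides with the paper's: add the schemata $\psi_{i,j}$ to $\Sigma_\alpha$, argue that these force $\sum_{x\in\At\A}s_\tau x=1$ for every finitary $\tau$, and then build a complete representation by the ultrafilter method. Up through the penultimate paragraph your argument is essentially the paper's own (which simply says ``the rest is like the finite dimensional case'').

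The problem is your final paragraph. The ``obstacle'' you identify is one you have created for yourself by reaching for Baire category, and your proposed remedy does not work. The number of transformations $\tau\in{}^{\alpha}\alpha^{(Id)}$ is $|\alpha|$ when $\alpha$ is uncountable; the support of $\tau$ is a property of $\tau$, not of the atom $x$, so your ``support bookkeeping'' does not reduce the family $\{G_\tau:\tau\in{}^{\alpha}\alpha^{(Id)}\}$ to a countable one. There is no way to salvage a Baire-category argument here in general.

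The correct move---which is exactly what the paper does in the finite-dimensional complete-representation theorem and again in the infinite-dimensional proof for canonical extensions---is to take $F_a$ to be the \emph{principal} ultrafilter generated by an atom below $a$. Principal ultrafilters are isolated points of the Stone space, and an isolated point lies outside \emph{every} nowhere-dense set, with no restriction whatsoever on how many such sets there are. Thus $F_a\notin\bigcup_{\tau}G_\tau$ automatically, and the homomorphisms $h_a$ assemble into a complete representation exactly as in the finite case. No cardinality bookkeeping is needed.
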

\begin{proof} Let $\At(x)$ is the formula
$x\neq 0\land (\forall y)(y\leq x\to y=0\lor y=x)$. For distinct $i,j<\alpha$ let $\psi_{i,j}$ be the formula:
$y\neq 0\to \exists x(\At(x)\land s_i^jx\neq 0\land s_i^jx\leq y).$ Let $\Sigma$ be obtained from $\Sigma_{\alpha}$ 
by adding $\psi_{i,j}$
for every distinct $i,j\in \alpha$. These axioms force additivity of the operations $s_i^j$ for every $i,j\in \alpha$. 
The rest is like the finite dimensional 
case.
\end{proof}
The folowing theorem can be easilly destilled from the literature.
\begin{theorem} $SA_{\alpha}$ is Sahlqvist variety, hence it is canonical, $\Str SA_{\alpha}$ is elementary and 
${\bf S}\Cm(\Str SA_{\alpha})=SA_{\alpha}.$
\end{theorem}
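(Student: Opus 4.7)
The plan is to show that every equation in the axiomatization $\Sigma_\alpha$ (equivalently, every instance of a finite schema) can be written as a Sahlqvist equation for the BAO signature in which all non-Boolean operators are the unary additive operators $s_{ij}$, $s_i^j$. Once this is checked, the remaining assertions follow from Sahlqvist's theorem for BAOs (as formulated, for instance, by de Rijke and Venema, cf.\ the general references in \cite{v3}): every Sahlqvist equation is canonical and has a first-order frame correspondent, so the resulting variety is canonical and its class of atom structures is elementary.

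To carry out the first step, I would partition the axioms of $\Sigma_\alpha$ into three groups. (a) The Boolean axioms are trivially Sahlqvist. (b) The axioms stating that each $s_{ij}$ (resp.\ $s_i^j$) preserves joins and meets. Additivity $s_{ij}(x\lor y)\le s_{ij}x\lor s_{ij}y$ is the familiar normal-diamond axiom, and is Sahlqvist on the nose. The delicate direction of multiplicativity is $s_{ij}x\land s_{ij}y\le s_{ij}(x\land y)$; here the antecedent is a conjunction of diamonds applied to variables (\emph{i.e.}\ boxed atoms with an empty box), and the consequent is positive, so the equation is of Sahlqvist shape. (c) All the remaining axioms 3--13 of Definition~\ref{ax2} are equations of the form $t_1(x)=t_2(x)$ where each $t_i$ is a composition of the unary operators $s_{ij},s_i^j$ applied to a single variable $x$. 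Split into two inequalities, each is of the form ``sequence of diamonds applied to a boxed atom $\le$ positive term'', which is Sahlqvist.

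With this verification in hand, Sahlqvist's theorem immediately yields two facts: first, each axiom of $\Sigma_\alpha$ is preserved under canonical extensions, so $SA_\alpha$ is canonical; second, each axiom has a first-order correspondent on atom structures, so the class of frames $\F$ with $\Cm\F\models\Sigma_\alpha$ is elementary, showing $\Str SA_\alpha$ is elementary. Note that the schema-style indexing over $\alpha$ causes no difficulty: each individual instance is a Sahlqvist equation, and the intersection of first-order definable classes of frames remains first-order definable (by the set of correspondents of all instances).

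Finally, to get ${\bf S}\Cm(\Str SA_\alpha)=SA_\alpha$, I would argue both inclusions. The inclusion $\supseteq$ uses canonicity: for $\A\in SA_\alpha$ the canonical extension $\A^\sigma$ lies in $SA_\alpha$ (by step two), is complete and atomic, so $\At\A^\sigma\in\Str SA_\alpha$ and $\A\hookrightarrow\A^\sigma=\Cm(\At\A^\sigma)\in\Cm(\Str SA_\alpha)$. The reverse inclusion $\subseteq$ is simply the definition of $\Str SA_\alpha$ together with the fact that $SA_\alpha$ is a variety and so closed under ${\bf S}$. The main (and really the only) obstacle is the routine but nontrivial bookkeeping of step (c): several of the axioms interleave replacements and transpositions, and one must be careful to present each as a Sahlqvist inequality between strongly positive terms rather than relying on equational interderivability alone.
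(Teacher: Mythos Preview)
Your proposal is correct and in fact supplies considerably more detail than the paper itself, which offers no argument beyond the remark that the result ``can be easily distilled from the literature.'' Your verification that each axiom in $\Sigma_\alpha$ is a Sahlqvist equation is exactly the substance behind that remark: the Boolean axioms are trivial, the join/meet-preservation axioms for $s_{ij}$ split into inequalities whose antecedents are conjunctions of diamonds-of-variables and whose consequents are positive, and axioms 3--13 are equalities between compositions of unary diamonds applied to a single variable, hence each direction is Sahlqvist on the nose. The appeal to the general Sahlqvist machinery (canonicity, first-order correspondence, hence $\Str SA_\alpha$ elementary) and the two-inclusion argument for ${\bf S}\Cm(\Str SA_\alpha)=SA_\alpha$ via canonical embeddings are standard and correctly executed.

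One minor point worth tidying: in your group (b) you write ``each $s_{ij}$ (resp.\ $s_i^j$) preserves joins and meets,'' but the paper's axiom~2 only asserts this for the transposition operators $s_{ij}$; the replacements $s_i^j$ are treated as ordinary (finitely additive, normal) BAO operators without an explicit meet-preservation axiom. This does not affect your argument---finite additivity of $s_i^j$ is all you need for the Sahlqvist framework to apply, and that is either part of the ambient BAO definition or follows from the paper's earlier stipulation that all substitutions are Boolean endomorphisms---but the parenthetical should be dropped or qualified.
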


We know that if $\A$ is representable on a weak unit, then it is representable on a square one. But for complete representability this is not at all clear,
because the isomorphism defined in \ref{weak} might not preserve arbitrary joins. 
For canonical extensions, we guarantee complete representations.
\begin{theorem} Let $\A\in SA_{\alpha}$. Then $\A^+$ is completely representable on a weak unit.
\end{theorem}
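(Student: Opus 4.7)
The plan is to generalize the $\omega$-saturated-model argument of Theorem \ref{canonical} to the infinite-dimensional setting, replacing the square unit by a weak one. First I would set up a first-order language $L(\A)$ with, for each $a\in \A$, a relation symbol $R_a$ of ``arity $\alpha$''; because every operation of $\A$ is finite-support (each $s_i^j$ and $s_{ij}$ only affects two indices), every $a\in \A$ has a finite essential support, so these can be viewed as formulas of finite arity. Form the theory $T(\A)$ with the usual axioms $\sigma_\vee(a,b,a\vee b)$, $\sigma_\neg(a,-a)$, $\sigma_\tau(a, s_\tau a)$ for each substitution generator $\tau$, together with non-emptiness axioms $\sigma_{\neq 0}(a) = \exists\bar{x}\, R_a(\bar{x})$ for every non-zero $a$. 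Since $\A$ has a representation on a weak unit by Theorem \ref{1}, $T(\A)$ is consistent.

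Next, let $M$ be an $\omega$-saturated model of $T(\A)$, fix a reference sequence $p\in {}^{\alpha}M$, and take $V = {}^{\alpha}M^{(p)}$. For each $\bar{x}\in V$, the set $f_{\bar{x}} = \{a\in \A: M\models R_a(\bar{x})\}$ is a Boolean ultrafilter of $\A$ commuting with substitutions (by the $\sigma_\tau$ axioms). Define $h:\A^+\to \wp(V)$ by $h(S) = \{\bar{x}\in V: f_{\bar{x}}\in S\}$. That $h$ preserves Boolean operations and arbitrary joins is routine, just as in Theorem \ref{canonical}; preservation of substitutions follows because the canonical-extension operations on $\A^+$ are determined by their action on ultrafilters.

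The main obstacle is injectivity of $h$ restricted to atoms of $\A^+$: given an ultrafilter $\mu$ of $\A$, one must realize $\mu$ as $f_{\bar{x}}$ for some $\bar{x}\in V$, that is, for some $\bar{x}$ agreeing with $p$ cofinitely. The type $\Phi_\mu(\bar{x}) = \{R_a(\bar{x}) : a\in \mu\}$ is finitely satisfiable in $M$ by $\sigma_{\neq 0}$ and $\sigma_\vee$, so $\omega$-saturation yields a realizing tuple in ${}^{\alpha}M$. To push the realization into the weak unit $V$, I would expand $L(\A)$ by constants $c_i$ naming $p(i)$ and try to realize the augmented type $\Phi_\mu(\bar{x}) \cup \{x_i = c_i : i \notin J\}$ for some finite $J \subseteq \alpha$ containing the supports involved. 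Since each $R_a$ depends only on finitely many coordinates, any finite fragment of the augmented type can be satisfied by gluing a finite realization to $p$ on the complement, and $\omega$-saturation then supplies a realization in $V$. Choosing $p$ cleverly---for instance as the ``generic'' sequence associated to a base ultrafilter, so that orbits under the finite-support permutation action acting on ultrafilters suffice to cover all atoms of $\A^+$ simultaneously---is the most delicate step and the main technical obstacle, since a single weak unit over one fixed $p$ must accommodate all ultrafilters of $\A$ at once.
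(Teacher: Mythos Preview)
Your approach has a genuine gap at the very first step. You assert that ``every $a\in\A$ has a finite essential support'' because each operation $s_i^j$, $s_{ij}$ moves only two indices. This inference is invalid: the fact that the \emph{operations} have finite support says nothing about the \emph{elements}. For a concrete counterexample, in the full set algebra $\wp({}^{\omega}U)$ with $|U|\geq 2$ take $a=\{s\in{}^{\omega}U: s_n=u_0\text{ for all even }n\}$; then $s_{[2k,2k+1]}a\neq a$ for every $k$, so $a$ has no finite support. Since $SA_\alpha$ contains such algebras, you cannot in general interpret the $R_a$ as finite-arity relations, and the type $\Phi_\mu(\bar{x})=\{R_a(\bar{x}):a\in\mu\}$ genuinely involves infinitely many free variables. $\omega$-saturation gives you nothing for such types, so the injectivity argument collapses. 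You yourself flag the ``single weak unit over one fixed $p$'' problem as the main obstacle and leave it unresolved; combined with the support issue, the proposal does not go through.

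The paper's proof avoids all of this by observing that no saturation is needed. The canonical extension $\A^+=\Cm\Uf\A$ is a \emph{full} complex algebra, so every $s_\tau$ is automatically completely additive there, and $\A^+$ is atomic with atoms the principal ultrafilters of $\A$. Hence $\sum_{x\in\At\A^+}s_\tau x=1$ for every finite-support $\tau$, and one simply runs the principal-ultrafilter argument already established for atomic completely additive algebras: for each nonzero $a$ pick an atom below it, let $F$ be the principal ultrafilter it generates, and set $h(x)=\{\tau\in{}^{\alpha}\alpha^{(Id)}:s_\tau x\in F\}$. This lands directly in the weak space ${}^{\alpha}\alpha^{(Id)}$ with no model-theoretic detour. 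The moral is that for canonical extensions the complete additivity comes for free, so the harder machinery you are reaching for is unnecessary.
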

\begin{proof} Let $S$ be the Stone space of $\A$, and for $a\in \A$, let $N_a$ denote the clopen set consisting of all ultrafilters containing $\A$.
The idea is that the operations are completely additive in the canonical extension. Indeed,  for $\tau\in {}^{\alpha}\alpha^{(Id)}$, we have
$$s_{\tau}\sum X=s_{\tau}\bigcup X=\bigcup s_{\tau}X=\sum s_{\tau}X.$$ 
(Indeed this is true for any full complex algebra of an atom structure, and $\A^+=\Cm\Uf \A$.)
In particular, since $\sum \At\A=1$, because $\A$ is atomic, we have $\sum s_{\tau}\At\A=1$, for each $\tau$.
Then we proceed as the finite dimensional case for transposition algebras.
Given any such  $\tau$, let $G(\At \A, \tau)$ be the following no where dense subset of the Stone space of $\A$:
$$G(\At \A, \tau)=S\sim \bigcup N_{s_{\tau}x}.$$
Now given non-zero $a$, let $F$ be a principal ultrafilter generated by an atom below $a$.
Then $F\notin \bigcup_{\tau\in {}^{\alpha}\alpha^{(Id)}} G(\At\A, \tau)$, and the map $h$ defined 
via $x\mapsto \{\tau\in {}^{\alpha}\alpha^{(Id)}: s_{\tau}x\in F\}$, as can easily be checked, 
establishes the complete representation.
\end{proof}
We do not know whether canonical extensions are completely representable on square units.

\begin{theorem} For any finite $\beta$, $\Fr_{\beta}SA_{\alpha}$ is infinite. Furthermore
If $\beta$ is infinite, then $\Fr_{\beta}SA_{\alpha}$ is atomless. In particular, $SA_{\alpha}$ is not locally finite. 
\end{theorem}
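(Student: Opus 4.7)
The plan has three parts, relying on the universal property of the free algebra together with the rich family of substitutions available in the infinite-dimensional setting.

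First I would establish that $\Fr_1 SA_\alpha$ is infinite; since $\Fr_1 SA_\alpha$ embeds into $\Fr_\beta SA_\alpha$ for any $\beta\geq 1$, this suffices for the first part. Let $x$ be the free generator. In the full set algebra $\wp({}^\alpha 2)$ set $a=\{s\in{}^\alpha 2 : s(0)=0\}$, and use the universal property to obtain a homomorphism $h:\Fr_1 SA_\alpha\to\wp({}^\alpha 2)$ with $h(x)=a$. A direct computation from the definition of $s_\tau$ shows $h(s_{[0,i]}x)=\{q\in{}^\alpha 2 : q(i)=0\}$. These subsets are pairwise distinct as $i$ ranges over $\alpha\setminus\{0\}$, so the elements $s_{[0,i]}x$ are pairwise distinct in $\Fr_1 SA_\alpha$. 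Since $\alpha$ is infinite, we are done.

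Second, for infinite $\beta$ I would prove atomlessness via a product-of-representations argument. Let $b\in\Fr_\beta SA_\alpha$ be non-zero. Then $b\in\Sg^{\Fr_\beta SA_\alpha}\{x_i : i\in X_0\}$ for some finite $X_0\subseteq\beta$. Pick a generator $y=x_j$ with $j\in\beta\setminus X_0$, which exists since $\beta$ is infinite. By Theorem \ref{1} there is $\A\in SA_\alpha$ and a homomorphism $h_0:\Sg\{x_i : i\in X_0\}\to\A$ with $h_0(b)\neq 0$. By freeness, extend $h_0$ in two ways: let $h_1,h_2:\Fr_\beta SA_\alpha\to\A$ agree with $h_0$ on $\{x_i : i\in X_0\}$, take arbitrary values on the remaining generators, but require $h_1(y)=0^\A$ and $h_2(y)=1^\A$. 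Since $b$ depends only on $\{x_i : i\in X_0\}$, both $h_1(b)$ and $h_2(b)$ equal $h_0(b)\neq 0$. Form $h=(h_1,h_2):\Fr_\beta SA_\alpha\to\A\times\A$. Then $h(b\cdot y)=(0,h_0(b))\neq 0$ and $h(b\cdot -y)=(h_0(b),0)\neq 0$, so $b\cdot y\neq 0$ and $b\cdot -y\neq 0$ in $\Fr_\beta SA_\alpha$. Therefore $0<b\cdot y<b$, so $b$ is not an atom.

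Third, the final claim that $SA_\alpha$ is not locally finite is immediate: $\Fr_1 SA_\alpha$ is finitely generated yet infinite by the first part.

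The main obstacle is the product trick in the atomless part: one must ensure that $h_1$ and $h_2$ can both send $b$ to the same non-zero value while disagreeing on $y$. This is enabled by the universal property of the free algebra, which grants us independent freedom in assigning the generator $y$ once a representation of the subalgebra not containing $y$ is fixed. With that in place, the product construction mechanically separates $b\cdot y$ from $b\cdot -y$, and no further delicate representation-theoretic input is needed.
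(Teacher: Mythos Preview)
Your proof is correct and follows essentially the same strategy as the paper: distinguish substitution-translates of the generator via a concrete set algebra for the first part, and for the second part split a given non-zero element by a fresh generator using two homomorphisms that agree on the finite subalgebra containing $b$ but send the fresh generator to $0$ and to $1$ respectively. The paper's version of the atomless argument is marginally simpler in that it uses endomorphisms of the free algebra itself (effectively taking $\A=\Fr_\beta SA_\alpha$ and $h_0$ the identity on $\Sg\{x_i:i\in X_0\}$), so your appeal to representability and the product $\A\times\A$ are unnecessary though harmless.
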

\begin{proof}
For the first part, we consider the case when $\beta=1$. Assume that $b$ is the free generator.  First we show that for any finite transposition 
$\tau$ that is not the identity
$s_{\tau}b\neq b$. Let such a $\tau$ be given. Let $\A=\wp(^{\alpha}U)$, and let $X\in \A$, be such that $s_{\tau}X\neq X.$ Such an $X$ obviously
exists. Assume for contradiction that $s_{\tau}b=b$. Let $\B=\Sg^{\A}\{X\}$. 
Then, by freeness, there exists a surjective homomorphism $f:\Fr_{\beta}SA_{\alpha}\to \B$ such that $f(b)=X$.
Hence 
$$s_{\tau}X=s_{\tau}f(b)=f(s_{\tau}b)=f(b)=X,$$
which is impossible. We have proved our claim. Now consider the following subset of $\Fr_{\beta}SA_{\alpha}$, 
$S=\{s_{[i,j]}b: i,j\in \alpha\}$. Then for $i,j,k, l\in \alpha$, with  $\{i,j\}\neq \{k,l\}$, 
we have $s_{[i,j]}b\neq s_{[k,l]}b$, for else, we would get $s_{[i,j]}s_{[k,l]}b=s_{\sigma}b=b $ and $\sigma\neq Id$.
It follows that $S$ is infinite, and so is $\Fr_{\beta}SA_{\alpha}.$ 
The proof for $\beta>1$ is the same.

For the second part, let $X$ be the infinite  generating set. Let $a\in A$ be non-zero. 
Then there is a finite set $Y\subseteq X$ such that $a\in \Sg^{\A} Y$. Let $y\in X\sim Y$.
Then by freeness, there exist homomorphisms $f:\A\to \B$ and $h:\A\to \B$ such that $f(\mu)=h(\mu) $ for all $\mu\in Y$ while
$f(y)=1$ and $h(y)=0$. Then $f(a)=h(a)=a$. Hence $f(a.y)=h(a.-y)=a\neq 0$ and so $a.y\neq 0$ and
$a.-y\neq 0$.
Thus $a$ cannot be an atom.
\end{proof}
For Pinter's algebras the second part applies equally well. For the first part one takes for distict $i,j,k,l$ 
such that $\{i,j\}\cap \{k,l\}=\emptyset$, a relation $X$ in the full set algebra such that $s_i^jX\neq s_k^lX$, and so the set
$\{s_i^jb: i,j\in \alpha\}$ will be infinite, as well.

\section{Adding Diagonals}

We now show that adding equality to our infinite dimensional modal logic, 
algebraically reflected by adding diagonals, does not affect the positive representability results 
obtained
for $SA_{\alpha}$ so far. Also,  in this context, atomicity does not imply complete representability. 
However, we lose elementarity of the class of square completely representable algebras; which is an interesting twist.
We start by defining the concrete algebras, then we provide the finite schema axiomatization.

\begin{definition}

The class of \emph{Representable Diagonal Set Algebras} is defined to  be
$$RDSA_{\alpha}=\mathbf{SP}\{\langle\mathcal{P}(^{\alpha}U); \cap,\sim,S^i_j,S_{ij}, D_{ij}\rangle_{i\neq j\in n}: U\text{ \emph{is a set}},
\}$$
where $S_j^i$ and $S_{ij}$ are as before and $D_{ij}=\{q\in D: q_i=q_j\}$.
\end{definition}

We show that $RDSA_{\alpha}$ is a variety that can be axiomatized by a finite schema.
Let $L_{\alpha}$ be the language of $SA_{\alpha}$ enriched by constants $\{d_{ij}: i,j\in \alpha\}.$

\begin{definition}Let $\Sigma'^d_{\alpha}$ be the axiomatization in $L_{\alpha}$ obtained
by adding to $\Sigma'_{\alpha}$ the following equations for al $i,j<\alpha$.
\begin{enumerate}
\item $d_{ii}=1$
\item $d_{i,j}=d_{j,i}$
\item $d_{i,k}.d_{k,j}\leq d_{i,j}$
\item $s_{\tau}d_{i,j}=d_{\tau(i), \tau(i)}$, $\tau\in \{[i,j], [i|j]\}$.
\end{enumerate}
\end{definition}
\begin{theorem}\label{infinite}
For any infinite ordinal $\alpha$, we have ${\bf Mod}(\Sigma_{\alpha})=RDSA_{\alpha}$.
\end{theorem}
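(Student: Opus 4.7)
The plan is to lift the argument of Theorem~\ref{r} by grafting on a controlled treatment of the new diagonal constants. The inclusion $RDSA_{\alpha} \subseteq \mathbf{Mod}(\Sigma_{\alpha})$ is routine verification: interpreting $d_{ij}$ as $D_{ij} = \{s \in {}^{\alpha}U : s_i = s_j\}$ in a full set algebra $\wp({}^{\alpha}U)$, each of the four added axioms unwinds to an elementary set identity, and closure of the model class under subalgebras and direct products is automatic.

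For the converse, fix $\A \models \Sigma_{\alpha}$ and a non-zero $a \in A$, and extend $\{a\}$ to a Boolean ultrafilter $F_0$ of $\A$. Axioms 1--3 for diagonals force the relation $k \sim l \iff d_{kl} \in F_0$ on $\alpha$ to be an equivalence relation. Set $U = \alpha/\!\sim$, let $\pi : \alpha \to U$ be the quotient viewed as a sequence in ${}^{\alpha}U$, and take $V = {}^{\alpha}U^{(\pi)}$. For each $\xi \in V$ choose a finite-support lift $\tilde{\xi} \in {}^{\alpha}\alpha$ with $\pi \circ \tilde{\xi} = \xi$, and set $\mathcal{F}_{\xi} = \{z \in A : s_{\tilde{\xi}} z \in F_0\}$ and
\[
h_a(x) \;=\; \{\xi \in V : x \in \mathcal{F}_{\xi}\}.
\]
The Boolean and substitution clauses of $h_a$ follow essentially verbatim from the infinite-dimensional Lemma~\ref{f}. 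The new content is the diagonal clause $h_a(d_{ij}) = \{\xi \in V : \xi(i) = \xi(j)\}$: iterating axiom 4 along the generator decomposition of $\tilde{\xi}$ yields $s_{\tilde{\xi}} d_{ij} = d_{\tilde{\xi}(i)\tilde{\xi}(j)}$, which belongs to $F_0$ precisely when $\tilde{\xi}(i) \sim \tilde{\xi}(j)$, i.e.\ $\xi(i) = \xi(j)$ in $U$.

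The main obstacle, and the reason the canonical weak unit ${}^{\alpha}\alpha^{(Id)}$ used in Theorem~\ref{r} cannot be reused directly, is precisely that one must choose the base so the $\xi$'s respect the ``identifications'' that $F_0$ forces through its diagonals. The quotient base $U = \alpha/\!\sim$ is engineered for this. A secondary technical check, handled by combining axioms 3 and 4 with the semigroup identities already in $\Sigma'_{\alpha}$, is that $\mathcal{F}_{\xi}$ depends only on $\xi$ and not on the chosen lift $\tilde{\xi}$: if $\tilde{\xi}_1, \tilde{\xi}_2$ agree modulo $\sim$ on their common finite support, then multiplying by the relevant product of diagonals $d_{\tilde{\xi}_1(i)\tilde{\xi}_2(i)} \in F_0$ shows that $s_{\tilde{\xi}_1} z$ and $s_{\tilde{\xi}_2} z$ lie in $F_0$ together.

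Having thus separated $a$ from $0$ inside a weak diagonal set algebra $\wp(V)$, I would pass to a square unit by invoking the construction of Theorem~\ref{weak}; the strong sub-base isomorphism built there transports $D_{ij}$ to $D_{ij}$, since its underlying ultrapower preserves the componentwise condition $s_i = s_j$ and hence commutes with the interpretation of the diagonal constants. Ranging over all non-zero $a \in A$ and taking the induced subdirect product gives the embedding $\A \hookrightarrow \prod_{a} \wp({}^{\alpha}U_a)$, whence $\A \in RDSA_{\alpha}$, completing the proof. The same strategy, fed through the diagonal analogue of Lemma~\ref{f}, should also yield the superamalgamation and canonicity statements for $RDSA_{\alpha}$ promised in the surrounding discussion, in direct parallel with Theorem~\ref{1}.
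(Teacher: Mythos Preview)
Your proposal is correct and follows essentially the same route as the paper: both fix an ultrafilter $F$ containing $a$, define the equivalence $\sim$ on $\alpha$ via the diagonals in $F$, pass to the quotient to make the diagonal clause hold (you quotient the base $\alpha$ and then choose finite-support lifts, the paper quotients the weak space ${}^{\alpha}\alpha^{(Id)}$ of sequences directly --- these amount to the same construction), verify well-definedness via the diagonal axioms, and finally invoke Theorem~\ref{weak} to square the weak unit. The only cosmetic difference is that the paper writes out the well-definedness induction on $|\{i : \sigma(i) \neq \tau(i)\}|$ in full detail, whereas you sketch it.
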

\begin{proof}
Let $\A\in\mathbf{Mod}(\Sigma'^d_{\alpha})$
and let $0^\A\neq a\in A$. We construct a homomorphism $h:\A\longrightarrow\wp (^{\alpha}\alpha^{(Id)})$.
such that $h(a)\neq 0$.
Like before, choose an ultrafilter $\mathcal{F}\subset A$ containing $a$. Let $h:\A\longrightarrow \wp(^{\alpha}\alpha^{(Id)})$
be the following function $h(z)=\{\xi\in ^{\alpha}\alpha^{(Id)}:S_{\xi}^\A(z)\in\mathcal{F}\}.$
The function $h$ respects substitutions but it may not respect the newly added diagonal elements.
To ensure that it does we factor out $\alpha$, the base of the set algebra, by a congruence relation.
Define the following equivalence relation $\sim$ on $\alpha$, $i\sim j$ iff $d_{ij}\in F$. Using the axioms for diagonals $\sim$
is an equivalence relation.
Let $V={}^{\alpha}\alpha^{(Id}),$ and  $M=V/\sim$. For $h\in V$ we write
$h=\bar{\tau}$, if $h(i)=\tau(i)/\sim$ for all $i\in n$. Of course $\tau$ may not be unique.
Now define $f(z)=\{\bar{\xi}\in M: S_{\xi}^{\A}(z)\in \mathcal{F}\}$. We first check that $f$ is well defined.
We use extensively the property $(s_{\tau}\circ s_{\sigma})x=s_{\tau\circ \sigma}x$ for all
$\tau,\sigma\in {}^{\alpha}\alpha^{(Id)}$, a property that can be inferred form our axiomatization.
We show that $f$ is well defined, by induction on the cardinality of
$$J=\{i\in \mu: \sigma (i)\neq \tau (i)\}.$$
Of course $J$ is finite. If $J$ is empty, the result is obvious.
Otherwise assume that $k\in J$. We introduce a piece of notation.
For $\eta\in V$ and $k,l<\alpha$, write
$\eta(k\mapsto l)$ for the $\eta'\in V$ that is the same as $\eta$ except
that $\eta'(k)=l.$
Now take any
$$\lambda\in \{\eta\in \alpha: \sigma^{-1}\{\eta\}= \tau^{-1}\{\eta\}=\{\eta\}\}$$
We have  $${ s}_{\sigma}x={ s}_{\sigma k}^{\lambda}{ s}_{\sigma (k\mapsto \lambda)}x.$$
Also we have (b)
$${s}_{\tau k}^{\lambda}({ d}_{\lambda, \sigma k}. {\sf s}_{\sigma} x)
={ d}_{\tau k, \sigma k} { s}_{\sigma} x,$$
and (c)
$${ s}_{\tau k}^{\lambda}({ d}_{\lambda, \sigma k}.{\sf s}_{\sigma(k\mapsto \lambda)}x)$$
$$= { d}_{\tau k,  \sigma k}.{ s}_{\sigma(k\mapsto \tau k)}x.$$

and (d)

$${ d}_{\lambda, \sigma k}.{ s}_{\sigma k}^{\lambda}{ s}_{{\sigma}(k\mapsto \lambda)}x=
{ d}_{\lambda, \sigma k}.{ s}_{{\sigma}(k\mapsto \lambda)}x$$

Then by (b), (a), (d) and (c), we get,

$${ d}_{\tau k, \sigma k}.{ s}_{\sigma} x=
{ s}_{\tau k}^{\lambda}({ d}_{\lambda,\sigma k}.{ s}_{\sigma}x)$$
$$={ s}_{\tau k}^{\lambda}({ d}_{\lambda, \sigma k}.{ s}_{\sigma k}^{\lambda}
{ s}_{{\sigma}(k\mapsto \lambda)}x)$$
$$={s}_{\tau k}^{\lambda}({ d}_{\lambda, \sigma k}.{s}_{{\sigma}(k\mapsto \lambda)}x)$$
$$= { d}_{\tau k,  \sigma k}.{ s}_{\sigma(k\mapsto \tau k)}x.$$
The conclusion follows from the induction hypothesis.

Clearly $f$ respects diagonal elements.

Now using exactly the technique in theorem \ref{weak} 
(one can easily check that the defined  isomorphism respects diagonal elements), we can square the weak unit, obtaining
the desired result.

\end{proof}

All positive representation theorems, \ref{1}, \ref{2}, \ref{weak}, \ref{v2},  proved for the diagonal free case, hold here.
But the negative do not, because our counter examples {\it do not} contain diagonal elements.

\begin{question} Is it the case, that for $\alpha\geq \omega$, $RDSA_{\alpha}$ is conjuagted, hence  completey additive.
\end{question}
If the answer is affirmative, then we would get all positive results formulated for transposition algebras, given in the second subsection 
of the next section (for infinte dimensions).

In the finite dimensional case, 
we could capture {\it square} complete representability by stipulating that all the operations are completey additive.
However, when we have one single diagonal element, this does not suffice. Indeed, using a simple cardinality argument of Hirsch and Hodkinson,
that fits perfectly here, 
we get the  following slightly surprising result:

\begin{theorem}\label{hh} For $\alpha\geq \omega$, the class of square completely representable algebras is not elementary.
In particular, there is an algebra that is completely representable, but not square completely representable.
\end{theorem}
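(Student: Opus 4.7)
The plan is a saturation argument in the style of Hirsch and Hodkinson (see \cite{step}). I would produce a countable atomic $\A\in RDSA_\alpha$ that admits a square complete representation and then pass to a sufficiently saturated elementary extension $\B\succ\A$, showing by a cardinality count that $\B$ cannot have any square complete representation. Because $\B\equiv\A$ this already establishes non-elementarity, and the same $\B$ will also witness the ``in particular'' clause, since by an infinite-dimensional analogue of Theorem \ref{atomic} (requiring only complete additivity and atomicity, both first order and hence inherited by $\B$) the algebra $\B$ remains completely representable on some weak unit.

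First I would build $\A$ as a countable atomic subalgebra of $\wp({}^\alpha\omega)$ containing every diagonal $D_{ij}$ and whose atoms form a partition of the full square $^\alpha\omega$. A natural candidate is generated by the diagonals together with countably many further sets designed so that each atom equals a kernel-equivalence class of finite support in $^\alpha\omega$, in analogy with the hereditarily atomic algebras discussed around Lemma \ref{b}. The identity inclusion $\A\hookrightarrow\wp({}^\alpha\omega)$ is then a square complete representation of $\A$ by construction.

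Second, I would let $\B=\A^\omega/F$ for a non-principal ultrafilter $F$, or more generally any $\omega_1$-saturated elementary extension. By the ultraproduct theorem $\B\equiv\A$, so $\B\in RDSA_\alpha$ by Theorem \ref{infinite}, and atomicity, being first order, transfers to $\B$. Applying $\omega_1$-saturation to the type $\{\At(x)\}\cup\{x\neq a:a\in\At\A\}$, which is finitely consistent because $\At\A$ is infinite, yields atoms of $\B$ outside $\At\A$; iterating this gives $|\At\B|\geq\aleph_1$, whereas $|\At\A|=\aleph_0$.

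Third and critically, I would suppose for contradiction that $f:\B\to\wp({}^\alpha W)$ is a square complete representation. Each $b\in\At\B$ carries a kernel equivalence $E_b=\{(i,j):b\leq d_{ij}\}$ on $\alpha$, the image $f(b)\subseteq{}^\alpha W$ consists of sequences of kernel exactly $E_b$, and $\{f(b):b\in\At\B\}$ partitions $^\alpha W$. The Hirsch--Hodkinson cardinality estimate then pits $|\At\B|$ against the limited room in $^\alpha W$: since $\B\equiv\A$ and $\A$ is countable, only countably many substitution-orbits and kernels are first order available, so the $\aleph_1$ new atoms supplied by saturation must re-use the same kernels, while $|W|$ is bounded in terms of $|\A|$ by a definability count on the skeleton of the representation; this forces uncountably many pairwise disjoint non-empty kernel classes into the same fixed-cardinality $^\alpha W$, a contradiction. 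The main obstacle is precisely this last cardinality bookkeeping: one must pin down, from purely first order data, a bound on $|W|$ strong enough to defeat the explosion of atoms given by saturation. This is the technical heart of the Hirsch--Hodkinson method, but once executed, combining the three steps yields both the non-elementarity of the square completely representable class and the existence of a completely representable algebra that is not square completely representable.
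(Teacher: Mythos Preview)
Your argument has the cardinality obstruction pointing in the wrong direction, and as a result the final ``contradiction'' is not a contradiction at all. You want to trap $\aleph_1$ pairwise disjoint non-empty atom-images inside ${}^\alpha W$, and you claim this is impossible because ``$|W|$ is bounded in terms of $|\A|$ by a definability count.'' But nothing bounds $|W|$: a square complete representation of $\B$ is free to use as large a base as it likes, and in any case $|{}^\alpha W|\geq 2^{|\alpha|}\geq 2^{\aleph_0}$ as soon as $|W|\geq 2$, which is ample room for $\aleph_1$ (or even $2^{\aleph_0}$) disjoint non-empty pieces. Saturation gives you a \emph{lower} bound on $|\At\B|$, whereas to rule out a square complete representation you need an \emph{upper} bound on $|\At\B|$ that falls below what the square forces. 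Your sketch never produces such an upper bound, and the vague ``definability count on the skeleton'' does not supply one.

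The paper's argument runs in the opposite direction and this is essential. One first observes that \emph{any} square complete representation of an algebra satisfying $d_{01}<1$ must already have at least $2^{|\alpha|}$ atoms: pick $s\in f(-d_{01})$, set $x=s_0\neq s_1=y$, and for each $S\subseteq\alpha$ with $0\in S$ form the sequence $a_S$ taking value $x$ on $S$ and $y$ off $S$; distinct $S$'s land in distinct atoms via the diagonals $d_{0i}$. Now start with a large completely representable $\C$ (e.g.\ $\wp({}^\alpha 2)$) and go \emph{down} by L\"owenheim--Skolem to an elementary substructure $\B\preceq\C$ with $|\B|\leq|\alpha|$. Then $\B$ still satisfies $d_{01}<1$ but has at most $|\alpha|<2^{|\alpha|}$ atoms, so $\B$ cannot be square completely representable; yet $\B$ inherits atomicity and complete additivity from $\C$ and hence is completely representable on a weak unit. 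The moral is that square complete representability forces the algebra to be \emph{big}, so the witnessing failure is found among \emph{small} elementary substructures, not large saturated extensions.
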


\begin{proof} \cite{Hirsh}. Let $\C\in SA_{\alpha}$ such that $\C\models d_{01}<1$.  Such algebras exist, for example one can take $\C$ to be 
$\wp(^{\alpha}2).$ Assume that $f: \C\to \wp(^{\alpha}X)$ is a square  complete representation. 
Since $\C\models d_{01}<1$, there is $s\in h(-d_{01})$ so that if $x=s_0$ and $y=s_1$, we have
$x\neq y$. For any $S\subseteq \alpha$ such that $0\in S$, set $a_S$ to be the sequence with 
$ith$ coordinate is $x$, if $i\in S$ and $y$ if $i\in \alpha\sim S$. 
By complete representability every $a_S$ is in $h(1)$ and so in 
$h(\mu)$ for some unique atom $\mu$. 

Let $S, S'\subseteq \alpha$ be destinct and assume each contains $0$.   Then there exists 
$i<\alpha$ such that $i\in S$, and $i\notin S'$. So $a_S\in h(d_{01})$ and 
$a_S'\in h (-d_{01}).$ Therefore atoms corresponding to different $a_S$'s are distinct. 
Hence the number of atoms is equal to the number of subsets of $\alpha$ that contain $0$, so it is at least $^{|\alpha|}2$. 
Now using the downward Lowenheim Skolem Tarski theorem, take an elementary substructure $\B$ of $\C$ with $|\B|\leq |\alpha|.$
Then in $\B$ we have $\B\models d_{01}<1$. But $\B$ has at most $|\alpha|$ atoms, and so $\B$ cannot be {\it square} 
completely representable
(though it is completely representable on a weak unit).
\end{proof}

\section{ Axiomatizating the quasi-varieties}

We start this section by proving a  somewhat general result. 
It is a  non-trival generalisation of S\'agi's result providing an axiomatization for the quasivariety of 
full replacement algebras \cite{sagiphd}, the latter result is obtained by taking $T$ to be the semigroup of all non-bijective maps on $n$.  
 submonoid of $^nn$.
Let $$G=\{\xi\in S_n: \xi\circ \sigma\in T,\text{ for all }\sigma\in T\}.$$

Let $RT_n$ be the class of subdirect products of full set algebras, 
in the similarity type of $T$, and let $\Sigma_n$ be the axiomatization of the variety generated by 
$RT_n$, obtained from any presentation of $T$. (We know that every finite monoid has a representation). 

We now give an axiomatization of the quasivariety $RT_n$, which may not be a variety.

\begin{definition}[The Axiomatization]
For all $n\in\omega,$ $n\geq 2$ the set of quasiequations 
$\Sigma^q_n$ defined to be 
$$\Sigma^q_n=\Sigma_n\cup\{\bigwedge_{\sigma\in T}s_{\sigma}(x_{\sigma})=0\Rightarrow \bigwedge_{\sigma\in T}
s_{i\circ \sigma}(x_{\sigma})=0: i \in  G\}.$$
\end{definition}


\begin{definition}
Let $\A$ be an $RT_n$ like algebra. Let $\xi\in {}^nn$ and let $F$ be an ultrafilter over $\A.$ Then $F_\xi$ denotes the following subset of $A.$
\[
 F_\xi =
  \begin{cases}
  \{t\in A:(\forall \sigma\in T)(\exists a_{\sigma}\in A)S_{\xi\circ\sigma}(a_{\sigma})\in F\\\mbox{ and }
t\geq\bigwedge_{\xi\in G}S_{\eta\circ \sigma} (a_{\sigma}) & \text{ if }\xi\in G\\
   \{a\in A: S^\A_\xi(a)\in F\}      & \text{otherwise }
  \end{cases}
\]
\end{definition}

The proof of the following theorem, is the same as Sagi's corresponding proof for Pinter's algebras \cite{sagiphd}, 
modulo undergoing the obvious replacements; and therefore it will be 
omitted.

\begin{theorem}
\begin{enumarab}
\item Let $\A\in {\bf Mod}(\Sigma^q_n).$ Let $\xi\in {}^nn$ and let $F$ be an ultrafilter over $\A.$ Then, $F_\xi$ is a proper filter over $\A$

\item Let $\A\in {\bf Mod}(\Sigma^q_n)$ and let $F$ be an ultrafilter over $\A.$ Then, $F_{Id}\subseteq F$.

\item $\A\in {\bf Mod}(\Sigma^q_n)$ and let $F$ be an ultrafilter over $\A.$  
For every $\xi\in {}^nn$ let us choose an ultrafilter $F^*_\xi$ containing $F_\xi$ such that $F^*_{Id}=F$. 
Then the following condition holds for this system of ultrafilters: 
$$(\forall \xi\in {}^nn)(\forall \sigma\in T)(\forall a\in A)({S^k_l}^\A(a)\in F^*_\xi\Leftrightarrow a\in F^*_{\xi\circ\sigma})$$
\end{enumarab}
\end{theorem}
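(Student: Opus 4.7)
The plan is to verify the three claims in order, relying on a case split throughout according to whether the index $\xi$ belongs to $G$ or not. Part (1) is where the quasi-equational axioms in $\Sigma^q_n$ genuinely do work that the equational axioms $\Sigma_n$ cannot do; parts (2) and (3) are essentially bookkeeping once (1) is in hand. Throughout, I will use the standard fact (provable already in $\Sigma_n$) that each $s_\xi$ is a Boolean endomorphism, so $s_\xi(0)=0$ and $s_\xi$ commutes with finite meets.

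For part (1), I would split on $\xi\in G$ versus $\xi\notin G$. If $\xi\notin G$, then $F_\xi=\{a:S_\xi^\A(a)\in F\}$ is just the inverse image of the ultrafilter $F$ under the Boolean endomorphism $S_\xi^\A$; since $S_\xi^\A(0)=0\notin F$ and $S_\xi^\A$ preserves finite meets and order, $F_\xi$ is a proper filter automatically. The substantive case is $\xi\in G$. Here one must first check that the set $F_\xi$ is closed under finite meets and upward closed, both of which are straightforward from the definition (given any two elements of $F_\xi$, take the pointwise minima of the two witnessing choice-functions $\sigma\mapsto a_\sigma$, using that meets distribute through $s_{\xi\circ\sigma}$). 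Propriety is the heart of the argument: suppose $0\in F_\xi$, so we would have a family $(a_\sigma)_{\sigma\in T}$ with each $s_{\xi\circ\sigma}(a_\sigma)\in F$, yet $\bigwedge_\sigma s_{\xi\circ\sigma}(a_\sigma)=0$. Then the contrapositive of the quasi-equation in $\Sigma^q_n$ (applied with generator $\xi\in G$ to witness the implication) forces $\bigwedge_\sigma s_\sigma(a_\sigma)\neq 0$, which is inconsistent with finitely many elements of $F$ being brought to $0$; one unwinds this using the finite-support nature of the meet (only finitely many $a_\sigma\neq 1$) to get a contradiction in $F$ itself.

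For part (2), since $Id\in G$ (as $Id\circ\sigma=\sigma\in T$ for all $\sigma\in T$), I apply the $\xi\in G$ clause of the definition with $\xi=Id$: any $t\in F_{Id}$ dominates some finite meet $\bigwedge_\sigma s_\sigma(a_\sigma)$ with each $s_\sigma(a_\sigma)\in F$. Since $F$ is a filter and the meet is effectively finite, the meet itself lies in $F$, and by upward closure $t\in F$. This gives $F_{Id}\subseteq F$.

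For part (3), the argument is the standard verification that the chosen system $(F^*_\xi)_{\xi\in{}^nn}$ interacts correctly with the substitution operations, so that Lemma \ref{lemma} (or its infinite-dimensional analogue \ref{f}) can be applied to produce a representation. Given $\xi\in{}^nn$, $\sigma\in T$, and $a\in A$, I would show $S_\sigma^\A(a)\in F^*_\xi\Rightarrow a\in F^*_{\xi\circ\sigma}$ by checking that $a\in F_{\xi\circ\sigma}$ whenever $S_\sigma^\A(a)\in F_\xi$: in the $\xi\in G$ case one composes the witnessing family for $S_\sigma^\A(a)\in F_\xi$ with $\sigma$ on the right, using the composition law $s_{\xi\circ\sigma\circ\tau}=s_{(\xi\circ\sigma)\circ\tau}$ derivable from $\Sigma_n$, to produce a witnessing family showing $a\in F_{\xi\circ\sigma}$; the converse uses that $F^*_{\xi\circ\sigma}$ is an ultrafilter extending this filter. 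The $\xi\notin G$ case is immediate from the definition of $F_\xi$ and the equation $s_\xi s_\sigma=s_{\xi\circ\sigma}$.

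The main obstacle is the propriety argument in part (1) for $\xi\in G$: this is the one place where the quasi-equational axiomatization $\Sigma^q_n$ is actually used (and where the purely equational $\Sigma_n$ provably does not suffice, since the class is only a quasivariety in general). Everything else reduces to routine manipulation of Boolean endomorphisms and the composition law for substitutions.
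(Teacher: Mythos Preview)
The paper itself omits this proof entirely, deferring to S\'agi's argument for Pinter's algebras; so there is nothing in the paper to compare against directly. Your overall plan---split on $\xi\in G$ versus $\xi\notin G$, handle the latter by preimages under Boolean endomorphisms, and invoke the quasi-equations for the former---is exactly the S\'agi strategy the paper has in mind.

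That said, your propriety argument for $\xi\in G$ in part~(1) is garbled in a way that matters. You write that $0\in F_\xi$ yields a family $(a_\sigma)$ with each $s_{\xi\circ\sigma}(a_\sigma)\in F$ and $\bigwedge_\sigma s_{\xi\circ\sigma}(a_\sigma)=0$, and then claim that the \emph{contrapositive} of the quasi-equation gives $\bigwedge_\sigma s_\sigma(a_\sigma)\neq 0$. But the quasi-equation reads
\[
\bigwedge_{\sigma\in T} s_\sigma(x_\sigma)=0 \ \Longrightarrow\ \bigwedge_{\sigma\in T} s_{\xi\circ\sigma}(x_\sigma)=0,
\]
so its contrapositive has $\bigwedge s_{\xi\circ\sigma}(x_\sigma)\neq 0$ as \emph{hypothesis}, not as a fact you can feed in. Your deduction simply does not go through. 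The underlying problem is that you have misread the definition of $F_\xi$ for $\xi\in G$: the lower-bound clause should be $t\geq\bigwedge_{\sigma\in T} s_\sigma(a_\sigma)$ (without $\xi$), not $t\geq\bigwedge_\sigma s_{\xi\circ\sigma}(a_\sigma)$. (The displayed definition in the paper is unfortunately mangled, but this is S\'agi's definition, and it is the one that makes the quasi-equations do genuine work.) With the correct reading, $0\in F_\xi$ gives $\bigwedge_\sigma s_\sigma(a_\sigma)=0$ together with $s_{\xi\circ\sigma}(a_\sigma)\in F$ for every $\sigma$; one then applies the quasi-equation in the \emph{forward} direction to obtain $\bigwedge_\sigma s_{\xi\circ\sigma}(a_\sigma)=0$, which is a finite meet of elements of $F$ and hence lies in $F$, contradicting properness of $F$. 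Your remark about ``finite support (only finitely many $a_\sigma\neq 1$)'' is also off the mark: $T\subseteq{}^nn$ is already finite, so all the meets in sight are finite by fiat.

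Your outlines of (2) and (3) are essentially right. For (3), note that once $\xi\in G$ and $\sigma\in T$, the composite $\xi\circ\sigma$ lies in $T$, so $F_{\xi\circ\sigma}$ is given by the simple clause and is already an ultrafilter (preimage of $F$ under the Boolean endomorphism $s_{\xi\circ\sigma}$); hence $F^*_{\xi\circ\sigma}=F_{\xi\circ\sigma}$, which is what makes the back-and-forth verification go through without worrying about the gap between $F_\xi$ and its ultrafilter extension $F^*_\xi$.
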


\begin{theorem}
For finite $n$, we have ${\bf Mod}(\Sigma^q_n)=RT_n.$
\end{theorem}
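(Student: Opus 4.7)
The strategy is standard for such representation theorems: verify both inclusions separately, with the hard direction built on the filter machinery of the preceding theorem. For the inclusion $RT_n \subseteq \mathbf{Mod}(\Sigma^q_n)$, I would verify that any full set algebra $\wp({}^nU)$ validates the quasi-equations of $\Sigma^q_n$. Concretely, given $X_\sigma \in \wp({}^nU)$ for each $\sigma \in T$ with $\bigcap_{\sigma\in T} S_\sigma(X_\sigma) = \emptyset$ and any $i \in G$, one observes that the set-theoretic substitution $S_i$ distributes over arbitrary intersections and preserves the empty set; applying it gives $\emptyset = S_i \bigcap_\sigma S_\sigma(X_\sigma) = \bigcap_\sigma S_i S_\sigma(X_\sigma) = \bigcap_\sigma S_{i \circ \sigma}(X_\sigma)$, with each $i \circ \sigma$ lying in $T$ by the definition of $G$, so that every factor now belongs to the signature.

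For the converse, let $\A \in \mathbf{Mod}(\Sigma^q_n)$ and $0 \neq a \in A$. I will construct a homomorphism $h_a : \A \to \wp({}^nn)$ with $h_a(a) \neq 0$; the product map $g : \A \to \prod_{a \neq 0} \wp({}^nn)$ defined by $g(x) = (h_a(x) : a \neq 0)$ is then an injective homomorphism, witnessing $\A \in RT_n$. Choose an ultrafilter $F$ of $\A$ containing $a$ and apply the preceding theorem to obtain a system $(F^*_\xi)_{\xi \in {}^nn}$ of ultrafilters with $F^*_{Id} = F$, each $F^*_\xi$ extending $F_\xi$, and satisfying
$$s^\A_\sigma(x) \in F^*_\xi \iff x \in F^*_{\xi \circ \sigma} \qquad (\xi \in {}^nn,\ \sigma \in T,\ x \in A).$$
Define $h_a(x) = \{\xi \in {}^nn : x \in F^*_\xi\}$. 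Since each $F^*_\xi$ is an ultrafilter, $h_a$ preserves the Boolean operations in the standard pointwise fashion; and for $\sigma \in T$,
$$h_a(s_\sigma(x)) = \{\xi : s_\sigma(x) \in F^*_\xi\} = \{\xi : x \in F^*_{\xi \circ \sigma}\} = \{\xi : \xi \circ \sigma \in h_a(x)\} = S_\sigma(h_a(x)),$$
so $h_a$ is a homomorphism. Because $a \in F = F^*_{Id}$, we have $Id \in h_a(a)$, whence $h_a(a) \neq 0$.

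The only real obstruction in this argument is the properness of $F_\xi$ for every $\xi \in {}^nn$ (together with the coherent extendibility of the family to ultrafilters meeting the substitution equivalence), and this is exactly where the quasi-equations of $\Sigma^q_n$ are invoked: they ensure that the complicated definition of $F_\xi$ in the case $\xi \in G$ does not collapse to the improper filter. This is precisely the content of the preceding theorem, which we invoke as a black box, so the present result reduces to a routine assembly. The conceptual novelty relative to S\'agi's treatment of Pinter's algebras lies entirely in formulating the additional quasi-equations via the normaliser-type set $G$, so that the filter construction goes through for any submonoid $T$ of ${}^nn$ rather than only for the semigroup of non-bijective maps.
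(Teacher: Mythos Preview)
Your proposal is correct and follows essentially the same approach as the paper. For soundness you give a direct computation in full set algebras (the paper simply cites S\'agi), and for completeness you invoke the preceding theorem to obtain the coherent system of ultrafilters $(F^*_\xi)_{\xi\in{}^nn}$ and then define the representation map $h_a(x)=\{\xi\in{}^nn:x\in F^*_\xi\}$, exactly as the paper does with its map $f(x)=\{\tau\in{}^nn:x\in F_\tau\}$. The paper's proof interleaves a short additional passage (re-examining the filter $F_\eta$ in the case $\eta\in G$), but this is at most a partial re-verification of material already packaged in the preceding theorem; the overall structure, the key lemma invoked, and the assembly into a subdirect embedding are identical to yours.
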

\begin{proof} Soundness is immediate \cite{sagiphd}. Now we prove completeness.
Let $\eta\in {}^nn$, and let $a\in A$ be arbitrary. If $\eta=Id$ 
or $\eta\notin G$, then $F_{\eta}$ is the inverse image of $F$ with respect to $s_{\eta}$ and so we are done.
Else $\eta\in G$, and so for all $\sigma\in T$, we have $F_{\eta\circ \sigma}$ is an ultrafilter. 
Let $a_{\sigma}=a$ and $a_f=1$ for $f\in T$ and $f\neq \sigma$.
Now $a_{\tau}\in F$ for all $\tau\in T$. Hence $s_{\eta\circ \tau}(a_{\tau})\in F$, but  
$s_{\eta}a\geq \prod s_{\tau}(a_{\tau})$ 
and we are done. Now with the availabity of $F_{\eta}$ for every $\eta\in {}^nn,$
we can represent our algebra on square units by $f(x)=\{\tau\in {}^nn: x\in F_{\tau}\}$
\end{proof}

\subsection{ Transpositions only}

Consider the case when $T=S_n$ so that we have substitutions corresponding to transpositions. 
This turns out an interesting case with a plathora of positive results, with the sole exception that the class of 
subdirect products of set algebras is {\it not} a 
variety; it is only a quasi-variety. We can proceed exactly like before obtaining a finite equational axiomatization for the variety generated 
by full set algebras by translating 
a presentation of $S_n$. In this case all the operation (corresponding to transpositions) are self-conjugate (because a transposition is the inverse 
of itself) so that our variety, call it $V$, is conjugated, hence completely additive.
Now, the following can be proved exactly like before, undergoing the obvious modifications.
\begin{theorem}
\begin{enumarab}
\item  $V$ is finitely axiomatizable
\item $V$ is locally finite
\item $V$ has the superamalgamation property
\item $V$ is canonical and atom canonical, hence $\At V=\Str V$ is elementary, and finitely axiomatizable.
\item $V$ is closed under canonical extensions and completions
\item $\L_V$ enjoys an omiting types theorem
\item Atomic algebras are completely representable.
\end{enumarab}
\end{theorem}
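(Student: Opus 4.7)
The plan hinges on the paper's own observation that, with $T=S_n$, each generator satisfies $s_{[i,j]}\circ s_{[i,j]}=s_{\mathrm{Id}}=\mathrm{id}$, so every substitution operator is a self-conjugate Boolean endomorphism. This forces $V$ to be conjugated, and therefore completely additive — the key property underwriting almost every item below.

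For (1) I would translate a finite Coxeter-style presentation of $S_n$ in the transposition generators into a finite set of equations, adjoined to the Boolean axioms and the stipulation that each $s_{[i,j]}$ preserves $\cap$ (from which preservation of $-$ follows as in the body), mimicking Definition \ref{ax2} but restricted to the transposition fragment. Representability then follows verbatim from Theorem \ref{variety}: for any nonzero $a$ pick an ultrafilter $F\ni a$ and verify via Lemma \ref{lemma} (applied to $G=S_n$) that $h(x)=\{\sigma\in S_n: s_\sigma x\in F\}$ is a homomorphism into $\wp(S_n)$ separating $a$ from $0$; since $S_n$ consists entirely of bijections, the conditions $(\ast)$ of Lemma \ref{lemma} are automatically available. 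Item (2) is immediate: if $\A$ is generated by a set $X$ of cardinality $m$, then $Y=\{s_\sigma x: x\in X,\ \sigma\in S_n\}$ has size at most $m\cdot n!$ and Boolean-generates $\A$ since the $s_\sigma$'s are endomorphisms, so $|\A|\leq 2^{2^{mn!}}$. Item (3) reuses the interpolation-in-free-algebras proof of Theorem \ref{super} with no change beyond relabeling the generator set.

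For (4) and (5) the axiomatization consists of positive equations whose non-Boolean symbols are normal completely additive operators, hence it is Sahlqvist; by the standard Sahlqvist machinery $V$ is canonical, $\Str V$ is elementary and finitely axiomatizable, and because the axioms are positive in the extra operators $V$ is atom-canonical and closed under minimal completions, following the completion argument already cited in the body. Item (6) uses only complete additivity: the $covK$ Baire-category proof runs verbatim, and in fact improves because $\bigcup_{\sigma\in S_n}\bigcap_{x\in X}N_{s_\sigma x}$ is a finite union of nowhere dense sets. Item (7) is likewise immediate from additivity together with the method of Theorem \ref{atomic}: since $|S_n|<\omega$, for any atomic $\A\in V$ and any nonzero $a$ a principal ultrafilter generated by an atom below $a$ avoids the finite union $\bigcup_{\sigma\in S_n}(S{\sim}\bigcup_{x\in\At\A}N_{s_\sigma x})$, and the map $h_a(x)=\{\sigma\in S_n: s_\sigma x\in F\}$ yields a complete representation on $^nn$.

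The only point requiring care is in (4)--(5): one must verify that every equation in the chosen presentation is in fact of Sahlqvist shape and that canonical extensions and minimal completions validate them. Because all operators are completely additive self-conjugate Boolean endomorphisms, and the relations among them come from a group presentation (so take the form $s_\sigma s_\tau x=s_{\sigma\tau}x$), this verification is routine but must be carried out schema by schema. Beyond this bookkeeping I anticipate no serious obstacle, since the hard work was already done in the diagonal-free transposition case and the only simplification here is the disappearance of the replacement-induced additivity failures that caused the negative results for $SA_n$.
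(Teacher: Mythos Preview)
Your proposal is correct and matches the paper's own approach, which is simply the one-line remark ``the following can be proved exactly like before, undergoing the obvious modifications'': you have spelled out exactly what those modifications are, item by item, and each step transfers as you indicate.

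The one genuine difference worth flagging concerns item (3). You invoke the interpolation-in-free-algebras argument of Theorem~\ref{super}, and the paper agrees this works. But the paper then supplies an \emph{alternative} proof of superamalgamation via modal duality: it observes that the axioms are positive (hence their first-order frame correspondents are Horn, hence clausifiable), so $\Str V$ is closed under finite zigzag products, and then appeals to Marx's theorem that canonicity together with closure of $\Str V$ under zigzag products yields SUPAP. This second route buys generality (it works uniformly for any submonoid of ${}^nn$ without redoing the ultrafilter bookkeeping) and connects the result to the frame-theoretic machinery already used for items (4)--(5); your route is more self-contained and reuses machinery already established in the paper. Both are valid.

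Two cosmetic points: in (7) your representation lands in $\wp(S_n)$, not $\wp({}^nn)$, since only permutations $\sigma$ index the operators $s_\sigma$; and in (1) the target $\wp(S_n)$ lies in $V$ because it is a relativization (hence homomorphic image, via Theorem~\ref{relativization}) of the full set algebra $\wp({}^nn)$, so the representation does place $\A$ inside $V$ as required. Neither affects correctness.
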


Here we give a different proof (inspired by the duality theory of modal logic) that $V$ has the the superamalgamation property. 
The proof works verbatim for any submonoid of $^nn$, and indeed, so does the other implemented for $SA_{\alpha}$. In fact the two proofs work for any submonoid of 
$^nn$.

Recall that a frame of type $TA_n$ is a first order structure $\F=(V,  S_{ij})_{i,j\in \alpha}$ where $V$ is an arbitrary set and
and  $S_{ij}$ is a binary relation on $V$  for all $i, j\in \alpha$.

Given a frame $\F$, its complex algebra will be denotet by $\F^+$; $\F^+$ is the algebra $(\wp(\F), s_{ij})_{i,j}$  where for $X\subseteq  V$,
$s_{ij}(X)=\{s\in V: \exists t\in X, (t, s)\in S_{i,j} \}$.
For $K\subseteq TA_n,$ we let $\Str K=\{\F: \F^+\in K\}.$ 

For a variety $V$, it is always the case that 
$\Str V\subseteq \At V$ and equality holds if the variety is atom-canonical. 
If $V$ is canonical, then $\Str V$ generates $V$ in the strong sense, that is 
$V= {\bf S}\Cm \Str V$. For Sahlqvist varieties, as is our case, $\Str V$ is elementary.

\begin{definition}
\item Given a family $(\F_i)_{i\in I}$ of frames, a {\it zigzag  product} of these frames is a substructure of $\prod_{i\in I}\F_i$ such that the
projection maps restricted to $S$ are
onto.
\end{definition}

\begin{definition} Let $\F, \G, \H$ be frames, and $f:\G\to \F$ and $h:\F\to \H$. 
Then $INSEP=\{(x,y)\in \G\times \H: f(x)=h(y)\}$. 
\end{definition}

\begin{lemma} The frame $INSEP \upharpoonright G\times H$ is a zigzag product
of $G$ and $H$, such that $\pi\circ \pi_0=h\circ \pi_1$, where $\pi_0$ and $\pi_1$ are the projection maps.
\end{lemma}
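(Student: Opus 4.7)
The plan is to unpack the three things ``zigzag product satisfying $f\circ \pi_0 = h\circ \pi_1$'' really demands of the set $I := INSEP \restriction (G\times H) = \{(x,y)\in G\times H : f(x) = h(y)\}$: first, that $I$ is closed under the relations $S_{ij}^{G\times H}$ of the product frame, so that it is genuinely a substructure; second, that the two coordinate projections $\pi_0 : I \to G$ and $\pi_1 : I \to H$ are surjective; and third, that the diagram commutes, i.e.\ $f(\pi_0(x,y)) = h(\pi_1(x,y))$ for each $(x,y)\in I$. The third point is literally the defining condition of $I$, so there is nothing to prove there.

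For closure under $S_{ij}^{G\times H}$, I would exploit the very special form of the accessibility relations in a $TA_n$-frame: $S_{ij}$ is (the graph of) the bijection $s\mapsto s\circ[i,j]$, hence functional. In the product frame we therefore have, for each $(x_1,y_1)$, a unique $S_{ij}^{G\times H}$-successor $(x_2,y_2) = (S_{ij}^G(x_1), S_{ij}^H(y_1))$. Assuming $(x_1,y_1)\in I$, so $f(x_1)=h(y_1)$, the fact that $f$ and $h$ are bounded morphisms (they commute with the functional $S_{ij}$) yields
\[
f(x_2) = f(S_{ij}^G(x_1)) = S_{ij}^F(f(x_1)) = S_{ij}^F(h(y_1)) = h(S_{ij}^H(y_1)) = h(y_2),
\]
so $(x_2,y_2)\in I$. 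Thus $I$ is closed under every $S_{ij}^{G\times H}$.

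Finally, for the surjectivity of the projections I would invoke the standing assumption (implicit in the amalgamation setup) that $f$ and $h$ are surjective bounded morphisms; this is the dual of the two relevant $SA_n$-embeddings in the interpolation argument. Given $x\in G$, surjectivity of $h$ produces some $y\in H$ with $h(y)=f(x)$, whence $(x,y)\in I$ and $\pi_0(x,y)=x$; symmetrically for $\pi_1$. The main obstacle, such as it is, is the closure step, because it is the only one that uses anything about the frames rather than pure set theory; but the obstacle is mild since the functional character of the transposition-relations makes the computation automatic. If one wanted the lemma for arbitrary submonoids of ${}^nn$, exactly the same argument works, because all the generating substitutions correspond to total functions on the state-space, and bounded morphisms between such frames must commute with these functions.
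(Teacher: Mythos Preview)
The paper does not actually give a proof of this lemma: it simply cites \cite{Marx}, Lemma~5.2.4. So there is no ``paper's own proof'' to compare against --- only a reference. Your argument is a legitimate, self-contained verification of the cited result specialised to the present setting.

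Your proof is correct. A couple of remarks are in order. First, your closure step uses that the accessibility relations $S_{ij}$ are \emph{functional}; this is true here because transpositions are self-inverse and the corresponding substitution operators are Boolean automorphisms, so on the dual (ultrafilter) frames the relations are bijections. In Marx's general treatment the argument is phrased for bounded morphisms between arbitrary frames and does not need functionality to get a substructure in the relational sense (any subset with the restricted relations is a substructure); functionality is what you use to get the stronger conclusion that $I$ is a \emph{generated} subframe, which is indeed what one wants so that the projections are bounded morphisms with the back property. Second, you correctly flag that surjectivity of $f$ and $h$ is being used for the projection-surjectivity clause; in the paper's application (the theorem immediately following), the maps are $f_+$ and $h_+$, duals of embeddings, hence surjective, so your assumption is warranted. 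Finally, the ``$\pi\circ\pi_0$'' in the displayed statement is evidently a typo for ``$f\circ\pi_0$'', as you implicitly corrected.
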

\begin{proof} \cite{Marx} 5.2.4
\end{proof}
For $h:\A\to \B$, $h_+$ denotes the function from $\Uf\B\to \Uf\A$ defined by $h_+(u)=h^{-1}[u]$ 
where the latter is $\{x\in a: h(x)\in u\}.$
For an algebra $\C$, Marx denotes $\C$ by $\C_+,$ and proves:

\begin{theorem}(\cite{Marx} lemma 5.2.6)
Assume that $K$ is a canonical variety and $\Str K$ is closed under finite zigzag products. Then $K$ has the superamalgamation
property.
\end{theorem}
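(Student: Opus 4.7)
The plan is to exploit the duality between BAOs and their ultrafilter frames: an embedding of BAOs corresponds to a surjective bounded morphism of frames in the opposite direction, and the zigzag product of frames is precisely dual to amalgamation of complex algebras. So I would translate a V-formation in $K$ to one of frames in $\Str K$, zigzag there, and then dualize back.

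Concretely, given $f:\A\hookrightarrow \B$ and $g:\A\hookrightarrow \C$ in $K$, I would first pass to the canonical extensions $\A^+,\B^+,\C^+$, which lie in $K$ by canonicity and admit complete monomorphisms $f^+, g^+$ extending $f, g$. Since $\A^+=(\Uf\A)^+$ and similarly for $\B,\C$, the ultrafilter frames all lie in $\Str K$, and $f^+, g^+$ dualize to surjective bounded morphisms $f_+:\Uf\B\twoheadrightarrow \Uf\A$ and $g_+:\Uf\C\twoheadrightarrow \Uf\A$. Next I would form the zigzag product $\F=\{(x,y)\in \Uf\B\times \Uf\C : f_+(x)=g_+(y)\}$ with inherited accessibility relations; by the lemma stated just above, $\F$ is a zigzag product with commuting projections $\pi_\B,\pi_\C$. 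The hypothesis gives $\F\in \Str K$, hence $\D:=\F^+\in K$. The two projections are surjective bounded morphisms, so they dualize to complete embeddings $\B^+\hookrightarrow \D$ and $\C^+\hookrightarrow \D$ that agree on the image of $\A^+$. Restricting along $\B\hookrightarrow \B^+$ and $\C\hookrightarrow \C^+$ yields the amalgamation inside $K$.

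For the superamalgamation clause, suppose $\hat f(b)\leq \hat g(c)$ in $\D$ with $b\in \B$, $c\in\C$. Via duality this reads: for every $(x,y)\in \F$, if $b\in x$ then $c\in y$. I would then set $U:=\{u\in \Uf\A : \exists x\in\Uf\B,\; f_+(x)=u \text{ and } b\in x\}\in \A^+$ and verify $b\leq f^+(U)$ and $g^+(U)\leq c$, which already yields superamalgamation at the canonical-extension level. To descend to an $a\in \A$ with $b\leq f(a)$ and $g(a)\leq c$, I would invoke the compactness/denseness properties of canonical extensions: $b$ and $c$ are clopen (hence compact) in $\B^+$ and $\C^+$, and $\A$ is dense in $\A^+$, so the inequalities reduce to finite approximations from which a single element of $\A$ is extracted.

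The main obstacle is precisely this last descent step. Obtaining an amalgam in $K$ from a zigzag of frames is essentially formal once duality and canonicity are in hand; but upgrading ordinary amalgamation to \emph{super}-amalgamation, by extracting a genuine element of $\A$ rather than one of $\A^+$, requires careful use of the topological structure of the canonical extension, in particular the compactness of the clopens coming from $\B$ and $\C$. This is also the step where the zigzag-closure hypothesis is used in its sharpest form, producing an element $U\in\A^+$ of exactly the right shape to be dominated by an element of the original algebra $\A$.
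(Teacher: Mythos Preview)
Your proposal is correct and follows essentially the same route as the paper's sketch (which in turn defers to Marx): dualize the embeddings to surjective bounded morphisms on ultrafilter frames, form the $INSEP$ zigzag product, and take its complex algebra as the amalgam. The paper stops at ``Then $\F^+$ is a superamalgam,'' whereas you correctly isolate the descent from an interpolant $U\in\A^+$ to one in $\A$ as the substantive step and indicate the right mechanism (compactness of clopens in the canonical extension, equivalently a filter--ideal separation argument in $\A$); this is exactly what Marx's proof does, so your outline matches it in both structure and emphasis.
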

\begin{demo}{Sketch of proof} Let $\A, \B, \C\in K$ and $f:\A\to \B$ and $h:\A\to \C$ be given monomorphisms. 
Then $f_+:\B_+\to \A_+$ and $h_+:\C_+\to \A_+$. We have $INSEP=\{(x,y): f_+(x)=h_+(y)\}$ is a zigzag connection. Let $\F$ be the zigzag product 
of $INSEP\upharpoonright \A_+\times \B_+$. 
Then $\F^+$ is a superamalgam.
\end{demo}
\begin{theorem}
The variety $TA_n$ has $SUPAP$.
\end{theorem}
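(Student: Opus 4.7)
The plan is to apply the theorem stated immediately above, due to Marx, which reduces $SUPAP$ for a canonical variety $K$ to closure of $\Str K$ under finite zigzag products. Accordingly, two things must be verified for $TA_n$: (i) canonicity, and (ii) closure of $\Str TA_n$ under finite zigzag products.

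\textbf{Canonicity.} The equational axiomatisation of $TA_n$ consists of the Boolean axioms, axioms expressing that each $s_{ij}$ is additive (and hence a Boolean endomorphism), and the finitely many equations obtained from a presentation of $S_n$ by its transposition generators. All these are plainly Sahlqvist equations, so $TA_n$ is canonical; this is essentially the analogue for $TA_n$ of the Sahlqvist observation already recorded earlier in the paper for $SA_{\alpha}$.

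\textbf{Closure under zigzag products.} Unwinding the definitions, a frame $\F=(V,S_{ij})_{i,j<n}$ lies in $\Str TA_n$ exactly when each $S_{ij}$ is the graph of an involutive bijection $\sigma_{ij}:V\to V$, and the family $(\sigma_{ij})_{i,j<n}$ satisfies the defining relations of $S_n$ from the chosen presentation. Given frames $\F_1,\dots,\F_k\in\Str TA_n$ and a zigzag product $S\subseteq \prod_{l} \F_l$ with surjective projection maps, the task is to show that each coordinatewise bijection $\sigma_{ij}=\prod_l \sigma_{ij}^{\F_l}$ restricts to a bijection of $S$. Once this stability is secured, the involutive identity and the compositional relations of $S_n$ transfer from the factors to $S$ by a routine coordinate-by-coordinate verification, placing $S$ in $\Str TA_n$.

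\textbf{Main obstacle.} The delicate point is the stability step, i.e.\ that $S$ is closed under each $\sigma_{ij}$. For the $\mathrm{INSEP}$ zigzag product used in the proof of the preceding theorem, closure is automatic: if $(x,y)\in\mathrm{INSEP}$, meaning $f_+(x)=h_+(y)$, then because $f_+$ and $h_+$ are bounded morphisms and so commute with every accessibility relation, one gets $f_+(\sigma_{ij}x)=\sigma_{ij}f_+(x)=\sigma_{ij}h_+(y)=h_+(\sigma_{ij}y)$, whence $(\sigma_{ij}x,\sigma_{ij}y)\in\mathrm{INSEP}$. More generally, total functionality of $S_{ij}$ in each $\F_l$ means the only possible $S_{ij}$-successor of a tuple $(x_l)\in S$ inside the product frame is $(\sigma_{ij}^{\F_l}(x_l))$, and the substructure condition combined with surjectivity of the projections forces this tuple to lie in $S$. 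With (i) and (ii) in hand, the preceding theorem immediately delivers $SUPAP$ for $TA_n$.
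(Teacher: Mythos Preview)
Your overall strategy coincides with the paper's: establish canonicity, establish closure of $\Str TA_n$ under finite zigzag products, and invoke Marx's theorem. Canonicity via Sahlqvist/positive equations is fine and matches the paper.

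Where you diverge is in verifying zigzag closure. The paper does not argue directly with frames at all: it observes that since $TA_n$ is axiomatised by \emph{positive} equations, their first-order frame correspondents are Horn sentences, hence \emph{clausifiable} in Marx's sense, and then cites Marx's Theorem~5.3.5, which says that clausifiability already implies closure of $\Str K$ under finite zigzag products. This is a one-line appeal to existing machinery.

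Your hands-on verification, by contrast, has a real gap in the general case. The sentence ``the substructure condition combined with surjectivity of the projections forces this tuple to lie in $S$'' is not correct as written. Take $\F=\{a,b\}$ with the involution $\sigma$ swapping $a$ and $b$, and let $S=\{(a,a),(b,b),(a,b)\}\subseteq\F\times\F$ with the restricted relation. Both projections are surjective and $S$ is a substructure, yet $\sigma(a,b)=(b,a)\notin S$. What actually forces closure is that in Marx's definition the projections are required to be \emph{bounded} (zigzag) morphisms, not merely surjective maps; the back condition, together with functionality of $S_{ij}$ in the product, is what pins down $(\sigma_{ij}(x_l))\in S$. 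Your INSEP paragraph is correct precisely because there you do exploit that $f_+,h_+$ commute with the accessibility relations --- that is the bounded-morphism property --- but you then drop this ingredient in the general case. To repair the argument, either invoke the back condition explicitly for arbitrary zigzag products, or follow the paper and pass through the Horn/clausifiable route.
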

\begin{proof}  Since $TA_n$ can be easily  defined by positive equations then it is canonical.
The first order correspondents of the positive equations translated to the class of frames will be Horn formulas, hence clausifiable \cite{Marx} theorem 
5.3.5, and so $\Str K$ is closed under finite zigzag products. Marx's theorem finishes the proof.
\end{proof}

The following example is joint with Mohammed Assem (personnel communication).

\begin{theorem}\label{not} For $n\geq 2$, $RTA_n$ is not a variety.
\end{theorem}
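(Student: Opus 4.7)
The plan is a reductio: assume $RTA_n$ is a variety, hence closed under both homomorphic images and subalgebras, and exhibit a four-element algebra $B$ that lies in $HS(\wp({}^nU))$ (so in any variety containing $RTA_n$) but is provably not in $RTA_n$. The abstract algebra $B$ I will use is the four-element Boolean algebra $\{0,a,-a,1\}$ equipped with every transposition acting as the single involution swapping $a$ and $-a$. This assignment is consistent with the $S_n$-presentation relations, since it is the algebraic incarnation of the sign homomorphism $\mathrm{sgn}\colon S_n\to\{\pm 1\}$: odd compositions act as $\sigma$ and even ones as the identity, and each required relation reduces to $\sigma^2=\mathrm{id}$.

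To realize $B$ from a full set algebra by the operations $H$ and $S$, take $U=\{0,1,\dots,n-1\}$ and let $A\subseteq{}^nU$ be the set of bijections $U\to U$ viewed as sequences. Since any permutation of coordinates sends a bijection to a bijection, $A$ is permutable, and the restriction map $X\mapsto X\cap A$ is a surjective transposition-algebra homomorphism from $\wp({}^nU)$ onto the relativized set algebra $\wp(A)$. Splitting $A=A_+\sqcup A_-$ into even and odd permutations, the four elements $\emptyset,A_+,A_-,A$ form a subalgebra of $\wp(A)$: Boolean closure holds because $-A_+=A_-$, and every transposition, being odd, gives $S_{[i,j]}A_+=A_-$ and symmetrically $S_{[i,j]}A_-=A_+$. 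This subalgebra is isomorphic to the abstract $B$, hence $B\in HS(RTA_n)$.

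The crux of the proof is verifying $B\notin RTA_n$. Assume an injective transposition-algebra homomorphism $h\colon B\hookrightarrow\prod_{i\in I}\wp({}^nW_i)$ exists, and pick $i$ with $X:=h_i(A_+)\neq\emptyset$. Then $h_i(A_-)=-X$ by Boolean preservation, while preservation of substitutions combined with $s_{[0,1]}A_+=A_-$ in $B$ yields $S_{[0,1]}X=-X$ inside $\wp({}^nW_i)$. But for any sequence $s\in{}^nW_i$ with $s_0=s_1$ one has $s\circ[0,1]=s$, whence $s\in S_{[0,1]}X$ iff $s\in X$; combined with $S_{[0,1]}X=-X$ this forces the absurdity $s\in X$ iff $s\notin X$. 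So ${}^nW_i$ must contain no diagonal sequence, which requires $W_i=\emptyset$ and therefore $X=\emptyset$, contradicting the choice of $i$.

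Assembling the pieces, if $RTA_n$ were a variety we would have $B\in HS(RTA_n)\subseteq RTA_n$, clashing with the nonrepresentability just proved, so $RTA_n$ cannot be a variety. The main obstacle is identifying the right algebra $B$ and exhibiting a concrete realization of it by applying $H$ and $S$ to a full set algebra; the diagonal-sequence calculation at the end is short but decisive, hinging on the one inescapable feature of every full set algebra over a nonempty base, namely that each transposition fixes the diagonal $D_{ij}$.
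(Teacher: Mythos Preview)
Your proof is correct and follows the same underlying strategy as the paper: exhibit an algebra obtainable from a full set algebra by $\mathbf{H}$ and $\mathbf{S}$ in which some substitution satisfies $s_f(x)=-x$ nontrivially, and then rule this out in any member of $RTA_n$ using sequences fixed by the permutation in question. The paper phrases the obstruction as a quasi-equation $s_f(x)=-x\to 0=1$, verifies it on full set algebras via \emph{constant} sequences (fixed by every permutation), and for the witnessing algebra takes the full relativization $\wp(G)$ with $G=\{s\in{}^n2:|\{i:s(i)=0\}|=1\}$, which forces them to use a dimension-dependent product of disjoint transpositions as $f$. Your construction is more uniform: by passing to the four-element ``sign'' subalgebra $\{\emptyset,A_+,A_-,A\}$ of $\wp(S_n)$ you can always use the single transposition $[0,1]$, and the obstruction argument needs only the $D_{01}$-diagonal rather than constant maps. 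One small remark: the order of operators in your write-up is $\mathbf{S}\mathbf{H}$ (relativize first, then take the subalgebra) rather than $\mathbf{H}\mathbf{S}$, but since a variety is closed under both this is immaterial to the argument.
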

\begin{proof}
Let us denote by $\sigma$ the quasi-equation
$$s_f(x)=-x\longrightarrow 0=1,$$ where $f$ is any permutation.
We claim that for all $k\leq n,$ $\sigma$ holds in the small algebra $\A_{nk}$ 
(or more generally, any set algebra with square unit).
This can be seen using a constant map in $^nk.$ More precisely, let $q\in  {}^nk$
be an arbitrary constant map and let $X$ be any subset of $^nk.$
We have two cases for $q$ which are $q\in X$ or $q\in -X$. In either case,
noticing that $q\in X\Leftrightarrow q\in S_f(X),$ it cannot be the case that $S_f(X)=-X.$
Thus, the implication $\sigma$ holds in $\A_{nk}.$
It follows then,
that $RTA_n\models\sigma$ (because the operators $\mathbf{S}$ and $\mathbf{P}$ preserve quasi-equations).

Now we are going to show that there is some element $\B\in PTA_n$, and a specific permutation $f$, such that $\B\nvDash\sigma.$
Let $G\subseteq {}^nn$ be the following permutable set $$G=\{s\in {}^n2:|\{i:s(i)=0\}|=1\}.$$
Let $\B=\wp(G)$, then $\wp(G)\in PTA_n.$ Let $f$ be the permutation defined as follows
For $n=2,3,$ $f$ is simply the transposition $[0,1]$. For larger $n$:
\[
 f =
  \begin{cases}
   [0,1]\circ[2,3]\circ\ldots\circ[n-2,n-1] & \text{if $n$ is even}, \\
   [0,1]\circ[2,3]\circ\ldots\circ[n-3,n-2]      & \text{if $n$ is odd}
  \end{cases}
\]
Notice that $f$ is the composition of disjoint transpositions.
Let $X$ be the following subset of $G,$ $$X=\{e_i:i\mbox{ is odd, }i<n\},$$
where $e_i$ denotes the map that maps every element to $1$ except that the $i$th element is mapped to $0$.
It is easy to see that, for all odd $i<n,$ $e_i\circ f=e_{i-1}.$
This clearly implies that $$S_f^\B(X)=-X=\{e_i:i\mbox{ is even, }i<n\}.$$
Since $0^\B\neq 1^\B,$ $X$ falsifies $\sigma$ in $\B.$ Since $\B\in {\bf H}\{{\wp(^nn)}\}$ we are done.
\end{proof}
Let $Sir(K)$ denote he class of subdirectly indecomposable algebras in $K$.
\begin{corollary} The variety $PTA_n$ is not a discriminator variety
\end{corollary}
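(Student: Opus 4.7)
The plan is to exploit the standard rigidity of discriminator varieties: in any such variety every quasi-equation is equivalent to an equation, so every sub-quasi-variety is automatically a sub-variety. Combined with Theorem~\ref{not}, which exhibits a sub-quasi-variety of $PTA_n$ that is not closed under $\mathbf{H}$, this will force $PTA_n$ not to be a discriminator variety.

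For the translation I would use the Boolean reduct. In a discriminator variety $V$ with ternary discriminator term $d(x,y,z)$ and Boolean zero $0$, put $u(x,y) := d(x,y,0)$; on any simple member $S$ of $V$ one has $u(x,y)^S = 0 \Leftrightarrow x^S = y^S$, and semisimplicity of $V$ (every algebra is a subdirect product of simples) propagates this equivalence to all of $V$. Hence the quasi-equation $\bigwedge_i p_i = q_i \rightarrow s = t$ is equivalent over $V$ to the single Boolean equation $u(s,t) \leq \bigvee_i u(p_i,q_i)$, so every sub-quasi-variety of $V$ is defined (within $V$) by equations and is therefore a sub-variety.

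Now assume for contradiction that $PTA_n$ is a discriminator variety. Then $RTA_n = \mathbf{SP}\{\wp({}^nU): U \text{ a set}\}$, being a sub-quasi-variety of $PTA_n$ containing its generating class, would be a sub-variety and hence equal $PTA_n$. But Theorem~\ref{not} exhibits $\B \in \mathbf{H}\{\wp({}^nn)\} \subseteq PTA_n$ falsifying the quasi-equation $\sigma$ which holds throughout $RTA_n$; so $\B \in PTA_n \setminus RTA_n$, contradicting $RTA_n = PTA_n$. The only nontrivial step is the translation lemma, which is a standard consequence of the discriminator structure; everything else is a routine invocation of Theorem~\ref{not}.
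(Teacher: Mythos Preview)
Your approach matches the paper's: both deduce from the discriminator hypothesis that $RTA_n$ would be a (sub\nobreakdash-)variety of $PTA_n$, hence equal to it, contradicting Theorem~\ref{not}. The paper's one-line proof simply cites the standard fact that a discriminator term on the subdirectly irreducibles forces $RTA_n$ to be a variety; your route through the quasi-equation/equation translation lemma is the explicit version of the same idea.

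One small slip to fix: your term $u(x,y):=d(x,y,0)$ does not satisfy $u(x,y)=0\Leftrightarrow x=y$ on simples, since for $x=0\neq y$ the discriminator returns the first argument, giving $d(0,y,0)=0$. Use instead $u(x,y):=d(x,y,0)+d(y,x,0)$, or bypass $u$ altogether via the standard direct encoding (the quasi-equation $p=q\to s=t$ is equivalent over any discriminator variety to the equation $d(p,q,s)=d(p,q,t)$, and this iterates over finitely many premises). With that correction the argument goes through.
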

\begin{proof} If it were then, there would be a discriminator term on $Sir(RTA_n)$ forcing $RTA_n$
to be  variety, which is not the case.
\end{proof}
All of the above positive results extend to the infinite dimensional case, by using units of the form 
$V=\{t\in {}^{\alpha}\alpha^{(Id)}: t\upharpoonright {\sf sup} t\text { is a bijection }\},$ where ${\sf sup} t=\{i\in \alpha: s_i\neq i\}$, and defining
$s_{\tau}$ for $\tau\in V$ the obvious way. This will be well defined, because the schema axiomatizating 
the variety generated by the square set algebras, is given by lifting the finite axiomatization for $n\geq 5$ (resulting from a presentation of $S_n$, 
to allow indices ranging over $\alpha$. Also, $RTA_{\alpha}$ will {\it not} be a variety using the previous example 
together with the same lifting argument implemented for Pinter's algebras, and finally $TA_{\alpha}$ is not locally finite. 

\section{Decidability}

The decidability of the studied $n$ dimensional multi modal logic, can be proved easily by filtration 
(since the corresponding varieties are locally finite, so such logics are finitely based), or can 
inferred from the decidability of the word problem for finite semigroups. But this is much more than needed.

In fact we shall prove a much stronger result, concerning $NP$ completeness.
The $NP$ completeness of our multi dimensional modal logics (for all three cases, Pinters algebras, transposition algebras, 
 and substitution algebras), by the so called {\it selection method}, which gives a (polynomial) 
bound on a model satifying a given formula in terms of its length. 
This follows from the simple observation that the accessibility relations, are not only partial functions, but they are actually 
total functions, so the method of selection works.  
This for example is {\it not} the case for accessibility relations corresponding to cylindrifiers, and indeed cylindric modal logic of dimension $>2$,
is highly {\it undecidable}, a result of Maddux.

We should also mention that the equational theory of the variety and quasi-varieties (in case of non-closure under homomorphic images)
are also decidable. This is proved exactly like in \cite{sagiphd}, so we 
omit the proof. (Basically, the idea is to reduce the problem to decidability in the finite dimensional using finite reducts).
Our proof of $NP$ completeness is fairly standard. We prepare with some well-known definitions \cite{modal}.
\begin{definition}
Let $\L$ be a normal modal logic, $\M$ a family of finitely based models (based on a $\tau$-frame of finite character). $\L$ \emph{has the polysize model property} with respect to $\M$ if there is a polynomial function $f$ such that  any consistent formula $\phi$ is satisfiable in a model in $\M$ containing at most $f(|\phi|)$ states.
\end{definition}
\begin{theorem}
Let $\tau$ be finite similarity type. Let $\L$ be a consistent normal modal logic over $\tau$ with the polysize 
model property with respect to some class of models $\M$. 
If the problem of deciding whether $M\in\M$ is computable in time polynomial in $|M|$, then $\L$ has an NP-complete satisfiability problem. 
\end{theorem}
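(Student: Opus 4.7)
The plan is to establish both membership in NP and NP-hardness. For membership, I will exploit the polysize model property together with the polynomial-time recognizability of $\M$ to describe a nondeterministic polynomial-time algorithm that decides satisfiability. For hardness, I will reduce Boolean SAT to satisfiability in $\L$ using the fact that $\L$, being a consistent normal modal logic, contains classical propositional logic as a fragment.

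For the NP upper bound, let $f$ be the polynomial witnessing the polysize model property, and let $\phi$ be an input formula of length $n=|\phi|$. If $\phi$ is $\L$-satisfiable then by hypothesis there is a model $M\in\M$ with at most $f(n)$ states and a state $w$ in $M$ such that $M,w\models \phi$. The nondeterministic algorithm guesses such a model $M$ (which can be encoded in size polynomial in $f(n)$, using that $\tau$ is finite so each accessibility relation and each valuation of propositional variables occurring in $\phi$ can be recorded in polynomial space), guesses a state $w$, then verifies (i) that $M\in\M$, which runs in time polynomial in $|M|$ by assumption, and (ii) that $M,w\models\phi$, which is standard modal model-checking and runs in time polynomial in $|M|\cdot|\phi|$ by a bottom-up labelling of subformulas of $\phi$ at each state. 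All steps are polynomial in $n$, so satisfiability is in NP.

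For NP-hardness, I will reduce propositional SAT to $\L$-satisfiability. Because $\L$ is a consistent normal modal logic, it extends classical propositional logic: any Boolean tautology is in $\L$ and the rules of modus ponens and substitution are available. Hence for any purely propositional formula $\psi$, $\psi$ is classically satisfiable if and only if $\psi$ is $\L$-satisfiable; indeed one direction follows because every $\L$-model restricted to a single state yields a Boolean valuation, and the other because any Boolean-satisfying valuation can be read off as a one-point model, which lies in $\M$ (using consistency of $\L$ and the polysize model property applied to a consistent propositional formula; if $\M$ does not already contain such a singleton, one instead notes that any satisfying Boolean valuation of $\psi$ extends to a satisfying $\L$-model by the standard canonical-model/filtration argument guaranteed by completeness of $\L$ over $\M$, which is implicit in the polysize model property). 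This gives a polynomial-time reduction from SAT to $\L$-SAT.

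The main obstacle is the bookkeeping in the hardness direction: one must argue carefully that propositional satisfiability coincides with $\L$-satisfiability for propositional formulas, which uses consistency of $\L$ together with the fact that $\M$ is nonempty (otherwise no formula would be satisfiable and $\L$ would be inconsistent). Once that is in place, combining the NP-algorithm above with SAT-hardness yields NP-completeness of the satisfiability problem for $\L$.
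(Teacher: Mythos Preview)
Your proposal is correct and is exactly the standard textbook argument; the paper itself gives no proof at all but simply cites Lemma 6.35 of Blackburn--de Rijke--Venema, whose content is precisely what you have written out.

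One stylistic remark: your hardness paragraph is more convoluted than necessary. The clean route is to use closure under uniform substitution. If $\L$ contained a purely propositional non-tautology $\chi$, then substituting $\top$ or $\bot$ for each variable according to a falsifying valuation would yield a closed Boolean contradiction in $\L$, contradicting consistency. Hence for propositional $\psi$ one has $\neg\psi\in\L$ iff $\neg\psi$ is a classical tautology iff $\psi$ is classically unsatisfiable; and the polysize model property (which by definition turns $\L$-consistency into satisfiability in $\M$) does the rest. There is no need to worry about whether a one-point frame lies in $\M$, nor to invoke canonical models or filtration.
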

\begin{proof}
See Lemma 6.35 in \cite{modal}.
\end{proof}
\begin{theorem}
If $\F$ is a class of frames definable by a first order sentence, 
then the problem of deciding whether $F$ belongs to $\F$ is decidable in time polynomial in the size of $F$.
\end{theorem}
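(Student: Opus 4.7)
The plan is to invoke the standard result from finite model theory that model checking for a fixed first-order sentence on a finite structure runs in polynomial time, the polynomial's degree depending only on the sentence. Concretely, fix the defining first-order sentence $\varphi$ for $\F$ and let $k$ be its quantifier rank (equivalently, one may use the number of distinct variables). The claim is that $F \models \varphi$ can be decided in time $O(|F|^{k}\cdot c(\varphi))$, where $c(\varphi)$ is a constant depending only on $\varphi$.

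First I would set up the usual recursive model-checking procedure $\mathrm{Check}(\psi,\bar a)$, which takes a subformula $\psi$ of $\varphi$ and an assignment $\bar a$ of elements of $F$ to the free variables of $\psi$, and returns the truth value of $\psi$ under $\bar a$ in $F$. The recursion is by structural induction on $\psi$:
\begin{enumerate}
\item If $\psi$ is atomic, i.e.\ of the form $S_{ij}(x,y)$, $x=y$, or similar, look the answer up directly in the presentation of $F$; this costs $O(1)$ (or $O(\log |F|)$ with a reasonable encoding).
\item If $\psi$ is a Boolean combination, recurse on the immediate subformulas and combine in constant time.
\item If $\psi = \exists x\,\chi(x,\bar y)$, iterate through the (at most $|F|$) elements $b$ of $F$, calling $\mathrm{Check}(\chi, b\bar a)$; universal quantifiers are handled dually.
\end{enumerate}

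Unfolding the recursion, each of the at most $k$ quantifier alternations contributes a factor of at most $|F|$, while the Boolean and atomic cases contribute only a factor depending on $\varphi$. Hence the total running time is $O(|F|^k)$ with the hidden constant determined by $\varphi$ alone. Since $\varphi$ is fixed in advance (it defines the class $\F$), this is polynomial in $|F|$, as required.

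There is essentially no obstacle here; the only mild subtlety is to be explicit that the degree of the polynomial may depend on $\varphi$ and is not uniform across different defining sentences. For the application in the paper this is precisely what is needed: once the similarity type and the axioms cutting out our frame class are fixed, membership in the class can be checked in polynomial time, so the hypothesis of the preceding theorem (computability of ``$M\in\M$'' in polynomial time) is met, and combined with the polysize model property established by the selection method, $NP$-completeness of the satisfiability problem follows.
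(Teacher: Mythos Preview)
Your proposal is correct: this is precisely the standard data-complexity bound for first-order model checking, and your recursive $\mathrm{Check}$ procedure with the $O(|F|^{k})$ analysis is the right argument. The paper itself does not give a proof but simply cites Lemma~6.36 of Blackburn--de~Rijke--Venema \cite{modal}; you have in effect written out the content of that lemma, so your approach agrees with the paper's, only with the details made explicit rather than delegated to the reference.
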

\begin{proof}
See Lemma 6.36 \cite{modal}.
\end{proof}
The same theorem can be stated for models based on elements in $\F$. More precisely,  
replace $\F$ by $\M$ (the class of models based on members of $\F$), and $F$ by $M.$ This is because models are roughly frames with valuations.
We prove our theorem for any submonoid $T\subseteq {}^nn$.
\begin{theorem}
$V_T$ has an NP-complete satisfiability problem.
\end{theorem}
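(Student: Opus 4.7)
The plan is to apply the two lemmas cited immediately before the statement, so it suffices to exhibit a class $\mathcal{M}$ of models for which (a) membership is polynomial-time checkable, and (b) $L_{V_T}$ enjoys the polysize model property. NP-hardness itself is free: the Boolean fragment of $L_{V_T}$ (formulas built without any $s_\tau$) is exactly classical propositional logic, so propositional SAT reduces trivially to $L_{V_T}$-satisfiability.

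Let $\mathcal{F}$ be the class of frames $(W,(S_\tau)_{\tau\in T})$ where $W\subseteq {}^nU$ is a $T$-closed set and $(s,t)\in S_\tau$ iff $t=s\circ\tau$. This class is first-order axiomatizable: finitely many sentences expressing that each $S_\tau$ is a total function together with the relations from a finite presentation of $T$ suffice. Hence by the second cited lemma, the class $\mathcal{M}$ of $L_{V_T}$-models based on $\mathcal{F}$ is recognizable in polynomial time in $|M|$. By the representation theorem proved earlier (every $V_T$-algebra embeds into a product of full set algebras $\wp(^nU)$), $V_T$ is complete with respect to $\mathcal{M}$, so any consistent $\phi$ is satisfiable in some pointed model $(M,w_0)\in\mathcal{M}$.

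The crucial structural observation is that each accessibility relation $S_\tau$ is a \emph{total function}, so the submodel of $M$ generated by $w_0$ has underlying set exactly $\{w_0\circ\tau:\tau\in T\}$, of cardinality at most $|T|\le n^n$. For a fixed $n$, this is a constant, and the standard generated-submodel lemma for normal modal logics (valid because the modalities $s_\tau$ are monotone and functional) ensures $\phi$ remains satisfied at $w_0$ in the restriction. Thus $L_{V_T}$ has the constant-size, hence polysize, model property with respect to $\mathcal{M}$. Applying the first cited lemma yields NP-membership, and combined with NP-hardness above, completes the argument.

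The only point that requires care—and is the natural obstacle—is the generated-submodel step: we must check that restricting to $\{w_0\circ\tau:\tau\in T\}$ does not disturb truth values of subformulas of $\phi$ under the functional accessibility relations. This is where it matters that the frame is literally closed under the monoid action, so that $(w_0\circ\sigma)\circ\tau = w_0\circ(\sigma\tau)$ lies back in the generated set and the relations $S_\tau$ remain total after restriction.
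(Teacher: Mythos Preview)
Your argument is correct, but it follows a different path from the paper's proof. The paper uses the \emph{selection method}: it defines recursively a set $s(\phi,w)$ of worlds needed to evaluate $\phi$ at $w$, with the key clause $s(s_\tau\psi,w)=\{w\}\cup s(\psi,\tau(w))$, and observes that $|s(\phi,w)|\le 1+(\text{number of modalities in }\phi)\le |\phi|+1$. This yields a linear bound in $|\phi|$.

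You instead exploit the fact that the accessibility relations are total functions under a finite monoid action: the point-generated subframe at $w_0$ is the orbit $\{w_0\circ\tau:\tau\in T\}$, which has size at most $|T|\le n^n$, a constant for fixed $n$. Since generated submodels preserve truth and the restricted frame still satisfies the first-order frame conditions, you get a constant-size model property, which is strictly stronger than what the paper proves. Your approach is arguably more conceptual and gives a tighter bound; the paper's selection method is more syntactic and would generalise more readily to settings where the monoid is infinite but formulas mention only finitely many modalities.

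One small imprecision: you describe $\mathcal{F}$ concretely as frames with $W\subseteq{}^nU$, then claim this class is first-order axiomatizable. Strictly speaking, the first-order sentences (functionality plus the monoid relations) axiomatize a larger abstract class of frames, not the concrete set frames. This does not damage the proof, since $V_T$ is sound and complete with respect to that abstract class as well (its equations are Sahlqvist with exactly those first-order correspondents), and the generated submodel remains in it. But it would be cleaner to define $\mathcal{F}$ abstractly from the start.
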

\begin{proof}
By the  two theorems above, it remains  to show that $V_T$ has the polysize model property. We use the {\it selection method.} 
Suppose  $M$ is a model. We define a selection function as follows 
(intuitively, it selects states needed when evaluating a formula in $M$ at $w$): 
$$s(p,w)=\{w\}$$ $$s(\neg\psi,w)=s(\phi,w)$$ $$s(\theta\wedge\psi,w)=s(\theta,w)\cup s(\psi,w)$$
$$s(s_\tau\psi,w)=\{w\}\cup s(\psi,\tau(w)).$$ 
It follows by induction on the complexity of $\phi$ that for all nodes $w$ such that
 $$M,w\Vdash\phi\mbox{ iff }M\upharpoonright s(\phi,w),w\Vdash\phi.$$ 
The new model $M\upharpoonright s(\phi,w)$ has size $|s(\phi,w)|= 1+ \mbox{ the number of modalities in }\phi$. 
This is less than or equal to $|\phi|+1,$ and we done.
\end{proof}

\end{document}